\documentclass[12pt]{article}
\usepackage[margin=1in]{geometry}

\usepackage{tikz-cd}
\usepackage{tikz}
\usepackage{adjustbox}
\usepackage[algo2e,ruled,vlined]{algorithm2e}
\usepackage[normalem]{ulem}
\usepackage{multirow}
\usepackage{amsmath}
\usepackage{amssymb}
\usepackage{amsthm}
\usepackage{authblk}
\usepackage{graphicx}
\usepackage{fullpage}
\usepackage{subcaption}
\usepackage{hyperref}

\long\def\remove#1{}

\DeclareMathOperator{\inv}{\sf Inv}

\DeclareMathOperator{\cl}{\sf cl}
\DeclareMathOperator{\mo}{\sf mo}
\DeclareMathOperator{\pf}{\sf pf}


\newcommand{\cV}{\mathcal{V}}

\definecolor{colorblind-yellow}{RGB}{221,170,51}
\definecolor{colorblind-red}{RGB}{187,85,102}
\definecolor{colorblind-blue}{RGB}{0,68,136}
\definecolor{dark-gray}{RGB}{64,64,64}
\definecolor{medium-gray}{RGB}{114,114,114}
\definecolor{light-gray}{RGB}{190,190,190}

\newtheorem{theorem}{\sffamily Theorem}
\newtheorem{lemma}[theorem]{\sffamily Lemma}

\newtheorem{proposition}[theorem]{\sffamily Proposition}
\newtheorem{definition}[theorem]{\sffamily Definition}


\newif\ifpaper
\papertrue

\title{Tracking Dynamical Features via Continuation and Persistence} 

\begin{document}

\author[1]{Tamal K. Dey\thanks{tamaldey@purdue.edu} }

\author[2]{Micha\l{} Lipi\'nski\thanks{michal.lipinski@uj.edu.pl}}

\author[2]{Marian Mrozek\thanks{marian.mrozek@uj.edu.pl}}

\author[1]{Ryan Slechta\thanks{rslechta@purdue.edu}}

\affil[1]{Department of Computer Science,
Purdue University, West Lafayette, USA}
\affil[2]{Division of Computational Mathematics, Faculty of Mathematics and Computer Science, Jagiellonian University, Krak\'{o}w, Poland}
\date{}

\maketitle

\setcounter{page}{1}
\begin{abstract}
Multivector fields and combinatorial dynamical systems have recently become a subject of interest due to their potential for use in computational methods. In this paper, we develop a method to track an isolated invariant set---a salient feature of a combinatorial dynamical system---across a sequence of multivector fields. This goal is attained by placing the classical notion of the ``continuation'' of an isolated invariant set in the combinatorial setting. In particular, we give a ``Tracking Protocol'' that, when given a seed isolated invariant set, finds a canonical continuation of the seed across a sequence of multivector fields. In cases where it is not possible to continue, we show how to use zigzag persistence to track homological features associated with the isolated invariant sets. This construction permits viewing continuation as a special case of persistence. 
\end{abstract}

\section{Introduction}
\label{sec:intro}

Dynamical systems enter the field of data science in two ways: either directly, as in the case of dynamic data, 
or indirectly, as in the case of images, where gradient dynamics are useful.
Forman's discrete Morse theory \cite{Forman1998b,Forman1998a,K2015} combines topology with gradient dynamics via \emph{combinatorial vector fields}.
Discrete Morse theory has been used to simplify datasets and to extract topological features from them \cite{AKLM2019,DW22,KKM2005,RWS2011}. When coupled with persistent homology \cite{DW22,EH2010,ELZ02},
this theory can be useful 
for analyzing complex data~\cite{GRHW2011,KW21,landi2021}.

Conley theory \cite{Co78} is a generalization of classical Morse theory beyond gradient dynamics.
Conley's approach to dynamical systems is motivated by the observation that in many areas, perhaps most notably biology, the differential equations 
governing systems of interest are known only roughly. Generally, this is due to the presence of several parameters which cannot be measured or estimated precisely. 
A similar situation occurs in data science, where a time series dataset that is collected from a dynamical process only crudely approximates the underlying system.  This observation has motivated recent studies \cite{BKMW2020,DMS2021,KMW2016,LKMW2020,Mr2017,MW2021} on a variant of Conley theory for combinatorial  vector fields. 

The primary objects of interest in Conley theory are \emph{isolated invariant sets}, each
of which is a salient feature of a vector field, together with an associated homological invariant called the \emph{Conley index} (see Section \ref{sec:prelim}). 
Notably, isolated invariant sets with non-trivial Conley index persist under small perturbations. The geometry of the isolated invariant set may change, 
and even the topology may change, but the Conley index associated with the isolated invariant set remains the same. The isolated invariant set cannot suddenly vanish or change from an attractor to a repeller or vice versa.
From this observation, we get the notion of the \emph{continuation} of an isolated invariant set in a dynamical system to another one in a nearby system. 
This local idea becomes global by making the continuation relation transitive.  

Given a path in the space of dynamical systems, one can track an invariant set along the path so long as the invariant set remains isolated.
When the isolation is lost, continuation ``breaks'' and the Conley index is not well-defined. Typically, this may be observed when two isolated invariant sets merge. 
Isolation may eventually be regained, but there is no guarantee that the Conley index will be recovered. We propose to use persistence \cite{DW22,EH2010,ELZ02} in the discrete setting to connect continuations. First, we show how continuation can be detected and maintained algorithmically in a combinatorial multivector field, which is a discretized version of a continuous
vector field. In fact, the continuation itself may be viewed as a special case of persistence where all bars persist for the duration of the continuation. When continuation ``breaks,'' we observe the birth and death of homology classes.

The combination of continuation and persistence allows us to algorithmically track an isolated invariant set and its associated Conley index in the setting of combinatorial dynamical systems.
Recall that a combinatorial dynamical system is generated by a multivector field. A multivector field is a partition of a simplicial complex into sets that are convex with respect to the face poset. We track an isolated invariant set in a sequence of such fields
where each field differs from its adjacent ones by an \emph{atomic rearrangement}. Each \emph{atomic rearrangement} is either an \emph{atomic coarsening} or an \emph{atomic refinement}. We show that an atomic refinement always permits continuation and thus the
Conley index of the tracked invariant set persists. In the case of coarsening, we may not
be able to continue. In such a case, we select an isolated invariant set that is a minimal perturbation of the previous one and compute the persistence of the Conley index between them. Hence, while there may come a point where we can no longer track an isolated invariant set, we can use persistence to track the lifetime of the homological features that are associated with the isolated invariant set. 

\begin{figure}[htbp]
\begin{center}
  \begin{tabular}{c}
  \includegraphics[width=1.0\textwidth]{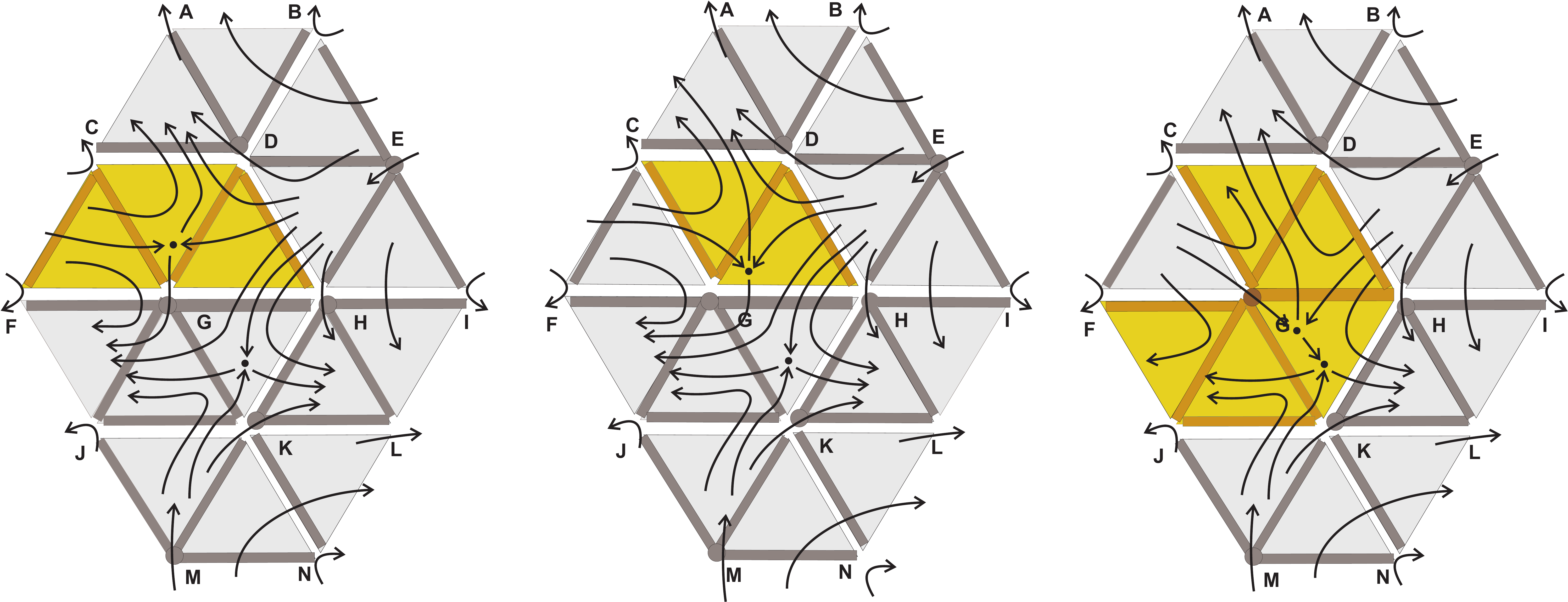}\\
  \begin{tikzpicture}[outer sep = 0, inner sep = 0]
  \draw[fill=light-gray,outer sep=0, inner sep=0] (-9.25,-0.3) -- (4.45,-0.3) -- (4.45,0.1) -- (-9.25,0.1) -- cycle;
  \node[align=left,color=black] at (-7.95,-0.1) {Dimension: 1};
  \draw[fill=light-gray,outer sep=0, inner sep=0] (0.5,-0.8) -- (4.45,-0.8) -- (4.45,-0.4) -- (0.5,-0.4) -- cycle;
  \node[align=left,color=black] at (1.8,-0.6) {Dimension: 1};
  \end{tikzpicture}\\

  \includegraphics[width=1.0\textwidth]{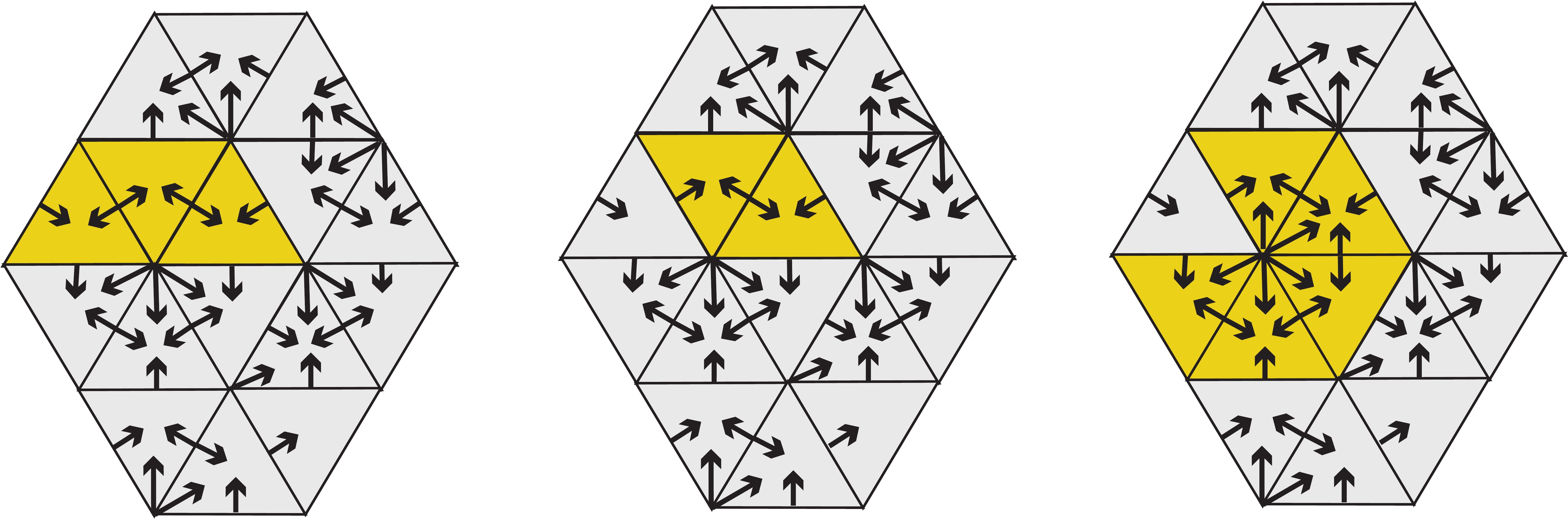}
\end{tabular}
\end{center}
  \caption{(Top) Three multivector fields, corresponding to merging saddles, where the middle multivector field is an atomic refinement of the left and the right multivector field is an atomic coarsening of the middle. The persistence barcode associated with the isolated invariant sets---depicted in yellow---is shown in gray below the three figures. 
 (Bottom)  The multivector fields associated with the figure at the top 
 using the standard multivector drawing convention.}
  \label{fig:Merging-saddles}
\end{figure}

The top row of Figure~\ref{fig:Merging-saddles} presents flow lines from three flows on a simplicial complex with vertices marked from A to N.
The same figure also shows three combinatorial multivector fields represented as three different partitions of the collection 
of cells into multivectors. 
Each multivector is depicted as a connected component, and it is easy to see that they
are convex with respect to the face poset. The multivector fields are constructed as follows: if the flow transversely crosses an edge $e$ into a triangle $t$, then $e$ and $t$ are put in the same multivector. Else, $e$ is put into the same multivector as both of its incident triangles. If the flow line originating at a vertex $v$ immediately enters triangle $t$, then $v$ and $t$ are put into the same multivector. See \cite{MSTW21,MW2021} for additional information on this construction. 

There are two saddle stationary points in each flow, indicated by small black dots. For all three flows, the lower saddle is located in triangle
$GHK$. However, the upper one moves from triangle $CDG$ in the left flow, through triangle $DGH$ in the middle flow and finally it shares
triangle $GHK$ with the lower saddle in the right flow. On the combinatorial level, the upper saddle in the left flow is represented by an isolated invariant set $S_1$ consisting of one
multivector 
$\{CFG,CDG,DGH,CF,CG,DG,DH\}$
marked in yellow. The Conley index of $S_1$ is non-trivial only in dimension one
and has exactly one generator. 
Using methods from this paper, $S_1$ can be tracked to an isolated invariant set $S_2$
containing the upper saddle of the middle flow and consisting of one multivector $\{CDG,DGH,CG,DG,DH\}$, also marked in yellow.
The isolated invariant set $S_3$ containing  the upper saddle of the right flow consists of one multivector $\{CDG,DGH,FGJ,GJK,GHK,CG,DG,DH,FG,JK,GH,GJ,GK\}$, again marked in yellow. It is not a continuation of $S_2$, because in the right
flow the two saddles are too close to one another to be distinguishable with the resolution of the triangulation. Furthermore, the Conley index has changed. It is only nontrivial in dimension one, but unlike $S_1$ and $S_2$, it has two generators. Hence, in the right multivector field, a new generator is born. We show how to capture the birth of this generator using persistence, and we depict the associated barcode beneath the top row of Figure \ref{fig:Merging-saddles}. The familiar reader will note that the Conley index of $S_3$ is the same as that of a monkey saddle. However, because of the finite resolution, we cannot discriminate between two nearby saddles and a monkey saddle. One can view a multivector field as a combinatorial object that represents flows up to the resolution 
permitted by a triangulation. This purely combinatorial view of the top row of Figure \ref{fig:Merging-saddles} is presented in the bottom row. 
In subsequent examples, we use this style. 
\section{Combinatorial Dynamical Systems}
\label{sec:prelim}

In this section, we review multivector fields, combinatorial dynamical systems, and isolated invariant sets. Throughout this paper, $K$ will always denote a finite simplicial complex. Furthermore, we will only consider simplicial homology \cite{hatcher,munkres} with coefficients taken from a finite field. Much of the foundational work on combinatorial dynamical systems was first published in \cite{Mr2017} and subsequently generalized in \cite{LKMW2020}. This work was heavily influenced by Forman's discrete Morse theory \cite{Forman1998b,Forman1998a}.  Combinatorial dynamical systems are constructed via multivector fields, which require a notion of convexity. Given a finite simplicial complex $K$, we let $\leq$ denote the \emph{face relation} on $K$. Formally, if $\sigma,\tau \in K$, then $\sigma \leq \tau$ if and only if $\sigma$ is a face of $\tau$. 
The set $A$ is \emph{convex} if for each pair $\sigma,\tau \in A$ where there exists a $\rho \in K$ satisfying $\sigma \leq \rho \leq \tau$, we have that $\rho \in A$. 

A \emph{multivector} is a convex subset of a simplicial complex. 
A partition of $K$ into multivectors is a \emph{multivector field} on $K$. Multivectors are not required to have a unique maximal element under $\leq$, nor are they required to be connected. Disconnected multivectors do not appear in practice, and in the interest of legibility, all examples that we include in this paper only depict connected multivectors. However, all of our theoretical results do hold for disconnected multivectors.
We draw a multivector $V$ by drawing an arrow from each nonmaximal element $\sigma \in V$ to each maximal element $\tau \in V$ where  $\sigma \leq \tau$. If $\sigma$ is the only element of a multivector, or a \emph{singleton}, then we mark $\sigma$ with a circle. Each $\sigma \in K$ is contained in a unique multivector $V \in \cV$. We denote the unique multivector in $\cV$ containing $\sigma$ as $[\sigma]_{\cV}$. 

A multivector field $\cV$ induces dynamics on $K$. Given a simplex $\sigma \in K$, we denote the \emph{closure} of $\sigma$ as $\cl(\sigma) := \{ \tau \in K \; | \; \tau \leq \sigma \}$. For a set $A \subseteq K$, the closure of $A$ is given by $\cl(A) := \cup_{\sigma \in A} \cl(\sigma)$. A set $A$ is \emph{closed} if and only if $A = \cl(A)$.  The multivector field $\cV$ induces a multivalued map $F_{\cV} \; : \; K \multimap K$ where $F_{\cV}(\sigma) := \cl(\sigma) \cup [\sigma]_{\cV}$. 
Informally, this will mean that if one is at a simplex $\sigma$, then one can move either to a face of $\sigma$ or to a simplex $\tau$ in the same multivector as $\sigma$. We allow moving within any single multivector because, on the level of flows, the behavior within the multivector is beyond the resolution of the given simplicial complex. Conversely, we do not allow moving from a cell to its coface, unless they are in the same multivector, because this does not respect the underlying flow.

The multivalued map $F$ gives a notion of \emph{paths} and \emph{solutions} to $\cV$. We let $\mathbb{Z}_{[i,j]} := \mathbb{Z} \cap [i,j]$. A \emph{path} is a function $\rho \; : \; \mathbb{Z}_{[0,n]} \to K$ where $\rho(i+1) \in F_{\cV}(\rho(i))$ for $i \in \mathbb{Z}_{[0,n-1]}$. 
Likewise, a \emph{solution} is a function $\rho \; : \; \mathbb{Z} \to K$ where $\rho(i+1) \in F_{\cV}(\rho(i))$. 
However, there are several trivial solutions in a multivector field. If $\sigma \in K$, then there is a solution $\rho$ where $\rho(i) = \sigma$ for all $i \in \mathbb{Z}$. 
That is, every simplex is a fixed point. This does not match the intuition from differential equations: only a very select set of simplices should be fixed points under $F_{\cV}$. To enforce this, we use the notion of a \emph{critical multivector}. But first, we define the \emph{mouth} of a set $A$, denoted $\mo(A)$, to be $\mo(A) := \cl(A) \setminus A$.  
The multivector $V$ is \emph{critical} if $H( \cl(A), \mo(A) ) \neq 0$.
Intuitively, critical multivectors with one maximal element correspond to stationary points in the flow setting.
Thus, only simplices in critical multivectors should be fixed points under $F$. \emph{Essential solutions} enforce this requirement \cite{LKMW2020}.
\begin{definition}[Essential Solution]
    Let $\rho \; : \; \mathbb{Z} \to K$ denote a solution to the multivector field $\cV$. 
    If for every $i \in \mathbb{Z}$ where $[ \rho(i) ]_{\cV}$ is not critical, there exists a pair of integers $i^- < i < i^+$ where $[\rho(i^-)]_{\cV} \neq [\rho(i)]_{\cV}$ and $[\rho(i)]_{\cV} \neq [\rho(i^+)]_{\cV}$, then $\rho$ is an \emph{essential solution}.
\end{definition}

\begin{figure}[htbp]
\centering
\begin{tabular}{ccc}
  \includegraphics[width=40mm]{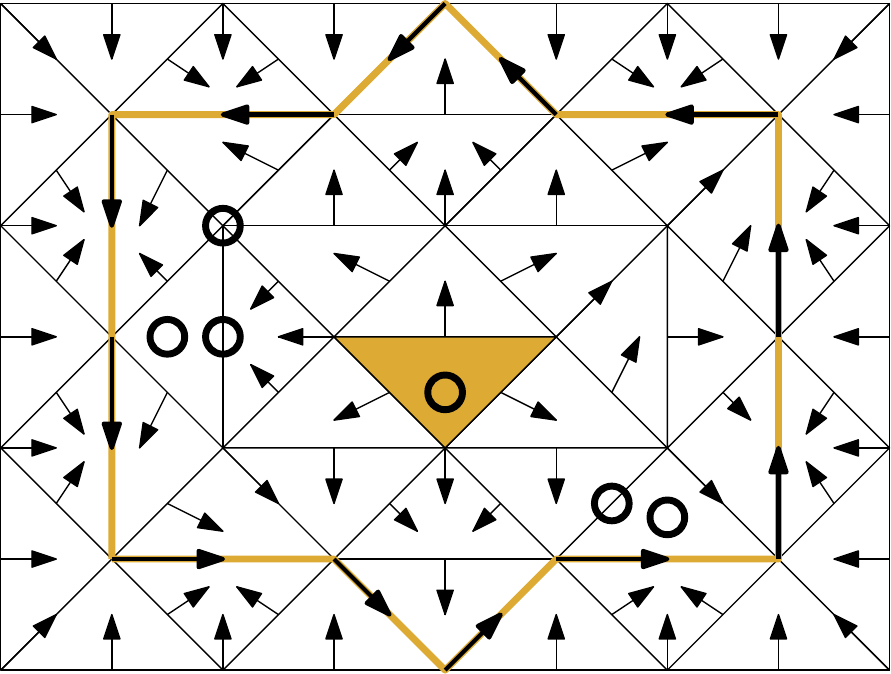}&
  \includegraphics[width=40mm]{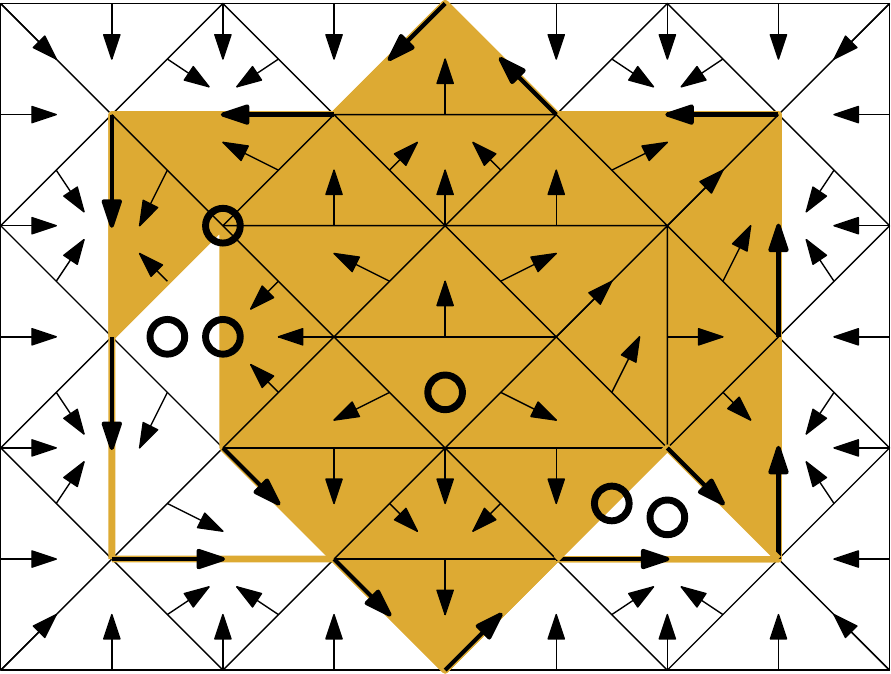}&
  \includegraphics[width=40mm]{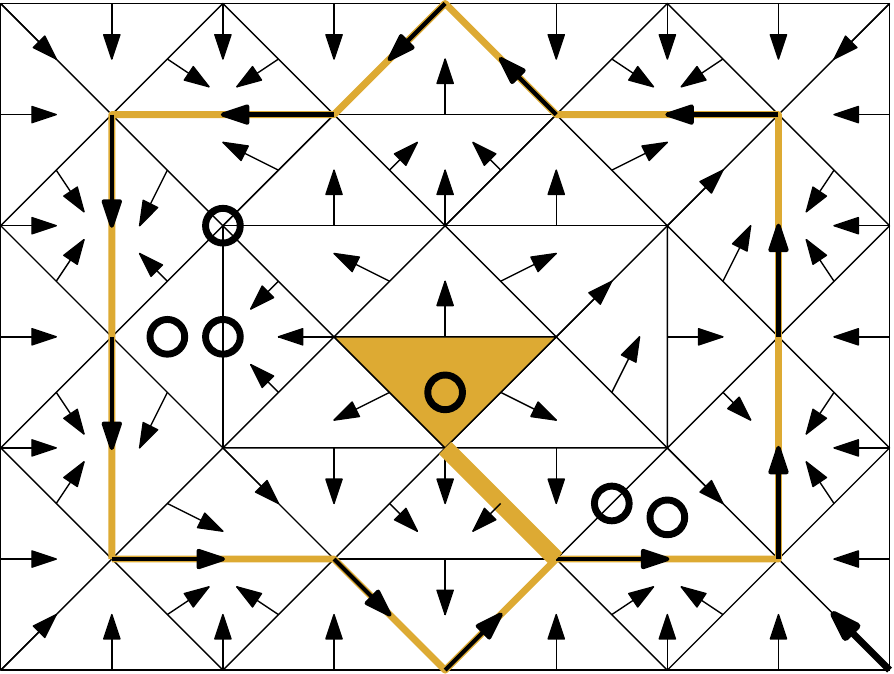}\\
\end{tabular}
\caption{Three examples of an invariant set, marked in yellow. }
\label{fig:invariant-sets}
\end{figure} 

\begin{figure}[htbp]
\centering
\begin{tabular}{ccc}
  \includegraphics[width=40mm]{fig/invariant-set-second-example.pdf}&
    \includegraphics[width=40mm]{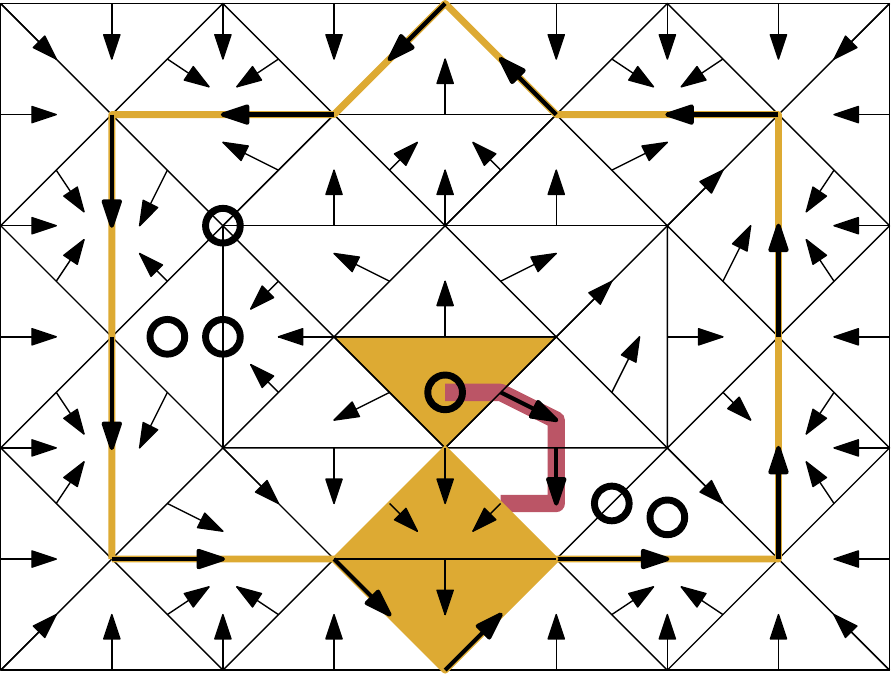}&
    \includegraphics[width=40mm]{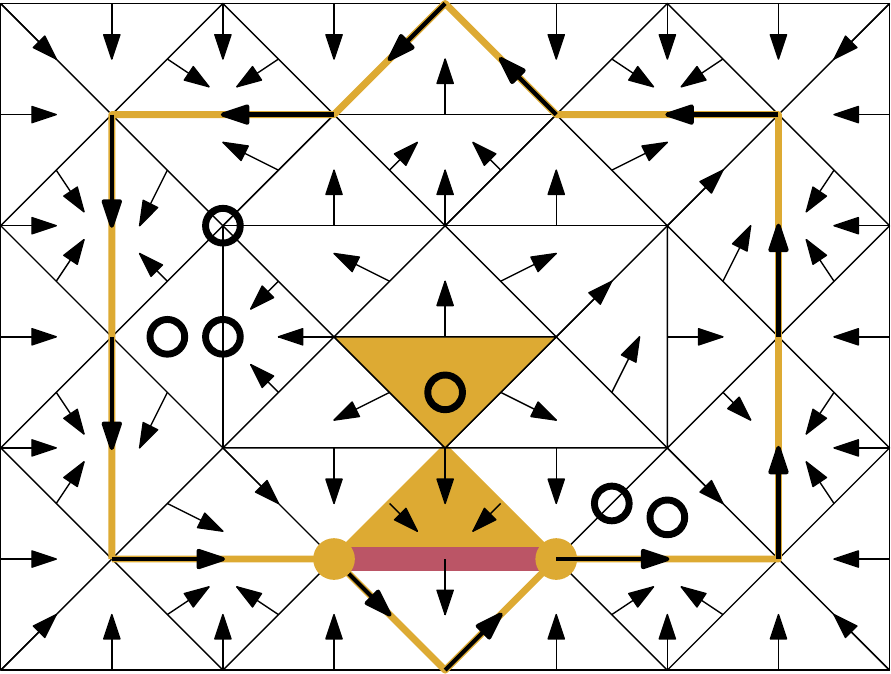}\\
\end{tabular}
\caption{Three invariant sets on the same multivector field, marked in yellow. The invariant set on the left is isolated by the entire rectangle. The invariant set in the middle is isolated by its closure, but not by the rectangle. This is because there is a path, marked in red, that starts in the invariant set, leaves the invariant set, and then re-enters the invariant set, all while staying within the rectangle. The invariant set on the right is isolated by neither its closure nor the rectangle, because there is a path from a yellow triangle, to the red edge, to a yellow vertex.}
\label{fig:isolated-invariant-sets}
\end{figure} 

\begin{figure}[htbp]
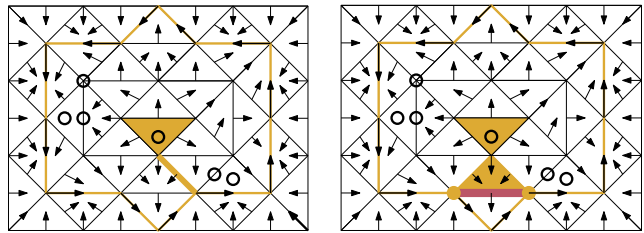

\centering
\begin{tabular}{cc}
  \includegraphics[width=40mm]{fig/invariant-set-fourth-example.pdf}&
    \includegraphics[width=40mm]{fig/invariant-set-fifth-example.pdf}\\
\end{tabular}
\caption{Two invariant sets, marked in yellow, over the same multivector field. On the right, the invariant set includes the two yellow vertices marked with filled discs, but it excludes the red edge. The invariant set on the left is not $\cV$-compatible, while the invariant set on the right is. }
\label{fig:v-compatible-invariant-sets}
\end{figure} 

The \emph{invariant part} of a set $A \subseteq K$, denoted $\inv_{\cV}(A)$, is given by the set of simplices $\sigma \in A$ for which there exists an essential solution $\rho \; : \; \mathbb{Z} \to A$ where $\rho(i) = \sigma$ for some $i \in \mathbb{Z}$. A set $S \subseteq K$ is an \emph{invariant set} if and only if $S = \inv_{\cV}(S)$. For examples of invariant sets, see Figure \ref{fig:invariant-sets}. We are interested in a special type of invariant set. 
\begin{definition}[Isolated Invariant Set, Isolating Set]
Let $S$ be an invariant set under $\cV$. 
If there exists a closed set $N$ such that $F_{\cV}(S)\subseteq N$ and every path $\rho \; : \; \mathbb{Z}_{[0,n]} \to N$ where $\rho(0), \rho(n) \in S$ has the property that $\rho(\mathbb{Z}_{[0,n]}) \subseteq S$, then $S$ is \emph{isolated} by $N$ and $S$ is an \emph{isolated invariant set}.
Moreover, the set $N$ is an \emph{isolating set} for $S$.
\end{definition}
Figure \ref{fig:isolated-invariant-sets} illustrates the concept of isolation. An invariant set $S$ is \emph{$\cV$-compatible} if $S$ is equal to the union of a set of multivectors in $\cV$. For examples, see Figure \ref{fig:v-compatible-invariant-sets}. This gives an equivalent formulation of an isolated invariant set. 
\begin{proposition}\cite[Proposition 4.1.21]{lipinski_phd}
  An invariant set $S$ is isolated if and only if it is convex and $\cV$-compatible.
  \label{prop:isolated_equiv_convex_vcomp}
\end{proposition}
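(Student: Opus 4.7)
The plan is to prove each direction separately, with the backward direction pivoting on the natural isolating set $N := \cl(S)$.

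For the forward direction, suppose $S$ is isolated by some closed set $N$. I first observe that $S \subseteq F_\cV(S) \subseteq N$, since each $\sigma \in S$ lies in $\cl(\sigma) \subseteq F_\cV(\sigma)$. Convexity then follows by a short ``path sandwich'' argument: given $\sigma, \tau \in S$ and $\rho \in K$ with $\sigma \leq \rho \leq \tau$, the sequence $(\tau, \rho, \sigma)$ is a valid path in $N$ (using closedness of $N$ to place $\rho$ in $N$, and the face relation to verify each consecutive pair is in the graph of $F_\cV$) with both endpoints in $S$, so the isolation property forces $\rho \in S$. For $\cV$-compatibility, given $\sigma \in S$ and $\tau \in [\sigma]_\cV$, the length-two path $(\sigma, \tau, \sigma)$ lies in $N$ (since $\tau \in [\sigma]_\cV \subseteq F_\cV(\sigma) \subseteq N$) and returns to $S$, so $\tau \in S$.

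For the backward direction, take $N := \cl(S)$, which is closed by construction. Because $S$ is $\cV$-compatible, every multivector meeting $S$ is contained in $S$, so $F_\cV(S) = \cl(S) \cup \bigcup_{\sigma \in S}[\sigma]_\cV \subseteq \cl(S) = N$. I then argue by contradiction that no path $\rho \; : \; \mathbb{Z}_{[0,n]} \to N$ with $\rho(0), \rho(n) \in S$ can exit $S$. Otherwise, pick a smallest index $j$ after the exit for which $\rho(j) \in S$, so $\rho(j-1) \notin S$. The transition $\rho(j-1) \to \rho(j)$ cannot be a move within a multivector, for then $\cV$-compatibility would give $\rho(j-1) \in [\rho(j)]_\cV \subseteq S$. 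Hence $\rho(j) \leq \rho(j-1)$. Since $\rho(j-1) \in \cl(S)$, there exists $\mu \in S$ with $\rho(j-1) \leq \mu$. Applying convexity to $\rho(j) \leq \rho(j-1) \leq \mu$ with $\rho(j), \mu \in S$ yields $\rho(j-1) \in S$, a contradiction.

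The main obstacle is this last contradiction, where I must simultaneously rule out the two escape mechanisms: the path cannot re-enter $S$ by jumping within a multivector (blocked by $\cV$-compatibility), so it must re-enter through a face, but such a re-entry is then caught between two elements of $S$ and collapsed by convexity. This coordinated use of both hypotheses is what makes $N = \cl(S)$ exactly the right choice---large enough to absorb $F_\cV(S)$, yet small enough that any would-be detour through $\cl(S) \setminus S$ cannot close up.
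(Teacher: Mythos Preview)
Your proof is correct. Note, however, that the paper does not actually supply its own proof of this proposition: it is stated with a citation to \cite[Proposition~4.1.21]{lipinski_phd} and no argument is given in the present paper, so there is nothing here to compare your approach against. Your forward direction uses the expected ``short path'' arguments (the three-term path $(\tau,\rho,\sigma)$ for convexity and $(\sigma,\tau,\sigma)$ for $\cV$-compatibility), and your backward direction correctly identifies $N=\cl(S)$ as the isolating set and handles the re-entry step by splitting on whether the transition is a multivector move or a face move; both cases are dispatched exactly as one would expect from the two hypotheses.
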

\section{Tracking Isolated Invariant Sets}

In this section, we introduce the protocol for tracking an isolated invariant set across multivector fields. Results in the continuous theory imply that under a sufficiently small perturbation, some homological features of an isolated invariant set do not change. Hence, we require a notion of a small perturbation of a multivector field. In particular, let $\cV$ and $\cV'$ denote two multivector fields on $K$. If each multivector $V' \in \cV'$ is contained in a multivector $V \in \cV$, $|\cV \setminus \cV'| = 1$, and $| \cV' \setminus \cV| = 2$, then $\cV'$ is an \emph{atomic refinement} of $\cV$. It is so-called because $\cV'$ is obtained by ``splitting'' exactly one multivector in $\cV$ into two multivectors, while all the other multivectors remain the same. Symmetrically, we say that $\cV$ is an \emph{atomic coarsening} of $\cV'$. More broadly, it is said that $\cV$ and $\cV'$ are \emph{atomic rearrangements} of each other. In Figures \ref{fig:refinement}, \ref{fig:coarsen-within}, \ref{fig:coarsen-outside}, \ref{fig:coarsen-between}, and \ref{fig:persistence-step}, the two multivector fields are atomic rearrangements of each other. In these figures, we draw the multivectors that are splitting or merging in red.

Given an isolated invariant set $S$ under $\cV$, and an atomic rearrangement of $\cV$ denoted $\cV'$, we aim to find an isolated invariant set $S'$ that is a minimal perturbation of $S$. We accomplish this through two mechanisms: \emph{continuation} and \emph{persistence}. When we use continuation, or when we attempt to \emph{continue}, we check if there exists an $S'$ under $\cV'$ that is in some sense the same as $S$. If there is at least one such $S'$, then we choose a canonical one. This is explained in Section \ref{sec:continuation}. If there is no $S'$ to which we can continue, then we use persistence. In particular, we choose a canonical isolated invariant set $S'$ under $\cV'$, and while $S$ does not continue to $S'$, we can use zigzag persistence to observe which features of $S$ are absorbed by $S'$. We elaborate on this scheme in Section \ref{sec:persistence}. To choose $S'$, we require the following result.

\begin{proposition}\cite[Corollary 4.1.22]{lipinski_phd}
    Let $A$ be a convex and $\cV$-compatible set. Then $\inv_\cV(A)$ is an isolated invariant set.
    \label{prop:invariant_part_of_convex_vcomp}
\end{proposition}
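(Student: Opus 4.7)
The strategy is to apply Proposition 4.1.21 to $S := \inv_{\cV}(A)$. Thus it suffices to show three things: $S$ is an invariant set, $S$ is convex, and $S$ is $\cV$-compatible.

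Invariance is nearly formal. For any $\sigma \in S$, some essential solution $\rho : \mathbb{Z} \to A$ passes through $\sigma$; every value $\rho(i)$ itself lies in $S$, witnessed by a time-shift of $\rho$, so $\rho$ can be regarded as an essential solution into $S$, giving $\sigma \in \inv_{\cV}(S)$. The reverse inclusion is immediate from $S \subseteq A$.

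For $\cV$-compatibility, fix $\sigma \in S$ and $\sigma' \in [\sigma]_{\cV}$; note that $\sigma' \in A$ because $A$ is $\cV$-compatible. Given an essential solution $\rho$ with $\rho(0) = \sigma$, I would splice a two-step detour within $[\sigma]_{\cV}$ to form $\rho'$, defined by $\rho'(j) = \rho(j)$ for $j \leq 0$, $\rho'(1) = \sigma'$, $\rho'(2) = \sigma$, and $\rho'(j) = \rho(j-2)$ for $j \geq 3$. The inserted transitions are valid because $\sigma, \sigma' \in [\sigma]_{\cV} \subseteq F_{\cV}(\sigma) \cap F_{\cV}(\sigma')$. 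For convexity, take $\sigma, \tau \in S$ with $\sigma \leq \rho \leq \tau$; convexity of $A$ places $\rho \in A$. Given essential solutions $\rho_{\sigma}, \rho_{\tau}$ through $\sigma$ and $\tau$ with images in $A$, splice them via the three-term face chain $\tau \to \rho \to \sigma$, which is a valid path because $\rho \in \cl(\tau) \subseteq F_{\cV}(\tau)$ and $\sigma \in \cl(\rho) \subseteq F_{\cV}(\rho)$.

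The main obstacle is verifying essentiality of the spliced solutions. Essentiality is a pointwise condition: at every index $j$ where $[\rho'(j)]_{\cV}$ is non-critical, one must exhibit witnesses $j^- < j < j^+$ whose values lie in other multivectors. At the inserted positions the witnesses come from essentiality of the underlying $\rho$, $\rho_{\sigma}$, or $\rho_{\tau}$ at the corresponding original positions, up to an index shift. The delicate subcase is when the multivector at an inserted position coincides with the multivectors of every nearby original value; here one must reach further out in the original essential solution for a witness, which is guaranteed to exist since the original solution is essential and that multivector is non-critical. This bookkeeping—routine but careful—completes the verification, and Proposition 4.1.21 then gives that $\inv_{\cV}(A)$ is an isolated invariant set.
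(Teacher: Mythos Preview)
The paper does not supply its own proof of this proposition; it is simply imported as \cite[Corollary 4.1.22]{lipinski_phd}. So there is nothing to compare against on the paper's side.

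Your argument is the natural one and is correct. Reducing to Proposition~\ref{prop:isolated_equiv_convex_vcomp} is exactly the right move, and the three ingredients you verify (invariance, $\cV$-compatibility, convexity of $\inv_{\cV}(A)$) are the right ones. The splicing constructions are valid: in the $\cV$-compatibility case the two inserted steps stay inside a single multivector and hence lie in $F_{\cV}$ of each other; in the convexity case the descending chain $\tau \to \mu \to \sigma$ (I would rename your intermediate simplex, since you also use $\rho$ for solutions) uses only closure moves. The essentiality bookkeeping you sketch also goes through: at any spliced position whose multivector coincides with that of an adjacent original endpoint, essentiality of the original solution at that endpoint furnishes the required witnesses after the obvious index shift, and when the multivectors differ the adjacent original endpoint is already a witness. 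The only cosmetic issue is the overloaded use of $\rho$; otherwise the proof stands.
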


The set $S$ is an isolated invariant set by assumption, so Proposition \ref{prop:isolated_equiv_convex_vcomp} implies that $S$ is convex and $\cV$-compatible. Thus, if $S$ is also $\cV'$-compatible, a natural choice is then to use Proposition \ref{prop:invariant_part_of_convex_vcomp} and take $S' := \inv_{\cV'}(S)$. However, if $S$ is not $\cV'$-compatible, then the situation is more complicated. The set $S$ is not $\cV'$-compatible precisely when $\cV'$ is an atomic coarsening of $\cV$, and the unique multivector $V \in \cV' \setminus \cV$, occasionally called the \emph{merged multivector}, has the properties that $V \cap S \neq \emptyset$ and $V \not\subseteq S$. In such a case, we use the notation $\langle S \cup V \rangle_{\cV'}$ to denote the intersection of all $\cV'$-compatible and convex sets that contain $S \cup V$. The simplicial complex $K$ is $\cV'$-compatible and convex, so $\langle S \cup V \rangle_{\cV'}$ always exists and $S \subsetneq \langle S \cup V \rangle_{\cV'}$. 
We observe that $\langle S \cup V \rangle_{\cV'}$ is $\cV'$-compatible and convex, and thus it is the minimal convex and $\cV'$-compatible set that contains $S$. 
In such a case, we use Proposition \ref{prop:invariant_part_of_convex_vcomp} and take $S' := \inv_{\cV'}( \langle S \cup V \rangle_{\cV'}$ ). These principles are enumerated in the following Tracking Protocol.\\

\noindent
{\bf Tracking Protocol}\\
Given a nonempty isolated invariant set $S$ under $\cV$, and an atomic rearrangement of $\cV$ denoted $\cV'$, use the following rules to find an isolated invariant set $S'$ under $\cV'$ that corresponds to $S$.
\begin{enumerate}
    \item{Attempt to track via continuation: \label{protocol-step:continuation} }
    \begin{enumerate}
        \item If $\cV'$ is an atomic refinement of $\cV$, then take $S' := \inv_{\cV'}(S)$. \label{protocol-step:refinement}
        \item If $\cV'$ is an atomic coarsening of $\cV$, and the unique merged multivector $V$ has the property that $V \subseteq S$, then take $S' := \inv_{\cV'}(S)$. \label{protocol-step:coarsen-within}
        \item If $\cV'$ is an atomic coarsening of $\cV$, and the unique merged multivector $V$ has the property that $V \cap S = \emptyset$, then take $S' := \inv_{\cV'}(S) = S$.\label{protocol-step:coarsen-outside}
        \item If $\cV'$ is an atomic coarsening of $\cV$ and the unique merged multivector $V$ satisfies the formulae $V \cap S \neq \emptyset$ and $V \not\subseteq S$, then consider $A = \langle S \cup V \rangle_{\cV'}$. If $\inv_{\cV}(A) = S$, then take $S' := \inv_{\cV'}(A)$. \label{protocol-step:coarsen-between}
        \item Else, it is impossible to track via continuation. \label{protocol-step:continuation-impossible}
    \end{enumerate}
    \item {If it is impossible to track via continuation, then attempt to track via persistence:} \label{protocol-step:general-persistence}
    \begin{enumerate}
        \setcounter{enumii}{5}
        \item If $A := \langle S \cup V \rangle_{\cV}$, then take $S' := \inv_{\cV'}(A)$. If $S$ and $S'$ are isolated by a common isolating set, then use the technique in Equation \ref{eqn:general-cont-filtration} to find a zigzag filtration connecting them. \label{protocol-step:persistence} 
        \item Otherwise, there is no natural choice of $S'$. See Appendix \ref{sec:strategy} for a possible strategy. \label{protocol-step:impossible}
    \end{enumerate}
\end{enumerate}

We include an example of Step \ref{protocol-step:refinement} in Figure \ref{fig:refinement}, Step \ref{protocol-step:coarsen-within} in Figure \ref{fig:coarsen-within}, Step \ref{protocol-step:coarsen-outside} in Figure \ref{fig:coarsen-outside}, Step \ref{protocol-step:coarsen-between} in Figure \ref{fig:coarsen-between}, and Step \ref{protocol-step:persistence} in Figure \ref{fig:persistence-step}. Each figure depicts a multivector field and a seed isolated invariant set on the left, and an atomic rearrangement and the resulting isolated invariant set on the right. By iteratively applying this protocol (until $S' = \emptyset$, in which case we are done), we can track how an isolated invariant set changes across several atomic rearrangements. See Figure \ref{fig:iterative-tracking} and the associated barcode in Figure \ref{fig:iterative-tracking-barcode}. The following proposition shows that any two multivector fields $\cV_1$ and $\cV_2$ can be related by a sequence of atomic rearrangements, and hence the Tracking Protocol can be used to track how an isolated invariant set changes across an arbitrary sequence of multivector fields.

\begin{proposition}
For each pair of multivector fields $\cV$ and $\cV'$ over $K$, there exists a sequence $\cV = \cV_1$, $\cV_2$, \ldots, $\cV_n = \cV'$ where each each $\cV_i$ is an atomic rearrangement of $\cV_{i-1}$ for $i > 1$. 
\end{proposition}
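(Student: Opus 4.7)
The plan is to reduce both $\cV$ and $\cV'$ to a common ``finest'' multivector field, namely the singleton multivector field $\cV_\bot := \{\{\sigma\} : \sigma \in K\}$, whose parts are trivially convex. If I can show that any multivector field can be refined down to $\cV_\bot$ by a sequence of atomic refinements, then the desired sequence from $\cV$ to $\cV'$ is obtained by concatenating the refinement sequence $\cV \to \cV_\bot$ with the reverse of the refinement sequence $\cV' \to \cV_\bot$, where the latter becomes a sequence of atomic coarsenings.

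The key lemma to establish is: \emph{every multivector $V$ with $|V| \geq 2$ admits a partition into two nonempty convex subsets $V_1, V_2$.} I would prove this by choosing a $\leq$-maximal element $\sigma$ of $V$ (which exists because $K$, and hence $V$, is finite) and setting $V_1 := V \setminus \{\sigma\}$, $V_2 := \{\sigma\}$. Clearly $V_2$ is convex. To check that $V_1$ is convex, take $\tau_1, \tau_2 \in V_1$ and $\rho \in K$ with $\tau_1 \leq \rho \leq \tau_2$. Convexity of $V$ yields $\rho \in V$. If $\rho = \sigma$, then $\sigma \leq \tau_2 \in V$, contradicting the maximality of $\sigma$ in $V$ since $\tau_2 \neq \sigma$. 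Hence $\rho \in V_1$.

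Given this lemma, I induct on the complexity measure $\mu(\cV) := \sum_{V \in \cV}(|V| - 1) = |K| - |\cV|$. If $\mu(\cV) = 0$, then $\cV = \cV_\bot$ and we are done. Otherwise, some $V \in \cV$ has $|V| \geq 2$; applying the lemma and replacing $V$ in $\cV$ by $V_1, V_2$ yields a multivector field $\cV''$ that is an atomic refinement of $\cV$ with $\mu(\cV'') = \mu(\cV) - 1$. By induction, $\cV''$ refines to $\cV_\bot$, so $\cV$ does as well. Finally, I assemble the proof: applying this to $\cV$ gives a chain $\cV = \cV_1, \ldots, \cV_r = \cV_\bot$ of atomic refinements, and applying it to $\cV'$ gives $\cV' = \cV'_1, \ldots, \cV'_s = \cV_\bot$; concatenating the first chain with the reversal of the second yields the required sequence $\cV = \cV_1, \ldots, \cV_r = \cV'_s, \cV'_{s-1}, \ldots, \cV'_1 = \cV'$, noting that reversing an atomic refinement is an atomic coarsening and so each consecutive pair is an atomic rearrangement.

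I expect the main (and only real) obstacle to be the convexity verification in the key lemma; everything else is bookkeeping. One alternative that I would avoid is trying to go directly from $\cV$ to $\cV'$ by local merges and splits without passing through $\cV_\bot$, since ensuring convexity of all intermediate multivector fields in that more direct approach seems considerably more delicate.
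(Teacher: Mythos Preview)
Your proposal is correct and follows essentially the same approach as the paper: both reduce to the singleton multivector field by repeatedly splitting off a $\leq$-maximal element from some non-singleton multivector, then concatenate the two refinement chains (reversing one into coarsenings). Your version is slightly more careful in that you explicitly verify the convexity of $V\setminus\{\sigma\}$ and organize the argument as a formal induction on $|K|-|\cV|$, whereas the paper simply asserts the iteration terminates; but the underlying idea is identical.
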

\begin{proof}
    Let $\mathcal{W}$ denote the multivector field over $K$ where each simplex $\sigma \in K$ is contained in its own multivector. That is, $\mathcal{W} := \{ \{ \sigma \} \; | \; \sigma \in K \}$. We will show that there exists a sequence $\cV = \cV_1, \cV_2, \ldots, \cV_n = \mathcal{W}$ and a sequence $\cV' = \cV'_1, \cV'_2, \ldots, \cV'_m = \mathcal{W}$ where each $\cV_{i+1}$ (or $\cV'_{i+1}$) is an atomic refinement of $\cV_i$ (or $\cV'_{i}$). This immediately gives a sequence $\cV = \cV_1, \cV_2, \ldots, \cV_n = \mathcal{W} = \cV'_{m}, \ldots, \cV'_{1} = \cV'$. Note that because the reverse of an atomic refinement is an atomic coarsening, this sequence of multivectors is a sequence of $n-1$ consecutive atomic refinements followed by $m-1$ atomic coarsenings. 
    
    Hence, all that is necessary is to find a sequence $\cV = \cV_1, \cV_2, \ldots, \cV_n = \mathcal{W}$ and $\cV' = \cV'_1, \cV'_2, \ldots, \cV'_m = \mathcal{W}$. Without loss of generality, we consider the former. Consider an arbitrary multivector $V \in \cV$ where $|V| > 1$ (if there is no such $V$, then $\cV = \mathcal{W}$ and we are done). Take a maximal element $\sigma \in V$ (with respect to $\leq$), and let $\cV_1 := (\cV \setminus \{V\}) \cup \{ \{ \sigma \}\} \cup \{V \setminus \{\sigma\}\}$. Effectively, we have partitioned the multivector $V$ into one multivector that only contains $\sigma$ and the remainder of the multivector. If we iteratively repeat this process, then we must arrive at $\mathcal{W}$ in a finite number of steps, because each step creates a new multivector consisting of a single simplex, and multivectors are never merged. Furthermore, note that each time we split a multivector constitutes an atomic refinement. Hence, by iterating this process, we obtain a sequence $\cV = \cV_1, \cV_2, \ldots, \cV_n = \mathcal{W}$ where each multivector field is related to its predecessor by atomic refinement. Thus, we can combine these sequences as previously described, and we are done. 
\end{proof}

\begin{figure}[htbp]
\centering
\begin{tabular}{cc}
  \includegraphics[height=40mm]{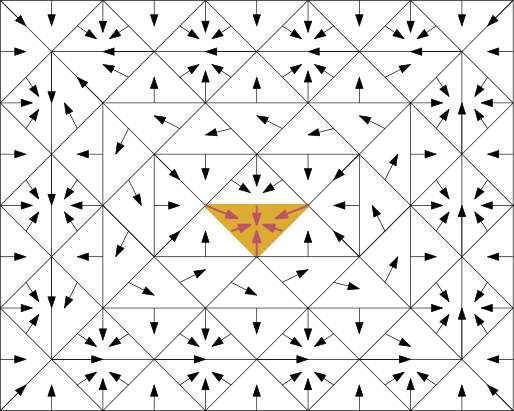}&
  \includegraphics[height=40mm]{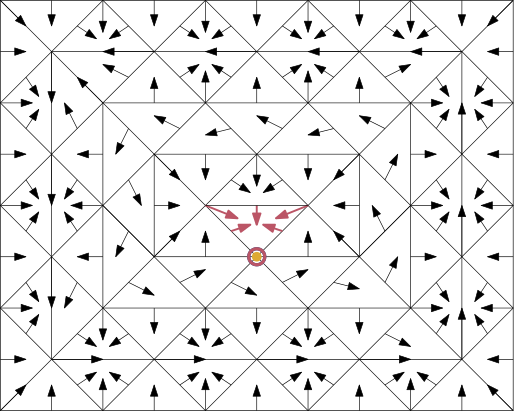}\\
\end{tabular}
\caption{Applying Step \ref{protocol-step:refinement} to an invariant set (yellow, left) to get a new one (yellow, right). }
\label{fig:refinement}
\end{figure} 

\begin{figure}[htbp]
\centering
\begin{tabular}{cc}
  \includegraphics[height=40mm]{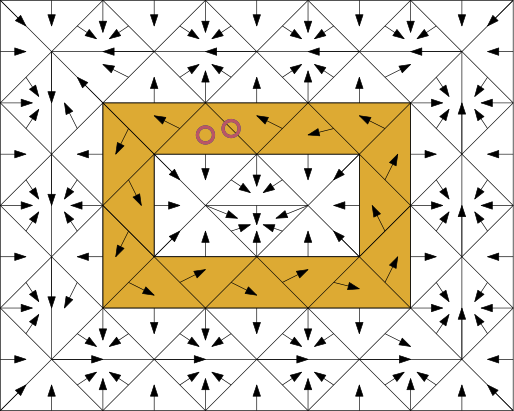}&
  \includegraphics[height=40mm]{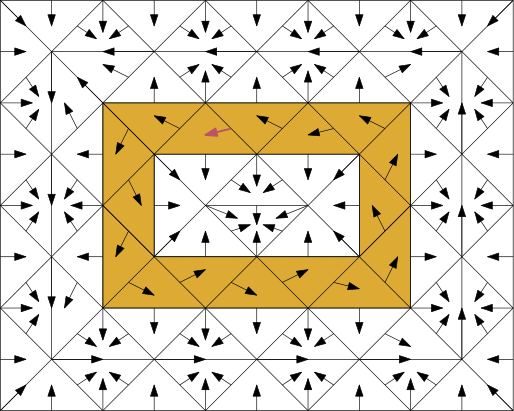}\\
\end{tabular}
\caption{Applying Step \ref{protocol-step:coarsen-within} to an invariant set (yellow, left) to get a new one (yellow, right). }
\label{fig:coarsen-within}
\end{figure} 

\begin{figure}[htbp]
\centering
\begin{tabular}{cc}
  \includegraphics[height=40mm]{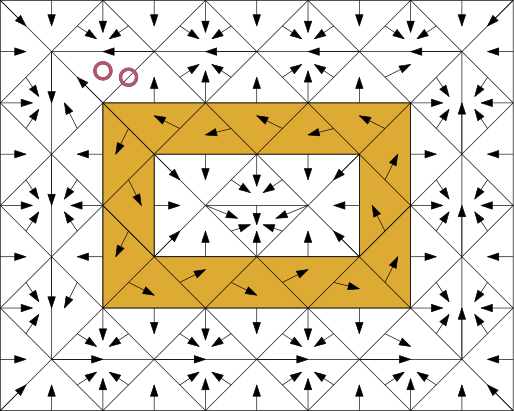}&
  \includegraphics[height=40mm]{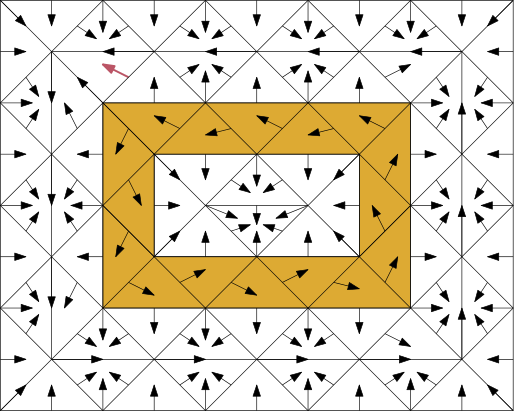}\\
\end{tabular}
\caption{Applying Step \ref{protocol-step:coarsen-outside} to an invariant set (yellow, left) to get a new one (yellow, right). The merged vector is outside of the invariant set on the left, so the invariant sets are the same.}
\label{fig:coarsen-outside}
\end{figure} 

\begin{figure}[htbp]
\centering
\begin{tabular}{cc}
  \includegraphics[height=40mm]{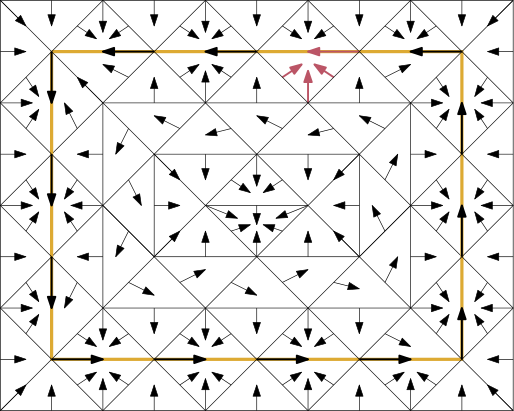}&
  \includegraphics[height=40mm]{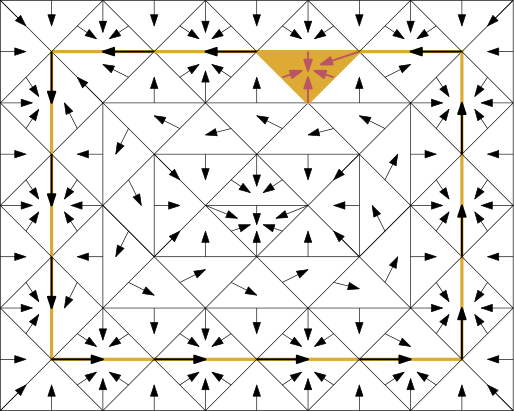}\\
\end{tabular}
\caption{Applying Step \ref{protocol-step:coarsen-between} to an invariant set (yellow, left) to get a new one (yellow, right). }
\label{fig:coarsen-between}
\end{figure} 

\begin{figure}[htbp]
\centering
\begin{tabular}{cc}
  \includegraphics[height=40mm]{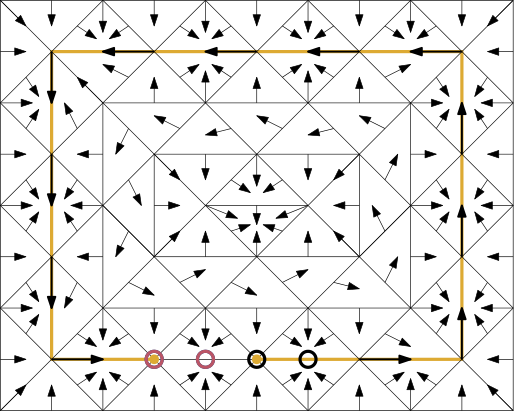}&
  \includegraphics[height=40mm]{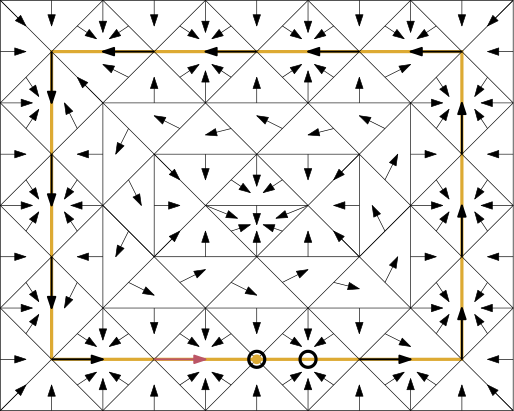}\\
  \multicolumn{2} {l} {
  \begin{tikzpicture}[outer sep = 0, inner sep = 0]
  \draw[fill=white,outer sep=0, inner sep=0] (0.0,-0.3) -- (10.47,-0.3) -- (10.47,0.1) -- (0.0,0.1) -- cycle;
  \node[align=left,color=black] at (1.3,-0.1) {Dimension: 0};
  \end{tikzpicture} }\\
  \multicolumn{2} {r} {
  \begin{tikzpicture}[outer sep = 0, inner sep = 0]
  \draw[fill=light-gray,outer sep=0, inner sep=0] (0.0,-0.3) -- (5.235,-0.3) -- (5.235,0.1) -- (0.0,0.1) -- cycle;
  \node[align=left,color=black] at (1.3,-0.1) {Dimension: 1};
  \end{tikzpicture} }
\end{tabular}
\caption{Applying Step \ref{protocol-step:persistence} to an invariant set (yellow, left) to get a new one (yellow, right). The associated persistence barcode is depicted below the figures.}
\label{fig:persistence-step}
\end{figure} 

\begin{figure}
    \centering
    \begin{subfigure}{.3\linewidth}
        \centering
        \includegraphics[width=42mm]{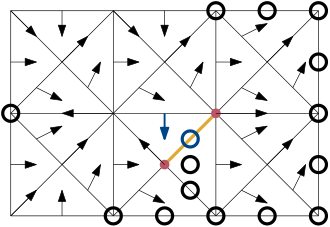}
        \caption{Initial multivector field}
        \label{fig:tracking-first}
        \end{subfigure}
    \hfill
    \begin{subfigure}{.3\linewidth}
        \centering
        \includegraphics[width=42mm]{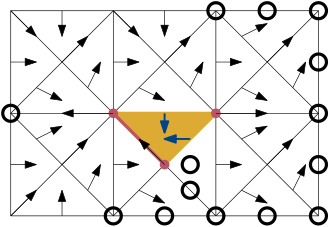}
        \captionsetup{textformat=simple}
        \caption{Atomic coarsening of \ref{fig:tracking-first}}
        \label{fig:tracking-second}
    \end{subfigure}
    \hfill
    \begin{subfigure}{.3\linewidth}
        \centering
        \includegraphics[width=42mm]{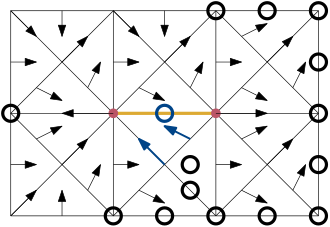}
        \captionsetup{textformat=simple}
        \caption{Atomic refinement of \ref{fig:tracking-second}}
        \label{fig:tracking-third}
    \end{subfigure}
    
    \bigskip
    
    \begin{subfigure}{.3\linewidth}
        \centering
        \includegraphics[width=42mm]{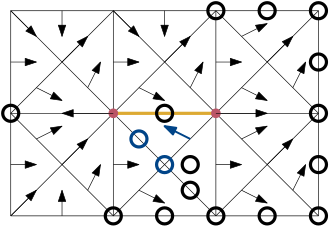}
        \caption{Atomic refinement of \ref{fig:tracking-third}}
        \label{fig:tracking-fourth}
        \end{subfigure}
    \hfill
    \begin{subfigure}{.3\linewidth}
        \centering
        \includegraphics[width=42mm]{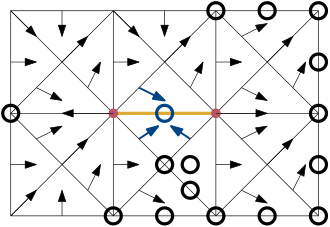}
        \captionsetup{textformat=simple}
        \caption{Atomic coarsening of \ref{fig:tracking-fourth}}
        \label{fig:tracking-fifth}
    \end{subfigure}
    \hfill
    \begin{subfigure}{.3\linewidth}
        \centering
        \includegraphics[width=42mm]{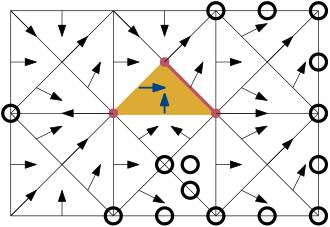}
        \captionsetup{textformat=simple}
        \caption{Atomic coarsening of \ref{fig:tracking-fifth}}
        \label{fig:tracking-sixth}
    \end{subfigure}
    
    \bigskip
    
    \begin{subfigure}{.3\linewidth}
        \centering
        \includegraphics[width=42mm]{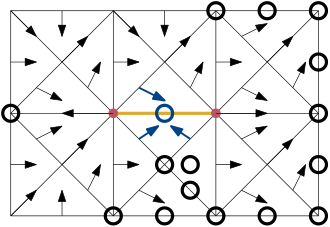}
        \caption{Atomic refinement of \ref{fig:tracking-sixth}}
        \label{fig:tracking-seventh}
        \end{subfigure}
    \hfill
    \begin{subfigure}{.3\linewidth}
        \centering
        \includegraphics[width=42mm]{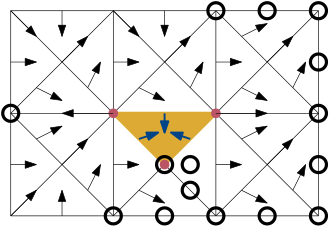}
        \captionsetup{textformat=simple}
        \caption{Atomic coarsening of \ref{fig:tracking-seventh}}
        \label{fig:tracking-eighth}
    \end{subfigure}
    \hfill
    \begin{subfigure}{.3\linewidth}
        \centering
        \includegraphics[width=42mm]{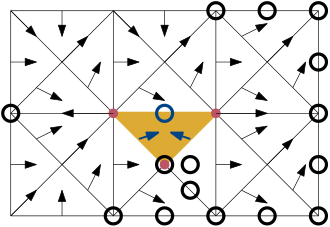}
        \captionsetup{textformat=simple}
        \caption{Atomic refinement of \ref{fig:tracking-eighth}}
        \label{fig:tracking-ninth}
    \end{subfigure}
    
    \caption{Subfigure \ref{fig:tracking-first} contains an initial multivector field and a seed isolated invariant set, which is a yellow edge. Each subsequent subfigure contains a multivector field that is an atomic refinement or atomic coarsening of the previous. The isolated invariant set that we get by iteratively applying the Tracking Protocol is depicted in yellow. Splitting and merging multivectors are in blue.}
    \label{fig:iterative-tracking}

\end{figure}

\begin{figure}
    \centering

    \begin{tikzpicture}[outer sep = 0, inner sep = 0]
    \draw[fill=light-gray,outer sep=0, inner sep=0] (-5.0,-0.3) -- (9,-0.3) -- (9,0.1) -- (-5.0,0.1) -- cycle;
    \node[align=left,color=black] at (-3.7,-0.1) {Dimension: 1};
    \draw[fill=light-gray,outer sep=0, inner sep=0] (6.0,-0.8) -- (9,-0.8) -- (9,-0.4) -- (6.0,-0.4) -- cycle;
    \node[align=left,color=black] at (7.3,-0.6) {Dimension: 1};
    \end{tikzpicture}

    \caption{The barcode associated with the tracked invariant sets in Figure \ref{fig:iterative-tracking}. Starting with subfigure \ref{fig:tracking-eighth}, we see the birth of a new $1$-dimensional homology generator.}
    \label{fig:iterative-tracking-barcode}

\end{figure}
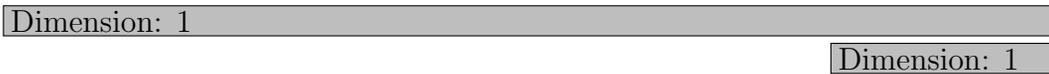
\section{Tracking via Continuation}\label{sec:continuation}

Now, we introduce continuation in the combinatorial setting, and we justify the canonicity of the choices made in Step \ref{protocol-step:continuation} of the Tracking Protocol. In addition, we show that if Step \ref{protocol-step:continuation} is used to obtain $S'$ from $S$, then $S$ and $S'$ are related by continuation. Continuation is closely related to the Conley index, so we begin with a brief review of the topic.

\subsection{Index Pairs and the Conley Index}

Originally developed in the classical setting by Conley \cite{Co78}, the Conley index associates a homological invariant with each isolated invariant set. It is defined through index pairs.

\begin{definition}[Index Pair]
Let $S$ denote an isolated invariant set under $\cV$, and let $P$ and $E$ denote closed sets where $E \subseteq P$. If the following all hold, then $(P,E)$ is an \emph{index pair} for $S$: 
\begin{enumerate}
    \item\label{it:index_pair_i} $F_{\cV}(P \setminus E) \subseteq P$
    \item\label{it:index_pair_ii} $F_{\cV}(E) \cap P \subseteq E$
    \item\label{it:index_pair_iii} $S = \inv_{\cV}(P \setminus E)$
\end{enumerate}
\label{def:index_pair}
\end{definition}

\begin{figure}[htbp]
\centering
\begin{tabular}{ccc}
  \includegraphics[height=32mm]{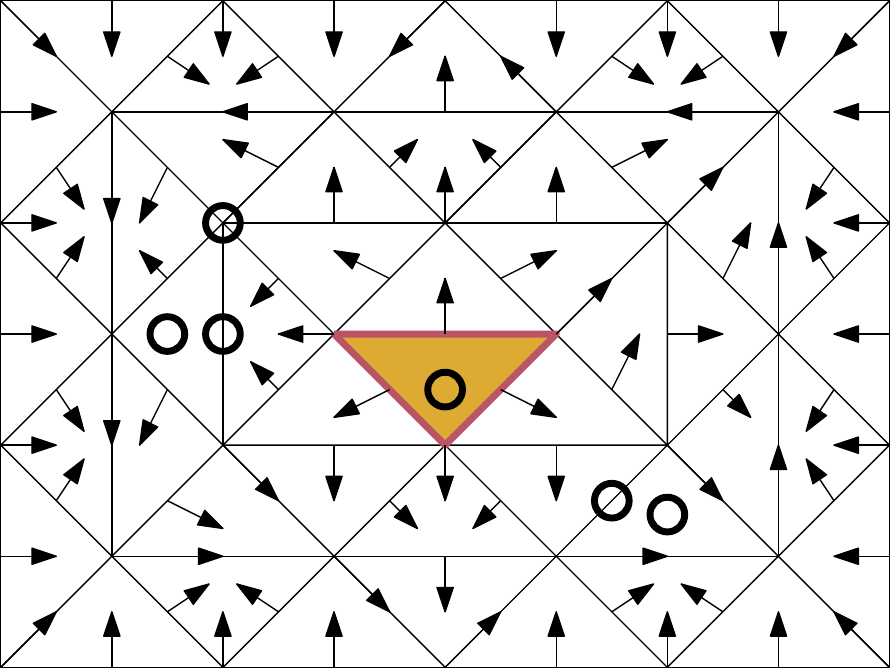}&
  \includegraphics[height=32mm]{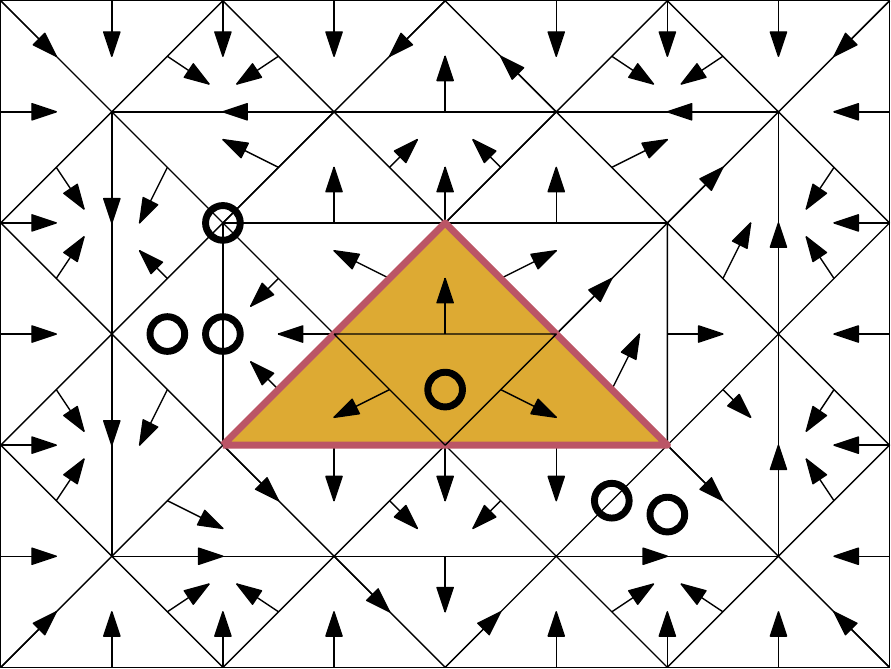}&
  \includegraphics[height=32mm]{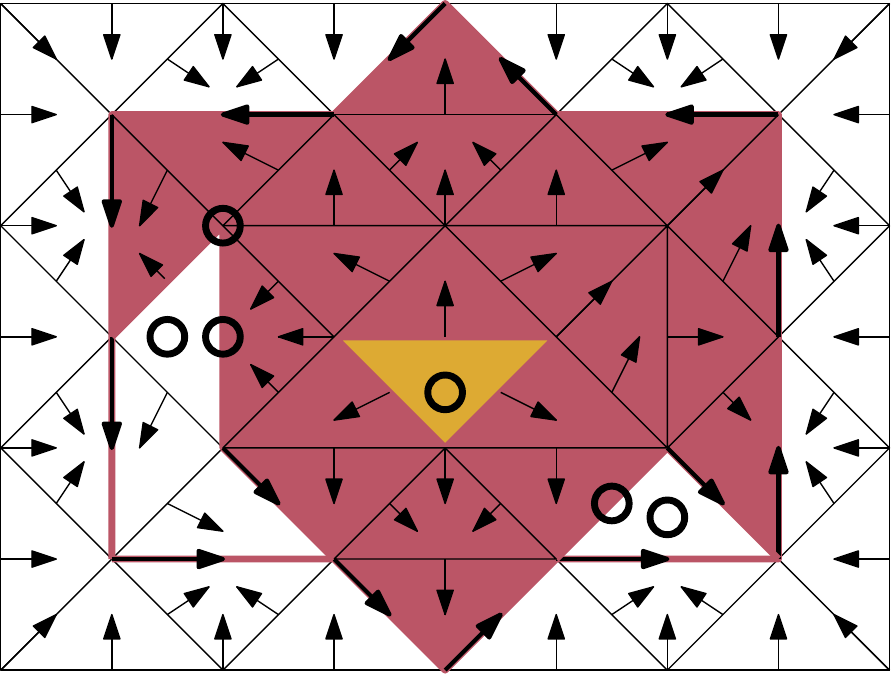}\\
\end{tabular}
\caption{
All three images depict an index pair for the yellow triangle marked with a black circle. $P$ is given by the red and yellow simplices, while $E$ is given by the red simplices.}
\label{fig:index-pair-example}
\end{figure} 

For examples, see Figure \ref{fig:index-pair-example}. If $(P,E)$ is an index pair for $S$, then the $k$-dimensional Conley index of $S$ is $H_k(P,E)$. 
The authors in \cite{LKMW2020} showed that the Conley index is well-defined.
\begin{theorem}{\cite[Theorem 5.16]{LKMW2020}}
Let $(P,E)$ and $(P',E')$ denote index pairs for $S$. Then $H_k(P,E) = H_k(P',E')$ for all $k \geq 0$. 
\end{theorem}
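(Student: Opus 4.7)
The plan is to show that any two index pairs $(P,E)$ and $(P',E')$ for $S$ can be connected through a chain of intermediate index pairs whose relative homologies are all isomorphic. A natural candidate for an intermediate pair is the ``union pair'' $(P \cup P', E \cup E')$, since it dominates both given pairs. First I would verify that $(P \cup P', E \cup E')$ is itself a valid index pair for $S$. Closedness is immediate from closedness of the four constituent sets. Condition \ref{it:index_pair_iii} follows because any essential solution witnessing invariance of a simplex $\sigma \in (P \cup P') \setminus (E \cup E')$ must stay inside whichever of $P \setminus E$ or $P' \setminus E'$ contains $\sigma$ (using conditions \ref{it:index_pair_i} and \ref{it:index_pair_ii} of the respective pair), and hence $\sigma \in S$. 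Conditions \ref{it:index_pair_i} and \ref{it:index_pair_ii} for the union pair reduce to a case analysis on which of the two pairs a given simplex belongs to, combined with the fact that $S$ is common to both.

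\textbf{Main algebraic step.} It then suffices to show that each of the inclusions $(P,E) \hookrightarrow (P \cup P', E \cup E')$ and $(P',E') \hookrightarrow (P \cup P', E \cup E')$ induces an isomorphism on relative homology. I would attack this using the long exact sequence of the triple $(P \cup P',\, P \cup E \cup E',\, E \cup E')$ together with an excision-style identification $H_k(P \cup P', P \cup E \cup E') \cong H_k(P', (P \cap P') \cup E')$, and then show that the latter group vanishes. To kill this relative homology in the finite combinatorial setting, the natural tool is a sequence of elementary collapses on the simplices of $P' \setminus P$ that lie outside $E'$, where each collapse is driven by the observation that such simplices are pushed by $F_{\cV}$ either into $E'$ or into $S \subseteq P \cap P'$; neither destination is part of the quotient, so each collapse is acyclic.

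\textbf{Main obstacle.} The hardest step will be executing this excision/collapse argument rigorously. Conditions \ref{it:index_pair_i} and \ref{it:index_pair_ii} are asymmetric and only control the action of $F_{\cV}$ on specific subsets, so one must carefully track how simplices in $P' \setminus P$ interact with $P$ under the multivalued map and ensure that the collapses respect both the simplicial boundary operator and the convexity constraints on multivectors. I expect Proposition \ref{prop:isolated_equiv_convex_vcomp} to be essential here: the $\cV$-compatibility and convexity of $S$ allow one to separate the ``core'' part of $P \cup P'$, which is forced to coincide on both pairs, from the ``outer'' part, which is dynamically transient and hence collapsible. If the direct argument on the union pair proves too intricate, a fallback is to first restrict both pairs to a common isolating set for $S$ (using Proposition \ref{prop:isolated_equiv_convex_vcomp} to exhibit one) and then run the same collapse argument within that smaller subcomplex, where the dynamics are more tightly constrained.
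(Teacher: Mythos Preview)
The paper does not prove this theorem; it is quoted from \cite{LKMW2020}. What the paper \emph{does} prove is the nested case (Theorem~\ref{thm:inclusion-induces-isomorphism}): given $(P',E')\subseteq(P,E)$, one passes through the auxiliary pairs $(P,E'\cap P)$ and $(P\cup E',E')$, invoking Lemma~\ref{lem:semi-equal_isomorphism} (inclusion is an isomorphism when one of the two components is held fixed) for the outer steps and simplicial excision for the middle step. The full result in \cite{LKMW2020} is obtained by linking an arbitrary index pair to the canonical minimal pair $(\cl(S),\mo(S))$ of Proposition~\ref{prop:closure-mouth} via such nested chains. Your proposal takes a genuinely different route, going ``up'' through the union pair rather than ``down'' through the minimal pair.

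Unfortunately the union route has a real gap: $(P\cup P',\,E\cup E')$ need not be an index pair. Condition~\ref{it:index_pair_ii} is the one that breaks. Take $\sigma\in E\cap(P'\setminus E')$; such a simplex exists whenever the exit set of the smaller pair sits inside the interior of the larger pair, exactly the situation in the left and middle pictures of Figure~\ref{fig:index-pair-example}. By Proposition~\ref{prop:P_minus_E_convex_vcomp} the set $P'\setminus E'$ is $\cV$-compatible, so the whole multivector $[\sigma]_\cV$ lies in $P'\setminus E'$. If some $\tau\in[\sigma]_\cV$ lies outside $P$ (in Figure~\ref{fig:index-pair-example} take $\sigma$ to be a boundary edge of the yellow triangle and $\tau$ the adjacent outer triangle), then $\tau\in F_\cV(E\cup E')\cap(P\cup P')$ but $\tau\notin E$ (since $E\subseteq P$) and $\tau\notin E'$, so $\tau\notin E\cup E'$. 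Thus condition~\ref{it:index_pair_ii} fails, and your ``case analysis'' cannot rescue it: the two pairs simply do not constrain $F_\cV(E)\cap P'$. The situation is dual to the phenomenon the paper flags in Figure~\ref{fig:intersection-not-index-pair} for intersections.

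Your verification of condition~\ref{it:index_pair_iii} is also not quite right as stated (the essential solution can a priori switch between $P\setminus E$ and $P'\setminus E'$), though that particular condition \emph{can} be salvaged by a forward-invariance argument using condition~\ref{it:index_pair_i} for each pair. But since condition~\ref{it:index_pair_ii} fails outright, the union pair is not available as an intermediate object, and the subsequent excision/collapse program has nothing to stand on. The fix is to abandon the union and instead connect each of $(P,E)$ and $(P',E')$ to $(\cl(S),\mo(S))$ by nested chains, which is precisely what Lemma~\ref{lem:semi-equal_isomorphism} and Theorem~\ref{thm:inclusion-induces-isomorphism} are designed for.
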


For a single isolated invariant set, there may be many possible index pairs. 
However, we can choose a canonical one, namely, the minimal index pair.
\begin{proposition}{\cite[Proposition 5.3]{LKMW2020}} 
Let $S$ denote an isolated invariant set. The pair $(\cl(S),\mo(S))$ is an index pair for $S$.
\label{prop:closure-mouth}
\end{proposition}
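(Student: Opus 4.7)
The plan is to set $P := \cl(S)$ and $E := \mo(S)$, so that $P \setminus E = \cl(S) \setminus (\cl(S) \setminus S) = S$, and then verify the three conditions of Definition \ref{def:index_pair}. The key input is Proposition \ref{prop:isolated_equiv_convex_vcomp}, which gives that $S$ is convex and $\cV$-compatible; essentially every step below is a short deduction from one of these two properties.

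First I would verify the standing hypothesis that $P$ and $E$ are both closed. That $\cl(S)$ is closed is immediate from the definition. For $\mo(S) = \cl(S) \setminus S$, I would take $\tau \in \mo(S)$ and $\sigma \leq \tau$; clearly $\sigma \in \cl(\cl(S)) = \cl(S)$, so the content is to show $\sigma \notin S$. If instead $\sigma \in S$, then since $\tau \in \cl(S)$ there is some $\gamma \in S$ with $\tau \leq \gamma$; the chain $\sigma \leq \tau \leq \gamma$ with $\sigma,\gamma \in S$ then forces $\tau \in S$ by convexity of $S$, contradicting $\tau \in \mo(S)$.

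Next I would verify the three conditions. For (\ref{it:index_pair_i}), $F_{\cV}(S) = \bigcup_{\sigma \in S}\bigl(\cl(\sigma) \cup [\sigma]_{\cV}\bigr)$: each $\cl(\sigma)$ lies in $\cl(S)$ by definition, and each $[\sigma]_{\cV}$ lies in $S \subseteq \cl(S)$ by $\cV$-compatibility. Condition (\ref{it:index_pair_iii}) is immediate because $S$ is invariant, so $\inv_{\cV}(S) = S$. The main work is condition (\ref{it:index_pair_ii}): given $\sigma \in \mo(S)$ and $\tau \in F_{\cV}(\sigma) \cap \cl(S)$, I would split on whether $\tau \leq \sigma$ or $\tau \in [\sigma]_{\cV}$. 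In the face case, if $\tau \in S$, then picking $\gamma \in S$ with $\sigma \leq \gamma$ (which exists because $\sigma \in \cl(S)$) and applying convexity of $S$ to $\tau \leq \sigma \leq \gamma$ forces $\sigma \in S$, contradicting $\sigma \in \mo(S)$. In the multivector case, $\cV$-compatibility of $S$ implies $[\sigma]_{\cV} \cap S = \emptyset$ (since $\sigma \notin S$), so $\tau \notin S$. Combined with $\tau \in \cl(S)$, this gives $\tau \in \mo(S)$.

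The main obstacle is really just bookkeeping in condition (\ref{it:index_pair_ii}); there is no single hard step, but one must be careful to use convexity in the face subcase and $\cV$-compatibility in the multivector subcase, and to remember that points of $\mo(S)$ still lie in $\cl(S)$ (so they sit below something in $S$), which is what drives the contradiction with convexity in the face subcase. Closure of $\mo(S)$ requires the same convexity argument, so once that lemma is recorded, the rest of the proof is a routine unpacking of the definitions.
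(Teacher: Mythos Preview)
Your proof is correct. The paper does not actually prove this proposition---it is cited from \cite{LKMW2020}---so there is no in-paper argument to compare against directly. That said, the paper does prove the closely related Proposition~\ref{prop:convex-and-v-compatible-gives-index-pair} (which together with Proposition~\ref{prop:isolated_equiv_convex_vcomp} immediately yields the present statement), and your argument follows essentially the same structure as that proof: use $\cV$-compatibility for condition~(\ref{it:index_pair_i}), use $P\setminus E=S$ for condition~(\ref{it:index_pair_iii}), and split condition~(\ref{it:index_pair_ii}) into the multivector and face subcases. One small difference: the paper's proof of Proposition~\ref{prop:convex-and-v-compatible-gives-index-pair} handles the face subcase by simply invoking that $\mo(A)$ is closed, whereas you spell out the convexity argument explicitly (and, as you note, this same argument is exactly what establishes closedness of $\mo(S)$ in the first place). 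So your write-up is, if anything, slightly more self-contained.
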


The following two propositions show that convex and $\cV$-compatible sets are crucial for finding index pairs.

\begin{proposition}\cite[Proposition 5.6]{LKMW2020} 
    Let $(P,E)$ be an index pair under $\cV$. Then $P\setminus E$ is convex and $\cV$-compatible.
    \label{prop:P_minus_E_convex_vcomp}
\end{proposition}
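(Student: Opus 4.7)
The plan is to verify the two properties \emph{convex} and \emph{$\cV$-compatible} separately, using each of the two containment conditions in the definition of an index pair once. The key observation is that $F_{\cV}(\sigma) = \cl(\sigma) \cup [\sigma]_{\cV}$ packages both the face relation and the multivector partition, so condition (\ref{it:index_pair_i}) ($F_{\cV}(P\setminus E)\subseteq P$) simultaneously controls how faces and comultivector elements sit inside $P$, while condition (\ref{it:index_pair_ii}) ($F_{\cV}(E)\cap P\subseteq E$) plays a symmetric role for $E$. The closedness of $E$ (and of $P$) will also be essential.

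For convexity, I would take $\sigma,\tau\in P\setminus E$ and $\rho\in K$ with $\sigma\leq\rho\leq\tau$, and show $\rho\in P\setminus E$. Since $\rho\leq\tau$, we have $\rho\in\cl(\tau)\subseteq F_{\cV}(\tau)$, and because $\tau\in P\setminus E$, condition (\ref{it:index_pair_i}) gives $\rho\in P$. To rule out $\rho\in E$, I would argue by contradiction: if $\rho\in E$, then $\sigma\leq\rho$ together with $E=\cl(E)$ yields $\sigma\in E$, contradicting $\sigma\in P\setminus E$. Hence $\rho\in P\setminus E$.

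For $\cV$-compatibility, I would fix $\sigma\in P\setminus E$ and any $\tau\in[\sigma]_{\cV}$ and show $\tau\in P\setminus E$. Since $\tau\in[\sigma]_{\cV}\subseteq F_{\cV}(\sigma)$, condition (\ref{it:index_pair_i}) gives $\tau\in P$. If we had $\tau\in E$, then using $[\tau]_{\cV}=[\sigma]_{\cV}$ we would get $\sigma\in[\tau]_{\cV}\subseteq F_{\cV}(\tau)\subseteq F_{\cV}(E)$; combined with $\sigma\in P$, condition (\ref{it:index_pair_ii}) would force $\sigma\in E$, again contradicting $\sigma\in P\setminus E$. Thus $\tau\notin E$, and $P\setminus E$ is closed under taking comultivector partners, i.e.\ it is a union of multivectors in $\cV$.

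I do not expect any real technical obstacle here: the entire argument is a bookkeeping exercise that routes a candidate element through the two set-containment axioms of an index pair. The only subtlety to watch is keeping straight the direction of the face relation in the $E$-closedness step and remembering that membership in the same multivector is symmetric, so that both $\tau\in F_{\cV}(\sigma)$ and $\sigma\in F_{\cV}(\tau)$ are available whenever $\sigma$ and $\tau$ lie in a common multivector.
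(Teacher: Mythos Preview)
Your argument is correct. Both parts---convexity and $\cV$-compatibility---are handled cleanly: condition~(\ref{it:index_pair_i}) pushes the candidate element into $P$, and then either the closedness of $E$ (for convexity) or condition~(\ref{it:index_pair_ii}) (for $\cV$-compatibility) rules out membership in $E$. The only places to be careful are exactly the ones you flagged, and you handled them correctly.

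As for comparison with the paper: there is nothing to compare. The paper does not prove this proposition at all; it merely quotes it from \cite[Proposition 5.6]{LKMW2020}. Your self-contained argument from the index-pair axioms is the standard direct proof and is essentially what one finds in the cited source.
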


\begin{proposition}
If $A$ is convex and $\cV$-compatible, then $(\cl(A),\mo(A))$ is an index pair for $\inv_{\cV}(A)$.
\label{prop:convex-and-v-compatible-gives-index-pair}
\end{proposition}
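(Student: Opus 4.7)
The plan is to verify the three defining conditions of an index pair (Definition 2) directly for the pair $(\cl(A),\mo(A))$ with respect to $S := \inv_{\cV}(A)$. Note that by Proposition \ref{prop:invariant_part_of_convex_vcomp}, $S$ is an isolated invariant set, so it makes sense to speak of an index pair for $S$. Also $\cl(A)$ and $\mo(A)$ are closed by definition. The key combinatorial observation that makes everything go through is the identity $\cl(A)\setminus \mo(A)=A$, which follows immediately from $\mo(A)=\cl(A)\setminus A$ and $A\subseteq \cl(A)$. In particular, condition \ref{it:index_pair_iii} reduces to the tautology $S=\inv_{\cV}(A)$.

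Next I would dispatch condition \ref{it:index_pair_i}, i.e., $F_{\cV}(A)\subseteq \cl(A)$. For any $\sigma\in A$, $F_{\cV}(\sigma)=\cl(\sigma)\cup [\sigma]_{\cV}$. The first piece is contained in $\cl(A)$ by monotonicity of closure, and the second piece is contained in $A\subseteq\cl(A)$ precisely because $A$ is $\cV$-compatible (every multivector meeting $A$ is fully contained in $A$). This is where $\cV$-compatibility is used.

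The main (and essentially the only nontrivial) step is condition \ref{it:index_pair_ii}: $F_{\cV}(\mo(A))\cap \cl(A)\subseteq \mo(A)$. I would pick $\sigma\in \mo(A)$ and $\tau\in F_{\cV}(\sigma)\cap \cl(A)$, and argue by contradiction that $\tau\notin A$. Splitting on the two cases in the definition of $F_{\cV}(\sigma)$:
\begin{enumerate}
\item If $\tau\in \cl(\sigma)$, so $\tau\leq \sigma$, and if additionally $\tau\in A$, then since $\sigma\in\cl(A)$ there exists some $\sigma'\in A$ with $\sigma\leq \sigma'$. The chain $\tau\leq \sigma\leq \sigma'$ with endpoints in $A$ forces $\sigma\in A$ by convexity, contradicting $\sigma\in \mo(A)$.
\item If $\tau\in [\sigma]_{\cV}$, and $\tau\in A$, then $\cV$-compatibility gives $[\sigma]_{\cV}=[\tau]_{\cV}\subseteq A$, so in particular $\sigma\in A$, again contradicting $\sigma\in\mo(A)$.
\end{enumerate}
Either way $\tau\notin A$, but $\tau\in \cl(A)$, so $\tau\in \mo(A)$, as required.

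I do not expect a genuine obstacle here; the proof is essentially a bookkeeping exercise that cleanly separates the use of convexity (which handles the face direction in $F_{\cV}$) from the use of $\cV$-compatibility (which handles the multivector direction in $F_{\cV}$). The only subtle point worth being careful about is not conflating $\cl(A)\setminus \mo(A)$ with some smaller set; once one records that it equals $A$ verbatim, the remaining verification is routine.
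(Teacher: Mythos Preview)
Your proposal is correct and follows essentially the same route as the paper: verify the three index-pair conditions directly, using $\cl(A)\setminus\mo(A)=A$ for condition~\ref{it:index_pair_iii}, $\cV$-compatibility for condition~\ref{it:index_pair_i}, and a case split on $F_{\cV}$ for condition~\ref{it:index_pair_ii}. The only cosmetic difference is in the closure case of condition~\ref{it:index_pair_ii}: the paper disposes of $\tau\in\cl(\sigma)$ by invoking that $\mo(A)$ is closed, whereas you unpack that fact via convexity (your chain $\tau\le\sigma\le\sigma'$ is exactly the argument that $\mo(A)$ is closed when $A$ is convex). One small quibble: $\mo(A)$ is not closed ``by definition'' but because $A$ is convex---your own argument in case~1 is precisely what justifies this, so you may want to rephrase that sentence.
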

\begin{proof}
    By Proposition \ref{prop:invariant_part_of_convex_vcomp} the set $S=\inv_{\cV}(A)$ is an isolated invariant set.
    Since $\cl (A) \setminus \mo (A)= A$, we immediately get condition \ref{it:index_pair_iii} from Definition $\ref{def:index_pair}$.
    Since $A$ is $\cV$-compatible we get $F_\cV(A)=\cl A$, and thus, condition \ref{it:index_pair_i}.
    To see condition \ref{it:index_pair_ii} consider $x\in F_\cV(\mo (A))$.
    By the definition of $F_\cV$ there exists an $a \in \mo(A)$ such that either $x\in[a]_{\cV}$ or $x\in\cl(a)$.
    In the first case $x \not\in A$, because $A$ is $\cV$-compatible and $ a \not\in A$. Therefore $[a]_{\cV}\cap\cl(A)\subseteq \cl(A)\setminus A=\mo(A)$.
    If $x\in\cl(a)$ then $x\in\mo A$, because $\mo(A)$ is closed.
    Hence, it follows that $F_\cV(\mo (A))\cap \cl(A) \subseteq\mo(A)$.
\end{proof}

\subsection{Combinatorial Continuation and the Tracking Protocol}

We now move to placing continuation in the combinatorial setting and explaining Step \ref{protocol-step:continuation} of the Tracking Protocol. In essence, a continuation captures the presence of the ``same'' isolated invariant set across multiple multivector fields. We then show that Step \ref{protocol-step:continuation} of the Tracking Protocol does use continuation to track an isolated invariant set. 
\begin{definition}
Let $S_1$, $S_2$, \ldots, $S_n$ denote a sequence of isolated invariant sets under the multivector fields $\cV_1$, $\cV_2$, \ldots, $\cV_n$, where each $\cV_i$ is defined on a fixed simplicial complex $K$. 
We say that isolated invariant set $S_1$ \emph{continues} to isolated invariant set $S_n$ whenever there exists a sequence of index pairs $(P_1, E_1)$, $(P_2,E_2)$, \ldots, $(P_{n-1},E_{n-1})$ where $(P_i,E_i)$ is an index pair for both $S_i$ and $S_{i+1}$. Such a sequence is a \emph{sequence of connecting index pairs}.
\end{definition}

\begin{figure}[htbp]
\centering
\begin{tabular}{cc}
  \includegraphics[height=40mm]{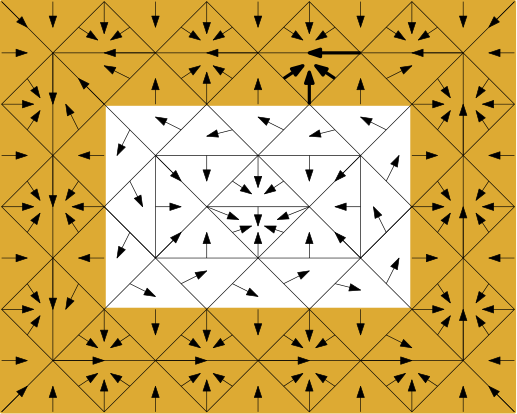}&
  \includegraphics[height=40mm]{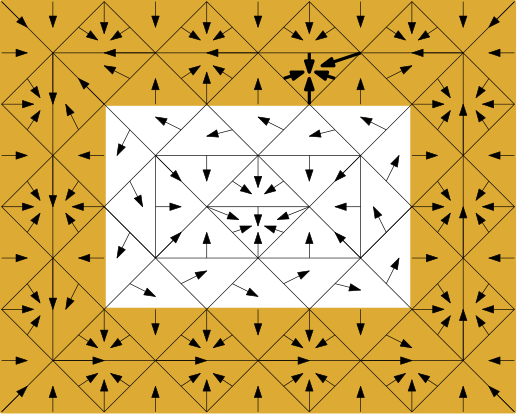}\\
\end{tabular}
\caption{An index pair, where $P$ is in yellow and $E$ is empty, for the isolated invariant sets in Figure \ref{fig:coarsen-between}. There is a common index pair for both isolated invariant sets, so they form a continuation. }
\label{fig:continuation-ex-attractor}
\end{figure} 

\begin{figure}[htbp]
\centering
\begin{tabular}{cc}
  \includegraphics[height=40mm]{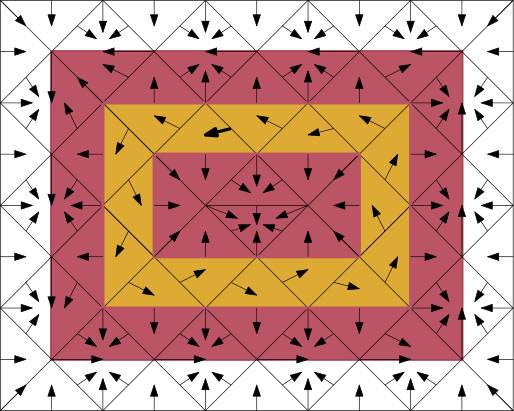}&
  \includegraphics[height=40mm]{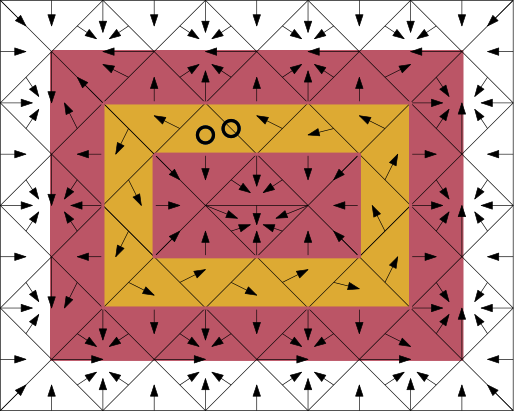}\\
\end{tabular}
\caption{An index pair, where $P$ is given by the yellow and red simplices and $E$ is given by the red simplices, for the isolated invariant sets in Figure \ref{fig:coarsen-within}. Thus, they form a continuation. }
\label{fig:continuation-ex-repeller}
\end{figure} 

Each index pair $(P_i,E_i)$ in a connecting sequence of index pairs is an index pair for a pair of consecutive isolated invariant sets $S_i$ and $S_{i+1}$ (see Figures \ref{fig:continuation-ex-attractor} and \ref{fig:continuation-ex-repeller}). Hence, the isolated invariant sets in the continuation all have the same Conley index. In this sense, we are capturing the ``same'' isolated invariant set. In Step \ref{protocol-step:continuation} of the Tracking Protocol, we first attempt to track the isolated invariant set $S$ via continuation. That is, if we use Step \ref{protocol-step:continuation}, then we choose $S'$ such that $S$ and $S'$ have a common index pair, say $(P,E)$. It so happens that $(P,E)$ is easy to find algorithmically. We begin with the refinement case, or Step \ref{protocol-step:refinement}. 
\begin{theorem}
Let $\cV$ and $\cV'$ denote multivector fields where $\cV'$ is an atomic refinement of $\cV$. Let $A$ be a $\cV$-compatible and convex set. The pair $(\cl(A), \mo(A))$ is an index pair for both $\inv_{\cV}(A)$ under $\cV$ and $\inv_{\cV'}(A)$ under $\cV'$.
\label{thm:refinement-case}
\end{theorem}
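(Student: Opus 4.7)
The plan is to reduce the entire claim to Proposition \ref{prop:convex-and-v-compatible-gives-index-pair}, applied twice: once with respect to $\cV$ and once with respect to $\cV'$. Since $\cl(A)$ and $\mo(A)$ are purely order-theoretic notions that do not depend on the multivector field, establishing that $A$ is convex and compatible with both $\cV$ and $\cV'$ will immediately give the conclusion.

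First I would handle the $\cV$ side. By hypothesis $A$ is convex and $\cV$-compatible, so Proposition \ref{prop:convex-and-v-compatible-gives-index-pair} applies directly and yields that $(\cl(A), \mo(A))$ is an index pair for $\inv_{\cV}(A)$ under $\cV$. No further work is needed here.

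The substantive step is showing that $A$ is $\cV'$-compatible, after which the same proposition applied to $\cV'$ finishes the argument. To see $\cV'$-compatibility, I would unpack the definition of an atomic refinement: every $V' \in \cV'$ is contained in some $V \in \cV$, and $\cV'$ is obtained from $\cV$ by splitting a single multivector $V^\star$ into two pieces $V_1', V_2'$ with $V_1' \cup V_2' = V^\star$, leaving every other multivector unchanged. Let $V' \in \cV'$ be any multivector meeting $A$. If $V' \in \cV$, then $\cV$-compatibility of $A$ forces $V' \subseteq A$. If $V' \in \{V_1', V_2'\}$, then $V' \subseteq V^\star$, and because $V^\star$ meets $A$ (through $V'$), $\cV$-compatibility yields $V^\star \subseteq A$, hence $V' \subseteq A$. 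Thus $A$ is a union of multivectors of $\cV'$, i.e.\ $A$ is $\cV'$-compatible. Combined with convexity (which is unchanged), Proposition \ref{prop:convex-and-v-compatible-gives-index-pair} gives that $(\cl(A), \mo(A))$ is an index pair for $\inv_{\cV'}(A)$ under $\cV'$.

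The only ``obstacle'' is the bookkeeping in the refinement step, and even that is a single case split on whether the multivector $V' \in \cV'$ under consideration is the newly split piece or not. Conceptually the result says that convexity and compatibility are preserved under refinement, and therefore so is the canonical index pair built from closure and mouth.
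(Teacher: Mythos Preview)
Your proposal is correct and follows essentially the same approach as the paper: reduce both claims to Proposition~\ref{prop:convex-and-v-compatible-gives-index-pair}, with the only work being the verification that $A$ is $\cV'$-compatible via the observation that an atomic refinement splits exactly one multivector. The paper phrases this check by writing $A$ as a union $\bigcup_{V\in R}V$ and replacing the split multivector $W$ by its two pieces, whereas you phrase it by checking that every $V'\in\cV'$ meeting $A$ lies in $A$; these are the same argument.
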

\begin{proof}
    By Proposition \ref{prop:convex-and-v-compatible-gives-index-pair}, if $A$ is convex and $\cV$-compatible, then $(\cl(A), \mo(A))$ is an index pair for $\inv_{\cV}(A)$. By assumption, $A$ is $\cV$-compatible, so $(\cl(A),\mo(A))$ is an index pair for $\inv_{\cV}(A)$. Hence, if we can show that $A$ is $\cV'$-compatible, then it will immediately follow by Proposition \ref{prop:convex-and-v-compatible-gives-index-pair} that $(\cl(A),\mo(A))$ is an index pair for $\inv_{\cV'}(A)$. Because $A$ is $\cV$-compatible, it follows that there exists a set of multivectors $R \subseteq \cV$ where $A = \cup_{V \in R} V$. Recall that as $\cV'$ is an atomic refinement of $\cV$, there is exactly one multivector $W \in \cV \setminus \cV'$. If $W \not\in R$, then we are done, and $A$ is necessarily $\cV'$-compatible, as each multivector in $R$ is a multivector in $\cV'$. If $W \in R$, then we observe that there exist two multivectors $W_1,W_2 \in \cV' \setminus \cV$ where $W = W_1 \cup W_2$. In such a case, it follows easily that if $R' = (R \setminus \{W\}) \cup \{ W_1, W_2\}$, then each multivector in $R'$ is a multivector in $\cV'$ and $A = \cup_{V \in R'} V$. Hence, $A$ is $\cV'$-compatible, and $(\cl(A),\mo(A))$ is an index pair for $\inv_{\cV'}(A)$. 
\end{proof}

In Step \ref{protocol-step:refinement} of the Tracking Protocol, where $\cV'$ is an atomic refinement of $\cV$, we choose $S' := \inv_{\cV'}(S)$. By Proposition \ref{prop:isolated_equiv_convex_vcomp}, it follows that $S$ is $\cV$-compatible. By identical reasoning to that presented in the proof of Theorem \ref{thm:refinement-case}, it follows that $S$ is also $\cV'$-compatible. Hence, Theorem \ref{thm:refinement-case} implies that $(\cl(S),\mo(S))$ is an index pair for both $S = \inv_{\cV}(S)$ and $S' = \inv_{\cV'}(S)$. Thus, $S$ and $S'$ share an index pair.  

The case of an atomic coarsening, corresponding to Steps \ref{protocol-step:coarsen-within}, \ref{protocol-step:coarsen-outside}, and \ref{protocol-step:coarsen-between} of the Tracking Protocol, is more complicated. Recall that if $\cV'$ is an atomic coarsening of $\cV$, then the unique multivector $V \in \cV' \setminus \cV$ is called the \emph{merged multivector}. 
\begin{theorem}
Let $\cV$ and $\cV'$ denote multivector fields where $\cV'$  is an atomic coarsening of $\cV$. 
Let $A$ be a convex and $\cV$-compatible set, and let $V \in \cV'$ be the unique merged multivector. 
If $V \subseteq A$ or $V \cap A = \emptyset$, then $(\cl(A), \mo(A))$ is an index pair for both $\inv_{\cV}(A)$ and $\inv_{\cV'}(A)$. 
\label{thm:coarsening-case-1-2}
\end{theorem}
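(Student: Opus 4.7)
The plan is to reduce this theorem to Proposition \ref{prop:convex-and-v-compatible-gives-index-pair} twice. That proposition already tells us that whenever a set is convex and compatible with a multivector field, its closure/mouth pair is an index pair for its invariant part. Since $A$ is assumed convex and $\cV$-compatible, the first half—$(\cl(A),\mo(A))$ is an index pair for $\inv_{\cV}(A)$—is immediate. Convexity of $A$ is a purely poset-theoretic property, independent of the multivector field, so it carries over to $\cV'$ for free. Thus the entire task reduces to showing $A$ is $\cV'$-compatible; the conclusion for $\inv_{\cV'}(A)$ then follows by a second application of Proposition \ref{prop:convex-and-v-compatible-gives-index-pair}.

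To verify $\cV'$-compatibility, I will use the same bookkeeping as in the proof of Theorem \ref{thm:refinement-case}, but run in reverse. Since $\cV'$ is an atomic coarsening of $\cV$, there are exactly two multivectors $V_1, V_2 \in \cV \setminus \cV'$ with $V_1 \cup V_2 = V$, and every other element of $\cV$ is an element of $\cV'$. By $\cV$-compatibility, write $A = \bigcup_{W \in R} W$ for some $R \subseteq \cV$.

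Case 1 ($V \cap A = \emptyset$): Then $V_1, V_2 \notin R$, so $R \subseteq \cV \cap \cV' \subseteq \cV'$ and the same decomposition exhibits $A$ as a union of multivectors of $\cV'$. Case 2 ($V \subseteq A$): Then $V_1, V_2 \subseteq A$, and since distinct multivectors of $\cV$ are disjoint and $A$ is $\cV$-compatible, this forces $V_1, V_2 \in R$. Setting $R' := (R \setminus \{V_1, V_2\}) \cup \{V\}$ gives $R' \subseteq \cV'$ and $A = \bigcup_{W \in R'} W$, establishing $\cV'$-compatibility.

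With $A$ now known to be convex and $\cV'$-compatible, Proposition \ref{prop:convex-and-v-compatible-gives-index-pair} yields that $(\cl(A), \mo(A))$ is an index pair for $\inv_{\cV'}(A)$, completing the proof. There is no real obstacle here: the entire content of the argument is the repartitioning step in Case 2, which is essentially the reverse of the refinement bookkeeping already done in Theorem \ref{thm:refinement-case}. The substantive work is done by Proposition \ref{prop:convex-and-v-compatible-gives-index-pair}; this theorem just records that the two hypotheses on $V$ ($V \subseteq A$ or $V \cap A = \emptyset$) are precisely what is needed to preserve compatibility through the coarsening.
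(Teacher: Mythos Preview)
Your proposal is correct and follows essentially the same approach as the paper: both reduce the claim to Proposition~\ref{prop:convex-and-v-compatible-gives-index-pair} by checking that $A$ is $\cV'$-compatible under each hypothesis on $V$, invoking the bookkeeping from Theorem~\ref{thm:refinement-case} in the $V \subseteq A$ case. Your write-up is in fact more explicit than the paper's, which simply appeals to ``the same reasoning as in the proof of Theorem~\ref{thm:refinement-case}'' without spelling out the repartitioning.
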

\begin{proof}
    If $V \cap A = \emptyset$, then $A$ is both $\cV$-compatible and $\cV'$-compatible. Thus, Proposition \ref{prop:convex-and-v-compatible-gives-index-pair} implies that $(\cl(A),\mo(A))$ is an index pair for both $S = \inv_{\cV}(A)$ and $S' = \inv_{\cV'}(A)$. 
    
    If $V \subseteq A$, then by the same reasoning as in the proof of Theorem \ref{thm:refinement-case}, it follows that $A$ is both $\cV$-compatible and $\cV'$-compatible. Thus, Proposition \ref{prop:convex-and-v-compatible-gives-index-pair} implies that $(\cl(A),\mo(A))$ is an index pair for both $\inv_{\cV}(A)$ and $\inv_{\cV}(A)$. 
\end{proof}


By Proposition \ref{prop:isolated_equiv_convex_vcomp}, $S$ is convex and $\cV$-compatible. Theorem \ref{thm:coarsening-case-1-2} implies that if $V \subseteq S$ or $V \cap S = \emptyset$, then $(\cl(S),\mo(S))$ is an index pair for both $\inv_{\cV}(S) = S$ and $\inv_{\cV'}(S) = S'$. In Steps \ref{protocol-step:coarsen-within} and \ref{protocol-step:coarsen-outside} of the Tracking Protocol, $S'$ is chosen as $\inv_{\cV'}(S)$. Hence, the index pair $(\cl(S),\mo(S))$ is an index pair for both $S$ and $S'$. 

A more complicated case is Step \ref{protocol-step:coarsen-between}, where $V \cap S \neq \emptyset$ and $V \not\subseteq S$. Recall that $A := \langle S \cup V \rangle_{\cV'}$ denotes the intersection of all convex and $\cV'$-compatible sets that contain $S \cup V$, and in particular, $A$ is convex and $\cV'$-compatible. In Step \ref{protocol-step:coarsen-between} of the Tracking Protocol, we first check if $S = \inv_{\cV}( A )$. By Proposition \ref{prop:convex-and-v-compatible-gives-index-pair}, if $S = \inv(A)$, then $(\cl(A),\mo(A))$ is an index pair for $S$. The set $\langle S \cup V \rangle_{\cV'}$ is necessarily $\cV$-compatible, because it is $\cV'$-compatible by construction and it contains the unique merged multivector. Hence, Proposition \ref{prop:invariant_part_of_convex_vcomp} implies that $S' := \inv_{\cV'}(A)$ is an isolated invariant set. Thus, Proposition \ref{prop:convex-and-v-compatible-gives-index-pair} implies that $(\cl(A),\mo(A))$ is also an index pair for $S'$. Hence, if Step \ref{protocol-step:coarsen-between} gives $S'$, there is an index pair for $S$ and $S'$. 

In Step \ref{protocol-step:continuation-impossible} of the Tracking Protocol, we claim that if $S \neq \inv_{\cV}(A)$, then it is not possible to continue. Equivalently, there is no $S'$ that shares an index pair with $S$. 
\begin{theorem}
    Let $S$ denote an isolated invariant set under $\cV$ and let $\cV'$ denote an atomic coarsening of $\cV$ where the unique merged multivector $V \in \cV' \setminus \cV$ satisfies the formulae $V \cap S \neq \emptyset$ and $V \not\subseteq S$. 
    Furthermore, let $A := \langle S \cup V \rangle_{\cV'}$. 
    If $S \neq \inv_{\cV}(A)$, then there does not exist an isolated invariant set $S'$ under $\cV'$ for which there is an index pair $(P,E)$ satisfying $\inv_{\cV}(P \setminus E) = S$ and $\inv_{\cV'}(P \setminus E) = S'$.
\label{thm:impossible_continuation}
\end{theorem}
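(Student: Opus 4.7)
The plan is to argue by contradiction. Suppose there exist an isolated invariant set $S'$ under $\cV'$ and an index pair $(P,E)$ with $\inv_{\cV}(P\setminus E) = S$ and $\inv_{\cV'}(P\setminus E) = S'$. My goal is to squeeze $A$ between $S$ and $P\setminus E$ in such a way that $\inv_\cV(A) = S$, contradicting the hypothesis.

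First, I invoke Proposition \ref{prop:P_minus_E_convex_vcomp} to conclude that $P\setminus E$ is convex and $\cV'$-compatible. Since $S = \inv_{\cV}(P\setminus E)\subseteq P\setminus E$ and $V\cap S\neq\emptyset$ by hypothesis, we have $V\cap(P\setminus E)\neq\emptyset$; $\cV'$-compatibility of $P\setminus E$ then forces the entire merged multivector $V$ to lie in $P\setminus E$. Therefore $S\cup V\subseteq P\setminus E$, and since $P\setminus E$ is itself convex and $\cV'$-compatible, the minimality property of $A=\langle S\cup V\rangle_{\cV'}$ yields $A\subseteq P\setminus E$.

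Next, I would use monotonicity of the invariant-part operator: if $B_1\subseteq B_2$, any essential solution contained in $B_1$ is also contained in $B_2$, so $\inv_\cV(B_1)\subseteq \inv_\cV(B_2)$. Applied to the chain $S\subseteq A\subseteq P\setminus E$, together with $\inv_\cV(S)=S$ and $\inv_\cV(P\setminus E)=S$, this gives the sandwich
\[
S \;=\; \inv_{\cV}(S)\;\subseteq\;\inv_{\cV}(A)\;\subseteq\;\inv_{\cV}(P\setminus E)\;=\;S,
\]
so $\inv_\cV(A)=S$, contradicting the standing assumption $S\neq\inv_\cV(A)$. This completes the proof.

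The only mildly delicate step is the jump from $V\cap(P\setminus E)\neq\emptyset$ to $V\subseteq P\setminus E$, which is precisely where $\cV'$-compatibility (rather than mere $\cV$-compatibility) of $P\setminus E$ is used; without Proposition \ref{prop:P_minus_E_convex_vcomp} supplying $\cV'$-compatibility, the argument would fail. Everything else is routine set-theoretic manipulation and a monotonicity observation about $\inv$.
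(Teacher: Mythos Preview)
Your argument is correct and follows essentially the same route as the paper's proof: use Proposition~\ref{prop:P_minus_E_convex_vcomp} (applied under $\cV'$) to get that $P\setminus E$ is convex and $\cV'$-compatible, deduce $A\subseteq P\setminus E$ by minimality, and then use monotonicity of $\inv_{\cV}$ to contradict $S\neq\inv_{\cV}(A)$. The only cosmetic difference is that you derive $V\subseteq P\setminus E$ explicitly before invoking minimality, whereas the paper uses the earlier observation that $A$ is in fact the minimal convex $\cV'$-compatible set containing $S$ alone.
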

\begin{proof}
    Suppose that $S \neq\inv_{\cV}(A)$ and there exists an index pair, $(P,E)$, for both $S$ under $\cV$ and some $S'$ under $\cV'$.
    By Proposition \ref{prop:P_minus_E_convex_vcomp}, the set $P\setminus E$ must be convex and $\cV'$-compatible.
    Since $S \subseteq P\setminus E$ and $A$ is the smallest convex and $\cV'$-compatible set containing $S$, it follows that $A \subseteq P\setminus E$.
    Hence, $\inv_{\cV} (A) \subseteq \inv_{\cV} (P\setminus E)$. By assumption, $S \subsetneq \inv_{\cV}(A)$. Thus, $S \subsetneq \inv_{\cV}( P \setminus E)$. This implies that $(P,E)$ is not an index pair for $S$, a contradiction. 
\end{proof}

\subsection{Characterizing Tracked Isolated Invariant Sets}

Step \ref{protocol-step:continuation} of the Tracking Protocol provides an avenue for tracking an isolated invariant set across a sequence of atomic rearrangements. In this subsection, we justify the canonicity of the selected isolated invariant set in Step \ref{protocol-step:continuation} of the Tracking Protocol. First, we observe that we always have an inclusion. Theorem \ref{thm:subset-thm} follows directly from the next two results

\begin{proposition}
Let $S$ be an isolated invariant set under $\cV$, and let $S'$ denote an isolated invariant set under $\cV'$ that is obtained by applying the Tracking Protocol. 
If $S'$ is obtained via Steps \ref{protocol-step:refinement}, \ref{protocol-step:coarsen-within}, or \ref{protocol-step:coarsen-outside}, then $S' \subseteq S$. 
\end{proposition}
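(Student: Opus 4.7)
The plan is to proceed case by case and observe that in each of the three listed branches of the Tracking Protocol, the rule defines $S'$ by exactly the same formula, namely $S' := \inv_{\cV'}(S)$. Specifically, Step \ref{protocol-step:refinement} sets $S' := \inv_{\cV'}(S)$; Step \ref{protocol-step:coarsen-within} also sets $S' := \inv_{\cV'}(S)$; and Step \ref{protocol-step:coarsen-outside} sets $S' := \inv_{\cV'}(S)$ (where in that last case the protocol furthermore records the equality $\inv_{\cV'}(S) = S$, but this is not needed for the inclusion). Thus across all three branches we have the uniform equality $S' = \inv_{\cV'}(S)$.

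Next, I would invoke the definition of the invariant part directly. By definition, $\inv_{\cV'}(A)$ is the set of simplices $\sigma \in A$ for which there exists an essential solution $\rho \colon \mathbb{Z} \to A$ with $\rho(i) = \sigma$ for some $i$. In particular, every element of $\inv_{\cV'}(A)$ lies in $A$, so $\inv_{\cV'}(A) \subseteq A$ for any subset $A \subseteq K$ and any multivector field $\cV'$ on $K$.

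Specializing to $A = S$ yields $S' = \inv_{\cV'}(S) \subseteq S$, which is the desired inclusion. There is no real obstacle here: the statement is a direct unfolding of the protocol's definitions together with the tautological inclusion $\inv_{\cV'}(\cdot) \subseteq (\cdot)$. The only mild subtlety is to verify that the protocol really does assign $S' = \inv_{\cV'}(S)$ in each of the three specified steps, which is immediate from reading off the definitions. Conceptually, the content of the proposition is that these three branches correspond to passing to a combinatorially finer resolution (refinement) or to a coarsening that either completely contains or completely misses $S$; in either situation, the updated invariant set is obtained by restricting the dynamics to $S$ itself, so it cannot add new simplices.
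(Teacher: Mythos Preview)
Your proof is correct and follows exactly the same approach as the paper: observe that in each of Steps~\ref{protocol-step:refinement}, \ref{protocol-step:coarsen-within}, and \ref{protocol-step:coarsen-outside} the protocol sets $S' := \inv_{\cV'}(S)$, and then use the tautology $\inv_{\cV'}(S) \subseteq S$. The paper's version is simply more terse.
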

\begin{proof}
    In Steps \ref{protocol-step:refinement}, \ref{protocol-step:coarsen-within}, and \ref{protocol-step:coarsen-outside}, $S'$ is obtained by taking $S' := \inv_{\cV'}(S)$. By definition, $\inv_{\cV'}(S) \subseteq S$, so $S' \subseteq S$. 
\end{proof}

\begin{proposition}
Let $S$ be an isolated invariant set under $\cV$, and let $S'$ denote an isolated invariant set under $\cV'$ that is obtained via applying the Tracking Protocol. 
If $S'$ is obtained via Step \ref{protocol-step:coarsen-between} then $S \subseteq S'$ or $S'\subseteq S$.
\label{prop:isolated-invariant-set-inclusion}
\end{proposition}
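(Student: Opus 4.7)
The plan is to split into cases based on how the merged multivector $V$ meets $S'$. Since $S'$ is an isolated invariant set under $\cV'$, Proposition~\ref{prop:isolated_equiv_convex_vcomp} yields that $S'$ is $\cV'$-compatible, and because $V \in \cV'$ this forces either $V \subseteq S'$ or $V \cap S' = \emptyset$. Symmetrically, $S$ is $\cV$-compatible; the merged multivector decomposes as $V = V_1 \cup V_2$ with $V_1, V_2 \in \cV$, and the step's hypothesis $V \cap S \neq \emptyset$ together with $V \not\subseteq S$ forces exactly one of $V_1, V_2$ to lie in $S$---say $V_1 \subseteq S$ and $V_2 \cap S = \emptyset$. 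I will prove $S' \subseteq S$ in the first case and $S \subseteq S'$ in the second.

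The easier direction is $V \cap S' = \emptyset$. Given any $\sigma \in S'$, choose an essential $\cV'$-solution $\rho \colon \mathbb{Z} \to A$ with $\rho(0) = \sigma$. Shifts of $\rho$ witness each $\rho(j) \in S'$, so $\rho$ never enters $V$. Off $V$, the multivector assignments of $\cV$ and $\cV'$ agree (hence so do $F_\cV$, $F_{\cV'}$, and criticality), so $\rho$ is also an essential $\cV$-solution in $A$. Since $\inv_\cV(A) = S$ by the hypothesis of Step~\ref{protocol-step:coarsen-between}, this yields $\sigma \in S$.

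For the case $V \subseteq S'$, I want to show $S \subseteq S'$. If $\sigma \in V \cap S = V_1$ then $\sigma \in V \subseteq S'$. Otherwise $\sigma \in S \setminus V$, and I pick an essential $\cV$-solution $\rho \colon \mathbb{Z} \to A$ through $\sigma$. A crucial preliminary observation is that $\rho$ actually takes values in $S$: shifts of $\rho$ are $\cV$-essential, so $\rho(j) \in \inv_\cV(A) = S$ for every $j$, and combined with $V \cap S = V_1$ this means $\rho$ visits $V$ only through $V_1$, never $V_2$. Such a $\rho$ is automatically a $\cV'$-solution because $F_\cV \subseteq F_{\cV'}$. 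The only possible obstruction to its $\cV'$-essentiality occurs at indices $j$ with $\rho(j) \in V_1$ where $V$ is not $\cV'$-critical; when $V$ is $\cV'$-critical the condition at such indices is vacuous, and the remaining indices are handled exactly as in the easier case, using that $\cV$ and $\cV'$ agree off $V$.

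The main obstacle is the remaining subcase: $V$ is not $\cV'$-critical and $\rho$ becomes ``trapped'' in $V$ on one side, i.e.\ $\rho(j) \in V_1$ for all $j \geq J$ (possible only if $V_1$ is $\cV$-critical, by $\cV$-essentiality of $\rho$). My strategy is to splice. Using $V \subseteq S' = \inv_{\cV'}(A)$, pick an essential $\cV'$-solution $\tau \colon \mathbb{Z} \to A$ with $\tau(0) = \rho(J)$; since $V$ is not $\cV'$-critical, $\tau$ must exit $V$ at some positive index. Replace $\rho|_{[J, \infty)}$ by an appropriately shifted copy of $\tau$, and do the mirror construction on the backward side if $\rho$ is also trapped there. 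Continuity at the splice point uses $F_\cV \subseteq F_{\cV'}$, and $\cV'$-essentiality at each index follows either from $\rho$'s $\cV$-essentiality together with criticality invariance off $V$, or directly from $\tau$'s $\cV'$-essentiality. The spliced path is therefore an essential $\cV'$-solution in $A$ through $\sigma$, giving $\sigma \in S'$.
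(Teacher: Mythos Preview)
Your proof is correct and follows essentially the same strategy as the paper. The paper argues by first proving the contrapositive claim ``$S \not\subseteq S' \Rightarrow V \cap S' = \emptyset$'' (via the same splicing-with-an-essential-$\cV'$-solution argument you use) and then deducing $S' \subseteq S$ in that case, whereas you organize the proof as an explicit case split on whether $V \subseteq S'$ or $V \cap S' = \emptyset$ and argue each inclusion directly; the core mechanism---detect that failure of $\cV'$-essentiality forces $\rho$ to be eventually trapped in $V$, then splice with an essential $\cV'$-solution through a point of $V \cap S'$---is identical.
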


\begin{proof}
    First, we claim that if $S \not\subseteq S'$, and if $V$ is the unique merged multivector $V \in \cV' \setminus \cV$, then $V \cap S' = \emptyset$. 
    
    If $S \not\subseteq S'$, then there exists a $\sigma \in S \setminus S'$. Because $\sigma \in S$, there exists an essential solution $\rho \; : \; \mathbb{Z} \to A$ under $\cV$ where $\rho(0) = \sigma$. It is easy to check that because $\cV'$ is an atomic coarsening of $\cV$, we have that for every $\tau \in K$, $F_{\cV}(\tau) \subseteq F_{\cV'}(\tau)$. Hence, $\rho$ must be a solution under $\cV'$. But, since $\sigma \in S \setminus S'$, it follows that $\rho$ is not an essential solution. 
    
    Without loss of generality, we assume that there exists a $j > 0$ such that for all $i_1, i_2 \geq j$, we have that $[\rho(i_1)]_{\cV'} = [\rho(i_2)]_{\cV'}$. Because $\rho$ is an essential solution under $\cV$, and $|\cV' \setminus \cV| = 1$, it follows that $[\rho(i_1)]_{\cV'} = [\rho(i_2)]_{\cV'} = V$. Hence, $V$ must not be critical, as if it were, then $\rho$ would be an essential solution under $\cV'$. 
    
    Now, aiming for a contradiction, assume there exists a $\tau \in V \cap S'$. Then there exists an essential solution $\rho' \; : \; \mathbb{Z} \to A$ under $\cV'$ where $\rho'(j+1) = \tau$. Thus, because $\rho(j) \in V$, we can obtain a new solution $r \; : \; \mathbb{Z} \to S'$ where $r(i) = \rho(i)$ if $i \leq j$ and $r(i) = \rho'(i)$ if $i > j$. We have shown that $\rho(j) \in V$, and by assumption, $\tau = \rho'(j+1)$ has the property that $\tau \in V$. Hence, because $\rho$ is a solution and $\rho'$ is an essential solution, we have the property that for all $k$, there exists an $i > k$ where $[ r(i) ]_{\cV'} \neq [r(k)]_{\cV'}$. We can use the same construction to guarantee that there exists an $i < k$ where $[ r(i) ]_{\cV'} \neq [r(k)]_{\cV'}$. Hence, $r$ is an essential solution under $\cV'$, where $r(0) = \sigma$. But this implies that $\sigma \in S'$, a contradiction. Hence, there can exist no such $\tau$, so $V \cap S' = \emptyset$. 
    
    Thus, we have the property that $V \cap S' = \emptyset$. Let $\rho \; : \; \mathbb{Z} \to S'$ denote an essential solution under $\cV'$. Observe that for each $i$, $[\rho(i)]_{\cV'} = [\rho(i)]_{\cV}$. Ergo, $\rho$ is also an essential solution under $\cV$. Hence, $S' \subseteq S$. 
\end{proof}.

\begin{theorem}
    If $S'$ is obtained by applying Step \ref{protocol-step:continuation} of the Tracking Protocol to $S$, then we have $S\subseteq S'$ or $S'\subseteq S$.
    \label{thm:subset-thm}
\end{theorem}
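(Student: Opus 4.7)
The plan is to observe that Step \ref{protocol-step:continuation} of the Tracking Protocol consists of exactly four constructive sub-steps (\ref{protocol-step:refinement}, \ref{protocol-step:coarsen-within}, \ref{protocol-step:coarsen-outside}, \ref{protocol-step:coarsen-between}), together with the terminal sub-step \ref{protocol-step:continuation-impossible} in which no $S'$ is produced. Hence the theorem reduces to a short case analysis over which of the four sub-steps yielded $S'$, and in each case the conclusion is already delivered by one of the two propositions stated immediately before the theorem.

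First, I would handle sub-steps \ref{protocol-step:refinement}, \ref{protocol-step:coarsen-within}, and \ref{protocol-step:coarsen-outside} simultaneously. In each of these three sub-steps the protocol sets $S' := \inv_{\cV'}(S)$, so the preceding proposition (the one whose proof just notes that $\inv_{\cV'}(S)\subseteq S$ by definition of the invariant part) immediately gives $S'\subseteq S$, which trivially implies the disjunction $S\subseteq S'$ or $S'\subseteq S$.

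Next, for sub-step \ref{protocol-step:coarsen-between}, the protocol requires that the merged multivector $V$ satisfies $V\cap S\neq\emptyset$ and $V\not\subseteq S$ and that $\inv_{\cV}(A)=S$, and then sets $S':=\inv_{\cV'}(A)$ with $A=\langle S\cup V\rangle_{\cV'}$. This is precisely the hypothesis of Proposition \ref{prop:isolated-invariant-set-inclusion}, which directly yields $S\subseteq S'$ or $S'\subseteq S$.

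Combining the two cases exhausts every sub-step of Step \ref{protocol-step:continuation} that actually produces an $S'$, and in all of them we obtain the desired disjunction. There is no real obstacle here, because the substantive work has already been done in Proposition \ref{prop:isolated-invariant-set-inclusion}; the only thing to be careful about is to make sure the case list is complete and to note explicitly that sub-step \ref{protocol-step:continuation-impossible} does not yield an $S'$ and is therefore vacuously excluded from the statement.
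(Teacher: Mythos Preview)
Your proposal is correct and matches the paper's approach exactly: the paper states that Theorem~\ref{thm:subset-thm} ``follows directly from the next two results,'' namely the proposition handling Steps~\ref{protocol-step:refinement}, \ref{protocol-step:coarsen-within}, \ref{protocol-step:coarsen-outside} and Proposition~\ref{prop:isolated-invariant-set-inclusion} handling Step~\ref{protocol-step:coarsen-between}, which is precisely the case split you describe.
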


Furthermore, isolated invariant sets chosen by Step \ref{protocol-step:continuation} minimize the perturbation to $S$ in terms of the number of inclusions.

\begin{proposition}
Let $S$ be an isolated invariant set under $\cV$, and let $S'$ be an isolated invariant set under $\cV'$ that is obtained by applying Step \ref{protocol-step:continuation} of the Tracking Protocol
to $S$. 
If $S''$ is any isolated invariant set under $\cV'$ that shares a common index pair with $S$, then $S' \subseteq S''$.
Moreover, if $S''\subseteq S$, then $S'=S''$.
\label{prop:min-perturbation}
\end{proposition}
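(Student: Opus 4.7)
The plan is to exploit two observations that hold uniformly across Step \ref{protocol-step:continuation} of the Tracking Protocol: in every substep, $S'$ equals $\inv_{\cV'}(A')$ for a convex and $\cV'$-compatible set $A'$ satisfying $S \subseteq A'$ (namely $A' = S$ in Steps \ref{protocol-step:refinement}, \ref{protocol-step:coarsen-within}, \ref{protocol-step:coarsen-outside}, and $A' = \langle S \cup V\rangle_{\cV'}$ in Step \ref{protocol-step:coarsen-between}); and any common index pair $(P,E)$ for $S$ and $S''$ forces $A' \subseteq P\setminus E$. The first claim will then follow by monotonicity of $\inv_{\cV'}$, and the moreover clause from a short solution-chasing argument.

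For the inclusion $S' \subseteq S''$, fix a common index pair $(P,E)$, so that $\inv_{\cV}(P\setminus E) = S$ and $\inv_{\cV'}(P\setminus E) = S''$. By Proposition \ref{prop:P_minus_E_convex_vcomp}, $P\setminus E$ is convex and $\cV'$-compatible, and $S \subseteq P\setminus E$ automatically. In Steps \ref{protocol-step:refinement}--\ref{protocol-step:coarsen-outside} this immediately gives $A' = S \subseteq P\setminus E$. In Step \ref{protocol-step:coarsen-between}, the hypothesis $V\cap S \neq \emptyset$ together with $\cV'$-compatibility of $P\setminus E$ forces the merged multivector $V$ to lie entirely inside $P\setminus E$, so $S\cup V \subseteq P\setminus E$; the minimality of $\langle S\cup V\rangle_{\cV'}$ among convex, $\cV'$-compatible sets containing $S\cup V$ then yields $A' \subseteq P\setminus E$. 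Applying monotonicity of $\inv_{\cV'}$ gives $S' = \inv_{\cV'}(A') \subseteq \inv_{\cV'}(P\setminus E) = S''$.

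For the moreover clause, assume further that $S'' \subseteq S$. Since $S \subseteq A'$ in every case, we have $S'' \subseteq A'$. Now each $\sigma \in S''$ admits an essential $\cV'$-solution $\rho : \mathbb{Z} \to S''$ (because $S''$ is invariant under $\cV'$), and this solution automatically takes values in $A'$, so $\sigma \in \inv_{\cV'}(A') = S'$. Hence $S'' \subseteq S'$, and combined with $S' \subseteq S''$ proved above we conclude $S' = S''$.

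The only delicate step I anticipate is showing $V \subseteq P\setminus E$ in Case \ref{protocol-step:coarsen-between}: this relies on the dichotomy imposed by $\cV'$-compatibility — a multivector of $\cV'$ is either entirely contained in a $\cV'$-compatible set or disjoint from it — with the observation $V\cap S \subseteq V\cap (P\setminus E)$ excluding the disjoint option. Everything else is bookkeeping, drawing on the characterization of $S'$ in each substep, the monotonicity of $\inv_{\cV'}$, and the universal property of $\langle S\cup V\rangle_{\cV'}$ established earlier.
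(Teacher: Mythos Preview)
Your argument is correct and follows essentially the same route as the paper: both proofs reduce to showing $A' \subseteq P\setminus E$ (using convexity and $\cV'$-compatibility of $P\setminus E$ together with the minimality of $\langle S\cup V\rangle_{\cV'}$ in case \ref{protocol-step:coarsen-between}) and then apply monotonicity of $\inv_{\cV'}$ for both the inclusion $S'\subseteq S''$ and the converse $S''\subseteq S'$ under the extra hypothesis $S''\subseteq S$. The paper states the moreover clause as $S'' = \inv_{\cV'} S'' \subseteq \inv_{\cV'} A' = S'$, which is exactly your solution-chasing argument compressed into one line.
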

\begin{proof}
    Let $(P,E)$ be a common index pair for $S$ under $\cV$ and $S''$ under $\cV'$.
    Consider Steps \ref{protocol-step:refinement}, \ref{protocol-step:coarsen-within}, and \ref{protocol-step:coarsen-outside} where $S'=\inv_{\cV'}(S)$.
    By definition, $S\subseteq P\setminus E$, and it follows that $S'=\inv_{\cV'} S\subseteq \inv_{\cV'} (P\setminus E)= S''$.
    Moreover, if $S''\subseteq S$, we get that $S''=\inv_{\cV'} S''\subseteq \inv_{\cV'} S=S'$.
    Thus, $S'=S''$.
    
    To prove the property for Step \ref{protocol-step:coarsen-between}, notice that by Proposition \ref{prop:P_minus_E_convex_vcomp}, and the fact that $A := \langle S \cup V \rangle_{\cV'}$ is the minimal convex and $\cV'$-compatible set containing $S$, we get $A\subseteq P\setminus E$.
    Therefore $S'=\inv_{\cV'} (A)\subseteq \inv_{\cV'} (P\setminus E)=S''$.
    Similarly, if $S''\subseteq S\subseteq A$, then we get $S''=\inv_{\cV'}S''\subseteq \inv_{\cV'} A = S'$.
    Thus, $S'=S''$.
\end{proof}
\section{Tracking via Persistence}
\label{sec:persistence}

In the previous section, we explicated Step \ref{protocol-step:continuation} of the protocol, which uses continuation to track an isolated invariant set across a changing multivector field. In this section, we first place continuation in the persistence framework by showing how to translate the idea of combinatorial continuation into a zigzag filtration \cite{zigzag,DW22} that does not introduce spurious information. Then, we use the persistence view of continuation to justify Step \ref{protocol-step:persistence} of the Tracking Protocol, which permits us to capture changes in an isolated invariant set when no continuation is possible. In particular, it permits us to track an isolated invariant set even in the presence of a bifurcation that changes the Conley index. If the isolated invariant set that we are tracking collides, or \emph{merges}, with another isolated invariant set, then we follow the newly formed isolated invariant set, and persistence captures which aspects of our original isolated invariant set persist into the new one. Conversely, if an isolated invariant set splits, we track the smallest isolated invariant set that contains all of the child invariant sets. We begin by reviewing some results on computing the persistence of the Conley index from \cite{DMS2020}.

\subsection{Conley Index Persistence}

In \cite{DMS2020}, the authors were interested in computing the changing Conley index across a sequence of isolated invariant sets. A naive approach to computing the persistence of the Conley index is, if given two index pairs $(P_1,E_1)$ and $(P_2,E_2)$, to take the intersection of the index pairs to obtain the zigzag filtration $(P_1, E_1) \supseteq (P_1 \cap P_2, E_1 \cap E_2) \subseteq (P_2, E_2)$. However, the intersection of index pairs is generally not an index pair, and as a consequence, the barcode associated with this zigzag filtration does not capture a changing Conley index. In addition, due to the fact that $(P_1 \cap P_2, E_1 \cap E_2)$ need not be an index pair, the barcode is frequently erratic. We include an example in Figure \ref{fig:intersection-not-index-pair}.
\begin{figure}[hbp]
\centering
\begin{tabular}{ccc}
  \includegraphics[height=32mm]{fig/small-index-pair-no-orbit.pdf}&
  \includegraphics[height=32mm]{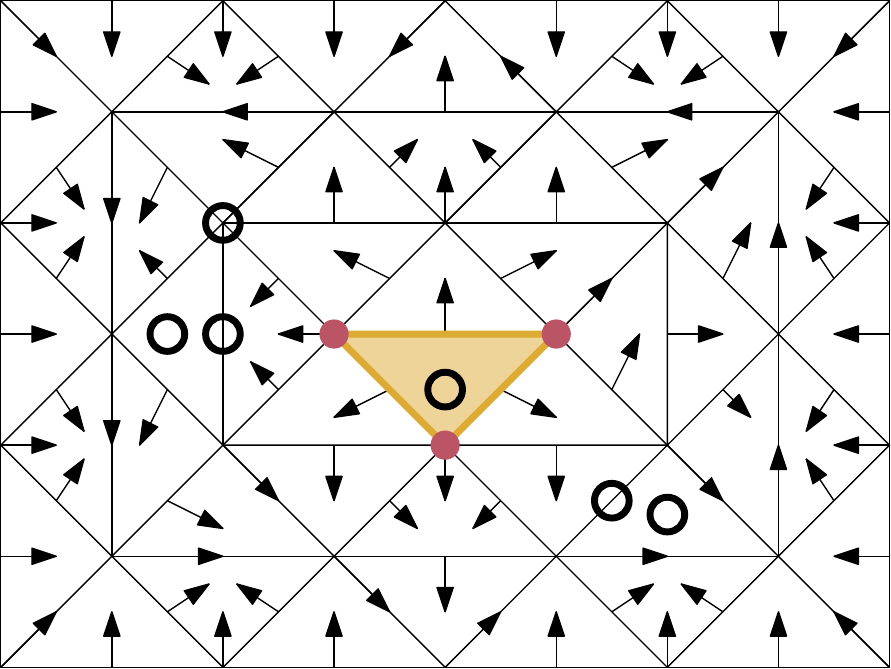}&
  \includegraphics[height=32mm]{fig/medium-index-pair-no-orbit.pdf}\\
  \multicolumn{3} {l} {
  \begin{tikzpicture}[outer sep = 0, inner sep = 0]
  \draw[fill=dark-gray,outer sep=0, inner sep=0] (0.0,-0.3) -- (4.45,-0.3) -- (4.45,0.1) -- (0.0,0.1) -- cycle;
  \node[align=left,color=white] at (1.3,-0.1) {Dimension: 2};
  \draw[fill=dark-gray,outer sep=0, inner sep=0] (9.18,-0.3) -- (13.65,-0.3) -- (13.65,0.1) -- (9.18,0.1) -- cycle;
  \node[align=left,color=white] at (10.48,-0.1) {Dimension: 2};
  \end{tikzpicture} }\\
  \multicolumn{3} {c} {
  \begin{tikzpicture}[outer sep = 0, inner sep = 0]
  \draw[fill=light-gray,outer sep=0, inner sep=0] (4.43,-0.3) -- (9.09,-0.3) -- (9.09,0.1) -- (4.43,0.1) -- cycle;
  \node[align=left,color=black] at (5.73,-0.1) {Dimension: 1};
  \end{tikzpicture} }\\
  \multicolumn{3} {c} {
  \begin{tikzpicture}[outer sep = 0, inner sep = 0]
  \draw[fill=light-gray,outer sep=0, inner sep=0] (4.43,-0.3) -- (9.09,-0.3) -- (9.09,0.1) -- (4.43,0.1) -- cycle;
  \node[align=left,color=black] at (5.73,-0.1) {Dimension: 1};
  \end{tikzpicture} }
\end{tabular}
\caption{
All three images depict the same multivector field, which includes a yellow repelling fixed point (triangle, marked with a black circle). (left) and (right) depict two different index pairs, $(P_l,E_l)$ and $(P_r,E_r)$, for the repelling fixed point: $P_l$ and $P_r$ consist of  yellow and red simplices and $E_l$ and $E_r$ consist of red simplices. The intersection $(P_l \cap P_r, E_l \cap E_r)$ is depicted in the middle. Check that this pair is not an index pair because if $e$ denotes a yellow edge, then $F_{\cV}(e) \not\subseteq P_l \cap P_r$. 
Beneath, we depict the barcode that is associated with the zigzag filtration $(P_l,E_l) \supseteq (P_l \cap P_r, E_l \cap E_r ) \subseteq (P_r, E_r)$. Because $(P_l, E_l)$ and $(P_r, E_r)$ are both index pairs for the same repelling fixed point, we would expect the barcode to be full. However, as $(P_l \cap P_r, E_l \cap E_r)$ is not an index pair for the repelling fixed point, its relative homology can change drastically.}
\label{fig:intersection-not-index-pair}
\end{figure} 

To handle this problem, the authors in \cite{DMS2020} introduced a special type of index pair.

\begin{definition}
Let $S$ denote an isolated invariant set, and let $N$ denote an isolating set for $S$. The pair of closed sets $(P,E)$ is an \emph{index pair for $S$ in $N$} if all of the following hold:
\begin{enumerate}
    \item $F_{\cV}(P \setminus E) \subseteq N$
    \item $F_{\cV}(E) \cap N \subseteq E$
    \item $F_{\cV}(P) \cap N \subseteq P$
    \item $S = \inv_{\cV}(P \setminus E)$
\end{enumerate}
\label{def:index_pair_in_N}
\end{definition}

Every index pair in $N$ is also an index pair in the sense of Definition \ref{def:index_pair} (see \cite{DMS2020}). The canonical choice of an index pair for $S$ can be used to obtain a canonical index pair for $S$ in $N$ via the \emph{push forward}. The push forward of a set $A$ in $N$, denoted $\pf_\cV(A,N)$, is given by the set of simplices $\sigma \in N$ for which there exists a path $\rho \; : \; \mathbb{Z}_{[0,n]} \to N$ in $\cV$ where $\rho(0) \in A$ and $\rho(n) = \sigma$. 
If $\cV$ is clear from the context we write $\pf(A,N)$.

\begin{theorem}{ \cite[Theorem 15]{DMS2020} }
Let $S$ be an isolated invariant set under $\cV$, and let $N$ be an isolating set for $S$. The pair $(\pf_{\cV}( \cl(S), N ), \pf_{\cV}( \mo(S), N ) )$ is an index pair in $N$ for $S$. 
\label{thm:push_forward_is_index_pair}
\end{theorem}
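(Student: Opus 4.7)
My plan is to verify the four conditions of Definition \ref{def:index_pair_in_N} for $P := \pf_{\cV}(\cl(S), N)$ and $E := \pf_{\cV}(\mo(S), N)$. Both sets are closed: if $\sigma \in P$ and $\tau \leq \sigma$, then $\tau \in N$ (since the isolating set $N$ is closed) and $\tau \in F_{\cV}(\sigma)$, so appending one step to any reaching path witnesses $\tau \in P$; the same argument handles $E$.

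Conditions (2) and (3) follow directly from the definition of push forward: given $\sigma \in F_{\cV}(\tau) \cap N$ for some $\tau \in P$ (resp.\ $\tau \in E$), concatenating a reaching path with the one-step extension to $\sigma$ stays in $N$, so $\sigma \in P$ (resp.\ $\sigma \in E$).

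Condition (1), $F_{\cV}(P \setminus E) \subseteq N$, is the delicate part. Fix $\sigma \in P \setminus E$ and $\tau \in F_{\cV}(\sigma)$; closedness of $N$ handles the case $\tau \leq \sigma$, so assume $\tau \in [\sigma]_{\cV}$. The plan is to show $\sigma \in S$, which, combined with $\cV$-compatibility of $S$ (Proposition \ref{prop:isolated_equiv_convex_vcomp}), yields $\tau \in [\sigma]_{\cV} \subseteq S$ and hence $\tau \in F_{\cV}(S) \subseteq N$. To prove $\sigma \in S$, pick a path $\sigma_0,\ldots,\sigma_m = \sigma$ in $N$ with $\sigma_0 \in \cl(S) = S \cup \mo(S)$. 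If $\sigma_0 \in \mo(S)$, the entire path lies in $\pf_{\cV}(\mo(S),N) = E$, contradicting $\sigma \notin E$. So $\sigma_0 \in S$; now if a first index $k$ at which the path leaves $S$ exists, then $\sigma_{k-1} \in S$ and $\sigma_k \notin S$, but $\cV$-compatibility of $S$ rules out the multivector-move case $\sigma_k \in [\sigma_{k-1}]_{\cV}$, leaving $\sigma_k \in \cl(\sigma_{k-1}) \setminus S \subseteq \mo(S)$, and once more the tail of the path places $\sigma \in E$, a contradiction. Hence the path stays in $S$ and $\sigma \in S$.

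Condition (4) reduces to the lemma $S \cap E = \emptyset$. Suppose $\sigma \in S \cap E$ with witnessing path $\tau_0,\ldots,\tau_m = \sigma$ in $N$, $\tau_0 \in \mo(S)$. By definition of $\mo(S)$ there exists $\tau' \in S$ with $\tau_0 \leq \tau'$, and $\tau_0 \in \cl(\tau') \subseteq F_{\cV}(\tau') \subseteq N$. Prepending $\tau'$ yields a path in $N$ from $\tau' \in S$ to $\sigma \in S$ that visits $\tau_0 \notin S$, contradicting isolation of $S$ by $N$. Given this lemma, every essential solution in $S$ lies in $S \subseteq P \setminus E$, giving $S \subseteq \inv_{\cV}(P \setminus E)$; for the reverse inclusion, the same case analysis as in (1) shows that any essential solution $\rho$ valued in $P \setminus E$ must satisfy $\rho(i) \in S$ for all $i$, since $\rho(i) \in \mo(S)$ would place $\rho(i)$ in $E$ directly, and $\rho(i) \notin \cl(S)$ would force the first departure from $\cl(S)$ on a witnessing path to land in $\mo(S)$, again pushing $\rho(i)$ into $E$. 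The main obstacle is that conditions (1) and (4) depend on the same interlocking use of $\cV$-compatibility, closedness of $N$, and the path-based definition of push forward, so the principal risk is bookkeeping errors in these interleaved case splits rather than any single hard step.
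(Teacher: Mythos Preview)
The paper does not prove this theorem; it is quoted verbatim from \cite[Theorem 15]{DMS2020} and used as a black box. Hence there is no in-paper proof to compare against, and your direct verification of the four conditions of Definition~\ref{def:index_pair_in_N} is an appropriate approach.

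Your argument is correct in substance. A few remarks. First, your proof of condition~(1) actually establishes the stronger fact $P\setminus E\subseteq S$: you show that any witnessing path from $\cl(S)$ to a point of $P\setminus E$ must begin in $S$ and never leave $S$. Combined with the lemma $S\cap E=\emptyset$ and the trivial inclusion $S\subseteq P$, this gives $P\setminus E=S$, whence condition~(4) is immediate via $\inv_{\cV}(P\setminus E)=\inv_{\cV}(S)=S$. Your separate discussion of the reverse inclusion in~(4) is therefore redundant, and the phrasing there (``first departure from $\cl(S)$'' rather than from $S$) is slightly off, though harmless given what you already proved. Second, in the lemma $S\cap E=\emptyset$ you write ``$\tau_0\in\cl(\tau')\subseteq F_{\cV}(\tau')\subseteq N$''; this justifies $\tau_0\in N$, but the prepended path also needs $\tau'\in N$, which follows from $\tau'\in S\subseteq F_{\cV}(S)\subseteq N$. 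That is the only place where a sentence is missing. Otherwise the bookkeeping is clean: closedness of $P$ and $E$, the inclusion $E\subseteq P$ (from $\mo(S)\subseteq\cl(S)$), and conditions~(2) and~(3) all follow as you describe from the path-extension property of the push forward.
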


Index pairs in $N$ are particularly useful because, unlike standard index pairs, their intersection is guaranteed to be an index pair. For two multivector fields $\cV_1$ and $\cV_2$, an intermediate multivector field is $\cV_1 \overline{\cap} \cV_2$, where $\cV_1 \overline{\cap} \cV_2 := \{V_1 \cap V_2 \; | \; V_1 \in \cV_1, \, V_2 \in \cV_2 \}$.

\begin{theorem}{ \cite[Theorem 10]{DMS2020} } 
Let $(P_1,E_1)$ and $(P_2,E_2)$ denote index pairs in $N$ under $\cV_1$ and $\cV_2$, respectively. The pair $(P_1 \cap P_2, E_1 \cap E_2)$ is an index pair in $N$ under $\cV_1 \overline{\cap} \cV_2$. 
\label{thm:intersecting_index_pairs}
\end{theorem}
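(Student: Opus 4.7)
The plan is to fix a candidate isolated invariant set $S$ and then verify each of the four defining conditions of Definition \ref{def:index_pair_in_N} for the pair $(P,E) := (P_1 \cap P_2, E_1 \cap E_2)$ under the multivector field $\cV_1 \overline{\cap} \cV_2$. The central structural observation is that $\cV_1 \overline{\cap} \cV_2$ is a common refinement of $\cV_1$ and $\cV_2$: every multivector of $\cV_1 \overline{\cap} \cV_2$ is contained in exactly one multivector of $\cV_1$ and in exactly one multivector of $\cV_2$. This yields the pointwise inclusion $F_{\cV_1 \overline{\cap} \cV_2}(\sigma) \subseteq F_{\cV_i}(\sigma)$ for each $\sigma \in K$ and $i \in \{1,2\}$, and hence the analogous inclusion of images for arbitrary $A \subseteq K$. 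This single refinement inclusion does most of the work.

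Both $P$ and $E$ are closed as intersections of closed sets. For condition 2, monotonicity and the refinement inclusion give $F_{\cV_1 \overline{\cap} \cV_2}(E) \cap N \subseteq F_{\cV_1}(E_1) \cap N \subseteq E_1$; by the symmetric argument the image also sits in $E_2$, so it sits in $E_1 \cap E_2 = E$. Condition 3 is handled identically for $P$ using $F_{\cV_i}(P_i) \cap N \subseteq P_i$. For condition 1, I would use the set-theoretic inclusion $P \setminus E \subseteq (P_1 \setminus E_1) \cup (P_2 \setminus E_2)$ (a point missing from $E_1 \cap E_2$ must miss at least one of $E_1, E_2$), so the refinement inclusion yields $F_{\cV_1 \overline{\cap} \cV_2}(P \setminus E) \subseteq F_{\cV_1}(P_1 \setminus E_1) \cup F_{\cV_2}(P_2 \setminus E_2) \subseteq N$.

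The substantive part is condition 4, which requires an isolated invariant set $S$ under $\cV_1 \overline{\cap} \cV_2$, isolated by $N$, satisfying $S = \inv_{\cV_1 \overline{\cap} \cV_2}(P \setminus E)$. I would simply define $S$ to be this invariant part. To see that $S$ is isolated it suffices, by Proposition \ref{prop:invariant_part_of_convex_vcomp}, to argue that $P \setminus E$ is convex and $(\cV_1 \overline{\cap} \cV_2)$-compatible. Convexity follows from Proposition \ref{prop:P_minus_E_convex_vcomp} applied to each $(P_i, E_i)$ together with the fact that intersections of convex subsets of a poset are convex. Compatibility follows because each multivector $W \in \cV_1 \overline{\cap} \cV_2$ lies inside a single $\cV_i$-multivector $W_i$, and since $P_i \setminus E_i$ is $\cV_i$-compatible, $W_i$ sits entirely inside or entirely outside $P_i \setminus E_i$; hence $W$ sits entirely inside or entirely outside $P \setminus E$.

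The main obstacle is verifying that $N$ itself isolates $S$ under $\cV_1 \overline{\cap} \cV_2$, not merely $\cl(S)$. Concretely, given a path $\rho \colon \mathbb{Z}_{[0,n]} \to N$ with $\rho(0), \rho(n) \in S$, I must show $\rho(\mathbb{Z}_{[0,n]}) \subseteq S$. The refinement inclusion makes $\rho$ simultaneously a path under $\cV_1$ and under $\cV_2$. Induction along $\rho$ using condition 3 of each $(P_i, E_i)$ confines $\rho$ to $P_1 \cap P_2 = P$; analogously, condition 2 applied to each $(P_i, E_i)$ shows that once $\rho$ enters $E = E_1 \cap E_2$ it is trapped there, contradicting $\rho(n) \in S \subseteq P \setminus E$, so $\rho$ stays in $P \setminus E$. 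To promote this to $\rho(k) \in S = \inv_{\cV_1 \overline{\cap} \cV_2}(P \setminus E)$, I would concatenate $\rho$ with the two-sided essential solutions in $P \setminus E$ witnessing $\rho(0), \rho(n) \in S$ to produce an essential solution through $\rho(k)$ contained in $P \setminus E$; verifying essentiality at interior indices requires reaching sufficiently far into the appended solutions to see a multivector change, which is always possible by their essentiality. The remaining half of the isolation condition, $F_{\cV_1 \overline{\cap} \cV_2}(S) \subseteq N$, is immediate from the computation for condition 1.
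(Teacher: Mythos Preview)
The paper does not prove this theorem itself; it is imported as \cite[Theorem~10]{DMS2020} and used without argument, so there is nothing on the paper's side to compare against. Your plan---verify the four conditions of Definition~\ref{def:index_pair_in_N} using the refinement inclusion $F_{\cV_1 \overline{\cap} \cV_2}(\sigma) \subseteq F_{\cV_i}(\sigma)$---is the natural route, and your treatment of conditions~1--3 and of the isolation of $S$ by $N$ via the path-trapping argument is correct.

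There is one real slip in your handling of condition~4. Both your convexity and your compatibility arguments for $P \setminus E$ tacitly identify $(P_1 \cap P_2) \setminus (E_1 \cap E_2)$ with $(P_1 \setminus E_1) \cap (P_2 \setminus E_2)$; these sets differ in general (the latter equals $(P_1 \cap P_2) \setminus (E_1 \cup E_2)$), so neither ``intersections of convex sets are convex'' nor ``$W_i$ lies entirely inside or outside $P_i \setminus E_i$ for each $i$'' delivers the stated conclusion for $P \setminus E$. Convexity is easily recovered from the closedness of $P$ and $E$ alone. Compatibility also holds, but proving it requires the index-pair-in-$N$ conditions themselves (for instance, condition~3 for $(P_i,E_i)$ together with $P_j \setminus E_j \subseteq N$ rules out a multivector of $\cV_1 \overline{\cap} \cV_2$ straddling $P \setminus E$), not just Proposition~\ref{prop:P_minus_E_convex_vcomp}. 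That said, you do not actually need either property: your subsequent path argument already establishes directly that $N$ isolates $S := \inv_{\cV_1 \overline{\cap} \cV_2}(P \setminus E)$, which is exactly what the preamble of Definition~\ref{def:index_pair_in_N} demands, so the appeal to Proposition~\ref{prop:invariant_part_of_convex_vcomp} can simply be dropped.
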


Hence, given an index pair $(P_1,E_1)$ in $N$ under $\cV_1$ and an index pair $(P_2,E_2)$ in $N$ under $\cV_2$, we can obtain a relative zigzag filtration where each pair is an index pair under a different multivector field. This zigzag filtration permits capturing a changing Conley index via persistence. We include an example in Figure \ref{fig:intersection-correct}. 

\begin{figure}[htbp]
\centering
\begin{tabular}{ccc}
  \includegraphics[height=32mm]{fig/large-index-pair-opaque.pdf}&
  \includegraphics[height=32mm]{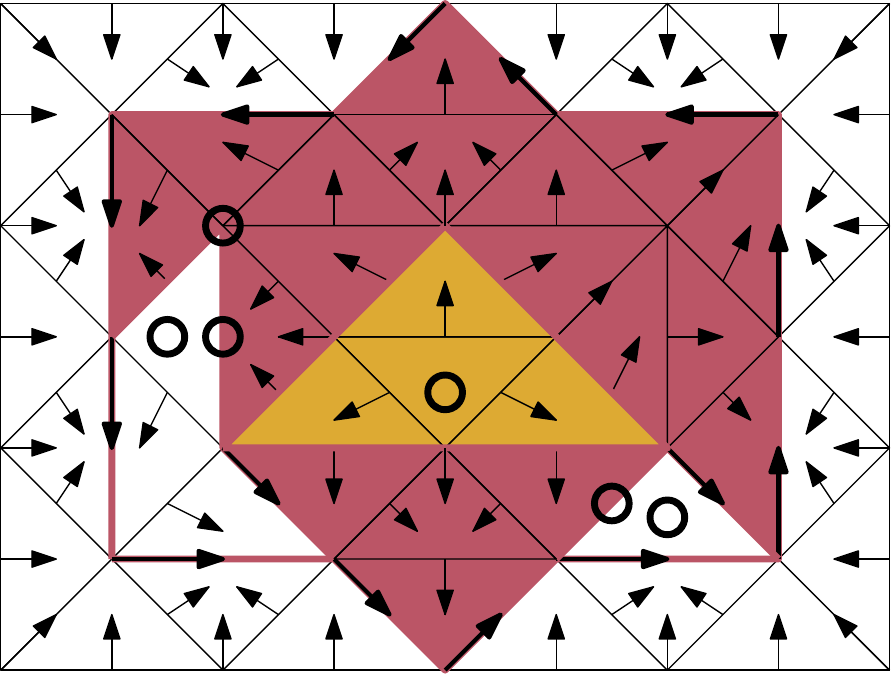}&
  \includegraphics[height=32mm]{fig/resilient-first-index-pair-opaque.pdf}\\
  \multicolumn{3} {l} {
  \begin{tikzpicture}[outer sep = 0, inner sep = 0]
  \draw[fill=dark-gray,outer sep=0, inner sep=0] (0.0,-0.3) -- (13.52,-0.3) -- (13.52,0.1) -- (0.0,0.1) -- cycle;
  \node[align=left,color=white] at (1.3,-0.1) {Dimension: 2};
  \end{tikzpicture} }\\
\end{tabular}
\caption{All three images depict the same multivector field in Figure \ref{fig:intersection-not-index-pair}. The left and the right images depict an index pair in $N$, where $N$ is the entire rectangle. The color convention is the same as in Figure \ref{fig:intersection-not-index-pair}: red and yellow simplices are in $P$, and red simplices are in $E$. Unlike Figure \ref{fig:intersection-not-index-pair}, Theorem \ref{thm:intersecting_index_pairs} implies that that the intersected pair in the middle is an index pair. The persistence barcode, capturing the static Conley index, is depicted below the three images. }
\label{fig:intersection-correct}
\end{figure} 

\subsection{From continuation to filtration}

Now, we show that a continuation of an isolated invariant set $S_1$ to $S_{n+1}$ can be expressed in terms of persistence.
Namely, a corresponding sequence of connecting index pairs $(P_1,E_1)$, $(P_2,E_2)$, \ldots, $(P_n,E_n)$ can be turned into a \emph{zigzag filtration}, 
that is a sequence of pairs $\{(A_i,B_i)\}_{i=1}^{m}$ such that either $(A_{i},B_{i})\subseteq(A_{i+1},B_{i+1})$ or $(A_{i+1},B_{i+1})\subseteq (A_{i},B_{i})$. Ideally, each $(A_i,B_i)$ would be an index pair for some $S_j$ from the initial continuation so as to not introduce spurious invariant sets or Conley indices.
A connecting index pair $(P_i, E_i)$ is an index pair for both $S_i$ under $\cV_i$ and for $S_{i+1}$ under $\cV_{i+1}$.
Thus, $(P_i,E_i)$ and $(P_{i+1},E_{i+1})$ are both index pairs for $S_{i+1}$ under $\cV_{i+1}$.
We will construct auxiliary index pairs for $S_{i+1}$ and then relate $(P_i,E_i)$ and $(P_{i+1},E_{i+1})$ with a zigzag filtration using these auxiliary pairs.
If we can connect all adjacent pairs $(P_i,E_i)$ and $(P_{i+1},E_{i+1})$ with a zigzag filtration, then we can concatenate all of these zigzag filtrations and transform a sequence of connecting index pairs into a larger zigzag filtration. 
The following results are important for achieving this. 

\begin{proposition}{\cite[Proposition 5.2]{LKMW2020}} 
Let $(P,E)$ denote an index pair for $S$. The set $P$ is an isolating set for $S$.
\label{prop:p_isolates}
\end{proposition}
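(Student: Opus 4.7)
The plan is to verify the three requirements in the definition of an isolating set: that $P$ is closed, that $F_\cV(S)\subseteq P$, and that every path in $P$ beginning and ending in $S$ remains in $S$. The first is immediate since $P$ is closed by the definition of an index pair. For the second, note that $S=\inv_\cV(P\setminus E)\subseteq P\setminus E$, so condition \ref{it:index_pair_i} of Definition \ref{def:index_pair} gives $F_\cV(S)\subseteq F_\cV(P\setminus E)\subseteq P$.

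The substantive content is the path-trapping condition. Fix a path $\rho:\mathbb{Z}_{[0,n]}\to P$ with $\rho(0),\rho(n)\in S$. I would first show that $\rho$ actually stays in $P\setminus E$: using condition \ref{it:index_pair_ii}, if $\rho(i)\in E$ then $\rho(i+1)\in F_\cV(\rho(i))\cap P\subseteq F_\cV(E)\cap P\subseteq E$, so by induction $\rho(j)\in E$ for every $j\geq i$. Since $\rho(n)\in S\subseteq P\setminus E$, no such $i$ exists, hence $\rho(\mathbb{Z}_{[0,n]})\subseteq P\setminus E$.

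The main step is to show $\rho(i^*)\in S$ for each $i^*\in\{0,\dots,n\}$, i.e., to exhibit an essential solution in $P\setminus E$ passing through $\rho(i^*)$. Since $\rho(0),\rho(n)\in S=\inv_\cV(P\setminus E)$, there exist essential solutions $\alpha,\beta:\mathbb{Z}\to P\setminus E$ with $\alpha(0)=\rho(0)$ and $\beta(0)=\rho(n)$. I would define $\sigma:\mathbb{Z}\to P\setminus E$ by concatenation:
\[
\sigma(k)=\begin{cases}\alpha(k+i^*),& k+i^*\leq 0,\\ \rho(k+i^*),& 0\leq k+i^*\leq n,\\ \beta(k+i^*-n),& k+i^*\geq n,\end{cases}
\]
which is well-defined at the boundaries, satisfies $\sigma(0)=\rho(i^*)$, has image in $P\setminus E$, and is a solution by a routine check on each of the three branches and the two transition indices.

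The main obstacle is verifying that $\sigma$ is essential, since the finite middle segment $\rho$ carries no intrinsic essentiality guarantee. The key observation I would establish is: for every $k$ with $[\sigma(k)]_\cV=:m$ noncritical, some $i^-<k$ satisfies $[\sigma(i^-)]_\cV\neq m$. Arguing by contradiction, if $[\sigma(k')]_\cV=m$ for all $k'\leq k$, then for $k'$ sufficiently negative $\sigma(k')=\alpha(k'+i^*)$, which forces $[\alpha(j)]_\cV=m$ for all $j\leq 0$. But essentiality of $\alpha$ applied at $j=0$ (using that $m$ is noncritical) yields some index less than $0$ where $\alpha$ leaves $m$, a contradiction. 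The symmetric argument with $\beta$ produces $i^+>k$ with $[\sigma(i^+)]_\cV\neq m$. Thus $\sigma$ is essential, so $\rho(i^*)\in S$, completing the proof.
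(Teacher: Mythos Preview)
The paper does not actually prove this proposition: it is quoted without proof as \cite[Proposition 5.2]{LKMW2020}, so there is no argument in the paper against which to compare yours.

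Your proof is correct. The three ingredients---closedness of $P$, the containment $F_\cV(S)\subseteq P$ via condition~\ref{it:index_pair_i}, and the forward-absorbing property of $E$ from condition~\ref{it:index_pair_ii} to force $\rho(\mathbb{Z}_{[0,n]})\subseteq P\setminus E$---are exactly what is needed, and the concatenation of $\alpha$, $\rho$, and $\beta$ is the standard way to exhibit the essential solution through an arbitrary point of the path. One small wording issue: in the contradiction step you say you apply essentiality of $\alpha$ ``at $j=0$'', but strictly speaking you only know $[\alpha(j)]_\cV=m$ for all $j\leq\min(k+i^*,0)$; when $k<-i^*$ this upper endpoint is $k+i^*<0$ rather than $0$. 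The fix is trivial---apply essentiality of $\alpha$ at any index in that left-infinite ray rather than specifically at $0$---and the argument goes through unchanged.
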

\begin{proposition}
Let $(P,E)$ denote an index pair for $S$ under $\cV$. The pair $(P,E)$ is an index pair for $S$ in $P$ under $\cV$. 
\label{prop:index_pair_in_p}
\end{proposition}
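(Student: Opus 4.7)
The plan is a direct unpacking of Definition \ref{def:index_pair_in_N} with $N := P$. By Proposition \ref{prop:p_isolates}, the set $P$ is already known to be an isolating set for $S$, so the hypothesis ``$N$ is an isolating set for $S$'' in Definition \ref{def:index_pair_in_N} is satisfied for free. What remains is to check the four numbered conditions in that definition.

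I would verify them in order. Condition (1) requires $F_{\cV}(P \setminus E) \subseteq P$, which is exactly item (\ref{it:index_pair_i}) in Definition \ref{def:index_pair}. Condition (2) requires $F_{\cV}(E) \cap P \subseteq E$, which is exactly item (\ref{it:index_pair_ii}) in Definition \ref{def:index_pair}. Condition (4) requires $S = \inv_{\cV}(P \setminus E)$, which is item (\ref{it:index_pair_iii}) in Definition \ref{def:index_pair}. So three of the four conditions are immediate from the hypothesis that $(P,E)$ is an index pair.

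The only condition that is not literally one of the clauses of Definition \ref{def:index_pair} is condition (3): $F_{\cV}(P) \cap P \subseteq P$. But this is a tautology, since any subset of $P$ is a subset of $P$; in particular $F_{\cV}(P) \cap P \subseteq P$ holds without using any property of the multivector field at all. I do not foresee a real obstacle here — the proposition is essentially a bookkeeping remark that every index pair can be viewed as an index pair in its own $P$, and the ``extra'' axiom required by the ``in $N$'' notion trivializes when $N = P$. The proof will therefore be a short two or three lines verifying the four bullets against Definitions \ref{def:index_pair} and \ref{def:index_pair_in_N}, with an invocation of Proposition \ref{prop:p_isolates} to justify that $P$ is a legitimate choice of isolating set.
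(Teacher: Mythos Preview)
Your proposal is correct and matches the paper's own proof essentially line for line: both verify the four conditions of Definition \ref{def:index_pair_in_N} with $N=P$, noting that conditions (1), (2), (4) are literally the clauses of Definition \ref{def:index_pair} and that condition (3), $F_{\cV}(P)\cap P\subseteq P$, is a tautology. Your explicit appeal to Proposition \ref{prop:p_isolates} to confirm that $P$ is an isolating set is a small addition the paper leaves implicit, but it is a reasonable point to make.
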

\begin{proof}
First, we observe that $S = \inv_{\cV}(P \setminus E)$ because $(P,E)$ is an index pair. In addition, $F_{\cV}(P) \cap N = F_{\cV}(P) \cap P \subseteq P$ by definition. Since $(P,E)$ is an index pair, it has the property that $F_{\cV}( P \setminus E ) \subseteq P$. In the case of index pairs in $N$, we require that $F_{\cV}( P \setminus E ) \subseteq N = P$, so this case is immediately satisfied. Finally, because $(P,E)$ is an index pair, $F_{\cV}(E) \cap P \subseteq E$. Thus, $F_{\cV}(E) \cap N = F_{\cV}(E) \cap P \subseteq E$. 
\end{proof}
\begin{theorem}
Let $(P_1,E_1)$ and $(P_2,E_2)$ denote index pairs for $S$ in $N$ under $\cV$. The pair $(P_1 \cap P_2, E_1 \cap E_2)$ is an index pair for $S$ in $N$ under $\cV$.
\label{thm:index_pairs_for_same_invariant_set}
\end{theorem}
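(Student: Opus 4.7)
The plan is to verify the four conditions of Definition \ref{def:index_pair_in_N} for the pair $(P_1\cap P_2,\,E_1\cap E_2)$. Closedness is immediate since each $P_i$ and $E_i$ is closed and intersection preserves closedness, and $E_1\cap E_2\subseteq P_1\cap P_2$ follows from $E_i\subseteq P_i$.

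Conditions 2 and 3 are easy consequences of the monotonicity of $F_{\cV}$: since $F_{\cV}(A\cap B)\subseteq F_{\cV}(A)\cap F_{\cV}(B)$, we get
\[F_{\cV}(E_1\cap E_2)\cap N \subseteq \bigl(F_{\cV}(E_1)\cap N\bigr)\cap\bigl(F_{\cV}(E_2)\cap N\bigr)\subseteq E_1\cap E_2,\]
and similarly $F_{\cV}(P_1\cap P_2)\cap N\subseteq P_1\cap P_2$. For condition 1, I would argue pointwise: any $\sigma\in(P_1\cap P_2)\setminus(E_1\cap E_2)$ lies outside $E_1$ or outside $E_2$; assuming the former, $\sigma\in P_1\setminus E_1$ and condition 1 for $(P_1,E_1)$ in $N$ yields $F_{\cV}(\sigma)\subseteq N$, and the symmetric case is identical.

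The main obstacle is condition 4, that is, showing $S=\inv_{\cV}\bigl((P_1\cap P_2)\setminus(E_1\cap E_2)\bigr)$. The inclusion $S\subseteq \inv_{\cV}\bigl((P_1\cap P_2)\setminus(E_1\cap E_2)\bigr)$ is easy: from $S=\inv_{\cV}(P_i\setminus E_i)$ for $i=1,2$ we have $S\subseteq(P_1\setminus E_1)\cap(P_2\setminus E_2)\subseteq (P_1\cap P_2)\setminus(E_1\cap E_2)$, and $S$ is invariant, so $S=\inv_{\cV}(S)$ is contained in the invariant part of any superset. The reverse inclusion is where the isolating properties of $N$ are crucial. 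Given an essential solution $\rho\colon\mathbb{Z}\to(P_1\cap P_2)\setminus(E_1\cap E_2)$, I would introduce the sets $I_k:=\{i\in\mathbb{Z}\mid \rho(i)\in E_k\}$ for $k=1,2$ and show each $I_k$ is upward-closed. This uses condition 2 of an index pair in $N$: if $\rho(i)\in E_k$, then $\rho(i+1)\in F_{\cV}(E_k)\cap N\subseteq E_k$, since $\rho(i+1)\in P_1\cap P_2\subseteq N$. Because $\rho(i)\notin E_1\cap E_2$ for any $i$, the sets $I_1$ and $I_2$ are disjoint, and two disjoint upward-closed subsets of $\mathbb{Z}$ cannot both be nonempty (taking the max of two points would force a common element). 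Hence $\rho$ avoids $E_1$ entirely or avoids $E_2$ entirely; say the latter. Then $\rho\colon\mathbb{Z}\to P_2\setminus E_2$ is still essential, giving $\rho(k)\in \inv_{\cV}(P_2\setminus E_2)=S$, which finishes the argument.
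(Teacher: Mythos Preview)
Your proof is correct and follows essentially the same approach as the paper. The paper dispatches conditions 1--3 by invoking Theorem~\ref{thm:intersecting_index_pairs} with $\cV_1=\cV_2=\cV$ (so $\cV\overline{\cap}\cV=\cV$), whereas you verify them directly; and for condition~4 the paper sets up a contradiction (assuming $\sigma\notin S$, then showing $\rho$ must hit both $E_1$ and $E_2$, hence eventually lands in $E_1\cap E_2$) while you argue directly via the disjoint upward-closed sets $I_1,I_2$---but the core mechanism, namely that once $\rho$ enters $E_k$ it stays in $E_k$ by condition~2 of Definition~\ref{def:index_pair_in_N}, is identical.
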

\begin{proof}
By Theorem \ref{thm:intersecting_index_pairs}, the pair $(P_1 \cap P_2, E_1 \cap E_2)$ is an index pair in $N$ under $\cV$. Hence, it is sufficient to show that $\inv_{\cV}( (P_1 \cap P_2) \setminus (E_1 \cap E_2) ) = S$. Furthermore, $S = \inv_{\cV}( P_1 \setminus E_1 )$ and $S = \inv_{\cV}( P_2 \setminus E_2 )$. Hence, $S \subseteq P_1 \setminus E_1$ and $S \subseteq P_2 \setminus E_2$. Ergo, $S \subseteq P_1 \cap P_2$. In addition, $S \cap E_1 = \emptyset$ and $S \cap E_2 = \emptyset$. Hence, $S \cap (E_1 \cap E_2) = \emptyset$, so it follows that $S \subseteq (P_1 \cap P_2) \setminus (E_1 \cap E_2)$. Thus, $S \subseteq \inv_{\cV}( (P_1 \cap P_2) \setminus (E_1 \cap E_2) )$. Ergo, it remains to be shown that $\inv_{\cV}( (P_1 \cap P_2) \setminus (E_1 \cap E_2) ) \subseteq S$. 

Aiming for a contradiction, assume that there exists an $\sigma \in \inv_{\cV}( (P_1 \cap P_2) \setminus (E_1 \cap E_2) ) \setminus S$. Equivalently, there exists an essential solution $\rho \; : \; \mathbb{Z} \to (P_1 \cap P_2) \setminus (E_1 \cap E_2)$ where $\rho(0) = \sigma$. Because $\rho(\mathbb{Z}) \subseteq P_1 \cap P_2$, but $\rho(\mathbb{Z}) \not\subseteq \inv(P_1 \setminus E_1)$, there must exist an $i_1 \in \mathbb{Z}$ where $\rho(i_1) \in E_1$. Similarly, there must exist an $i_2 \in \mathbb{Z}$ where $\rho(i_2) \in E_2$. 

We claim that for all $i \geq i_1$, $\rho(i) \in E_1$. To contradict, assume that this is not the case. Then there must exist some first $j > i$ where $\rho(j) \not\in E_1$. However, $\rho(j-1) \in E_1$. By definition of an index pair in $N$, if $x \in E_1$ and $y \in F_{\cV}(x) \cap N$, then $y \in E_1$. Hence, since $\rho(j) \not\in E_1$, it follows that $\rho(j) \not\in N$. But by assumption, $\rho(j) \in P_1 \cap P_2 \subseteq N$. Therefore, there is no such $j$, so for all $i > i_1$, we have that $i \in E_1$. The same argument implies that for all $i \geq i_2$, we have that $\rho(i) \in E_2$. 

Thus, it follows that for all $i \geq \max\{i_1, i_2\}$, $\rho(i) \in E_1 \cap E_2$. Ergo, $\rho(\mathbb{Z}) \not\subseteq (P_1 \cap P_2) \setminus (E_1 \cap E_2)$, a contradiction. Hence, no such $\rho$ can exist, which implies that no such $\sigma$ can exist. Thus, $S = \inv_{\cV}( (P_1 \cap P_2) \setminus (E_1 \cap E_2) )$. 
\end{proof}

Now, we move to using these results to translate a sequence of connecting index pairs $\{(P_i,E_i)\}_{i=1}^{n}$ into a zigzag filtration. For $1 < i \leq n$, $(P_{i-1},E_{i-1})$ and $(P_i,E_i)$ are both index pairs for $S_i$. By Proposition \ref{prop:closure-mouth}, the pair $(\cl(S_i),\mo(S_i))$ is an index pair for $S_i$. Hence, a natural approach is to find a zigzag filtration that connects $(P_i,E_i)$ with $(\cl(S_i), \mo(S_i))$ and a zigzag filtration that connects $(P_{i-1},E_{i-1})$ with $(\cl(S_i),\mo(S_i))$. If we can find such zigzag filtrations for all $S_i$, then we can concatenate all of them and obtain a zigzag filtration that connects $(P_1,E_1)$ with $(P_n,E_n)$. We depict the resulting zigzag filtration in Equation \ref{eqn:resulting_continuation}.

\begin{equation}
    (P_1,E_1) \supseteq \ldots \supseteq (\cl(S_2),\mo(S_2)) \subseteq \ldots \subseteq (P_2, E_2) 
        \supseteq \ldots \supseteq (\cl(S_3),\mo(S_3)) \subseteq \ldots (P_{n}, E_{n}) 
    \label{eqn:resulting_continuation}
\end{equation}

We connect $(\cl(S_{i}),\mo(S_i))$ with $(P_i,E_i)$, and $(P_{i-1},E_{i-1})$ connects with $(\cl(S_i),\mo(S_i))$ symmetrically. By Proposition \ref{prop:p_isolates}, $P_i$ is an isolating set for $S_i$. Thus, by Theorem \ref{thm:push_forward_is_index_pair}, $(\pf_{\cV_i}(\cl(S_i), P_i), \pf_{\cV_i}(\mo(S_i), P_i))$ is an index pair for $S_i$ in $P_i$. Proposition \ref{prop:index_pair_in_p} implies that $(P_i,E_i)$ is an index pair for $S_i$ in $P_i$. By Theorem \ref{thm:index_pairs_for_same_invariant_set}, $(P_i \cap \pf_{\cV_i}(\cl(S_i),P_i), E_i \cap \pf_{\cV_i}(\mo(S_i),P_i))$ is an index pair for $S_i$ in $P_i$. Hence, we get the following zigzag filtration:
\begin{multline}
    (\cl(S_i),\mo(S_i)) \subseteq (\pf_{\cV_i}(\cl(S_i), P_i), \pf_{\cV_i}(\mo(S_i), P_i)) \supseteq \\(P_i \cap \pf_{\cV_i}(\cl(S_i),P_i), E_i \cap \pf_{\cV_i}(\mo(S_i),P_i))  \subseteq (P_i,E_i)
    \label{eqn:continuation-filtration}
\end{multline}
Every pair in Equation \ref{eqn:continuation-filtration} is an index pair for $S_i$ under $\cV_i$. Thus, we do not introduce any spurious invariant sets. We can concatenate these filtrations to get Equation \ref{eqn:resulting_continuation}. 

We now analyze the barcode obtained for \ref{eqn:resulting_continuation}. Our main result is Theorem \ref{thm:continuation-barcode}, and it follows immediately from the next two results. 

\begin{lemma}\cite[Lemma 5.10]{LKMW2020}\label{lem:semi-equal_isomorphism}
    Let $(P,E)\subseteq(P',E')$ be index pairs for isolated invariant set $S$ under $\cV$ such that either $P=P'$ or $E=E'$.
    Then the inclusion $i:(P,E)\hookrightarrow(P',E')$ induces an isomorphism in homology.
\end{lemma}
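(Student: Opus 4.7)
The plan is to reduce, via the long exact sequence of a triple, to showing that a single relative homology vanishes in each case, and then to extract this vanishing from the dynamical content of the index pair axioms.

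First I would split into the two hypothesized cases. If $E = E'$, then $E \subseteq P \subseteq P'$ is a triple, and the long exact sequence
\[
\cdots \to H_k(P,E) \xrightarrow{i_*} H_k(P',E) \to H_k(P',P) \to H_{k-1}(P,E) \to \cdots
\]
reduces the claim to proving $H_k(P',P) = 0$ for all $k$. Dually, when $P = P'$, the triple $E \subseteq E' \subseteq P$ yields
\[
\cdots \to H_k(E',E) \to H_k(P,E) \xrightarrow{i_*} H_k(P,E') \to H_{k-1}(E',E) \to \cdots,
\]
so it suffices to prove $H_k(E',E) = 0$.

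Next I would use the fact that both $(P,E)$ and $(P',E')$ are index pairs for the same $S$. In the $E = E'$ case we have $\inv_{\cV}(P \setminus E) = S = \inv_{\cV}(P' \setminus E)$, so no simplex in $P' \setminus P$ lies in an essential solution contained in $P' \setminus E$. Combined with the forward-invariance $F_{\cV}(P' \setminus E) \subseteq P'$ and the mouth-absorption $F_{\cV}(E) \cap P \subseteq E$ from Definition \ref{def:index_pair}, every $F_{\cV}$-trajectory starting in $P' \setminus P$ must enter $E$ in finitely many steps. This ``transience'' lets me order the cells of $P' \setminus P$ by hitting time and construct an acyclic partial matching in the discrete-Morse sense along $F_{\cV}$-trajectories, leaving no critical cells in $P' \setminus P$, so the relative complex is chain-contractible and $H_k(P',P) = 0$. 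The $P = P'$ case is symmetric: the axiom $F_{\cV}(E) \cap P \subseteq E$ forces every simplex of $E' \setminus E$ to be absorbed by $E$, and the analogous matching gives $H_k(E',E) = 0$.

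The main obstacle will be producing the acyclic matching so that pairings genuinely respect the face poset, since $F_{\cV}$ mixes closure relations with multivector identifications and the matching must descend to the simplicial chain complex. A cleaner route I would try first is to lift both $(P,E)$ and $(P',E')$ to index pairs in a common isolating set using Proposition \ref{prop:p_isolates} and Proposition \ref{prop:index_pair_in_p}, intersect them via Theorem \ref{thm:index_pairs_for_same_invariant_set}, and thereby factor the original inclusion through a zig-zag of inclusions of index pairs for $S$ whose induced maps on homology can each be identified with the Conley index; a diagram chase then forces $i_*$ itself to be an isomorphism.
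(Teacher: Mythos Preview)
The paper does not supply its own proof of this lemma; it is quoted directly from \cite[Lemma~5.10]{LKMW2020}. So there is no in-paper argument to compare against, and your proposal has to stand on its own.

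Your reduction via the long exact sequence of the triple is correct and is the natural opening move. The difficulty lies entirely in the vanishing of $H_*(P',P)$ (respectively $H_*(E',E)$), and here both of your routes have real gaps.

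In the first route, the dynamical claim that ``every $F_{\cV}$-trajectory starting in $P'\setminus P$ must enter $E$ in finitely many steps'' is false as stated: a simplex $\sigma\in P'\setminus P$ can have faces in $P\setminus E$, and since $\cl(\sigma)\subseteq F_{\cV}(\sigma)$ a trajectory may pass directly from $P'\setminus P$ into $P\setminus E$ without touching $E$. What \emph{is} true, and what actually drives the vanishing, is that $\inv_{\cV}(P'\setminus P)=\emptyset$: any essential solution in $P'\setminus P$ would lie in $\inv_{\cV}(P'\setminus E)=S\subseteq P$, a contradiction. Consequently every multivector contained in $P'\setminus P$ is regular, i.e.\ has $H_*(\cl V,\mo V)=0$. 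The argument then proceeds by filtering $P'\setminus P$ multivector by multivector in an order that keeps each partial union closed and applying excision at each step. Your ``hitting-time acyclic matching'' does not construct this, and you never explain how pairings built from $F_{\cV}$-moves---which mix face relations with intra-multivector jumps---can be made to respect the simplicial boundary map.

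Your fallback route is circular. Propositions~\ref{prop:p_isolates} and~\ref{prop:index_pair_in_p} make $(P',E')$ an index pair in $N=P'$, but they do \emph{not} make $(P,E)$ one: the additional condition $F_{\cV}(P)\cap P'\subseteq P$ of Definition~\ref{def:index_pair_in_N} is not among your hypotheses. Even if it held, the intersection of $(P,E)$ with $(P',E')$ is just $(P,E)$ again, so Theorem~\ref{thm:index_pairs_for_same_invariant_set} yields nothing new. And the decisive step---``each induced map can be identified with the Conley index, hence a diagram chase finishes''---presupposes that inclusions between index pairs for $S$ induce isomorphisms. That is precisely Theorem~\ref{thm:inclusion-induces-isomorphism}, whose proof in this paper invokes the very lemma you are trying to establish.
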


\begin{theorem}
If $(P,E)$ and $(P',E')$ are index pairs for $S$ where $(P',E') \subseteq (P,E)$, then the inclusion induces an isomorphism in the Conley indices. 
\label{thm:inclusion-induces-isomorphism}
\end{theorem}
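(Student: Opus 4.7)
The plan is to factor the inclusion $(P',E')\hookrightarrow(P,E)$ through two intermediate index pairs for $S$, applying Lemma \ref{lem:semi-equal_isomorphism} at the outer two steps and a standard simplicial excision at the middle step. Set $E^*:=E\cap P'$ and consider the chain
\[
(P',E') \;\subseteq\; (P',E^*) \;\subseteq\; (P'\cup E,\,E) \;\subseteq\; (P,E),
\]
noting that $E^*$ and $P'\cup E$ are closed and that each inclusion respects the containment of the $E$-component in the $P$-component. The composition of the three inclusions is the given inclusion $(P',E')\hookrightarrow(P,E)$.

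I would first verify that both $(P',E^*)$ and $(P'\cup E,E)$ are index pairs for $S$ under $\cV$. The key set identity $P'\setminus E^*=P'\setminus E=(P'\cup E)\setminus E$ reduces condition \ref{it:index_pair_i} of Definition \ref{def:index_pair} for both candidates to checking $F_\cV(P'\setminus E)\subseteq P'\subseteq P'\cup E$; this follows from $E'\subseteq E$ (whence $P'\setminus E\subseteq P'\setminus E'$) together with the fact that $(P',E')$ is an index pair. Condition \ref{it:index_pair_ii} uses $P'\cup E\subseteq P$ together with $F_\cV(E)\cap P\subseteq E$ from $(P,E)$. For condition \ref{it:index_pair_iii}, $S\subseteq P'\setminus E$ since $S\subseteq P'$ and $S\cap E=\emptyset$, while $P'\setminus E\subseteq P'\setminus E'$ gives $\inv_\cV(P'\setminus E)\subseteq\inv_\cV(P'\setminus E')=S$.

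With the intermediate pairs confirmed, the first inclusion keeps the $P$-component fixed and the third inclusion keeps the $E$-component fixed, so Lemma \ref{lem:semi-equal_isomorphism} yields an isomorphism on relative homology at each of these steps. For the middle inclusion $(P',E^*)\hookrightarrow(P'\cup E,E)$, both components move, and I would invoke simplicial excision: because $P'$ and $E$ are closed subcomplexes of $P'\cup E$ with intersection $P'\cap E=E^*$, one has $C_*(P'\cup E)=C_*(P')+C_*(E)$, hence the inclusion-induced chain map $C_*(P')/C_*(E^*)\to C_*(P'\cup E)/C_*(E)$ is an isomorphism, and so is the map it induces on homology. Composing the three isomorphisms gives the theorem.

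The main obstacle I expect is the bookkeeping needed to certify that the intermediates are genuine index pairs for $S$. The most naive intermediate $(P,E')$ does not work: the index-pair axioms constrain only $F_\cV(E)\cap P$ rather than all of $F_\cV(E)$, so multivectors through $E\setminus E'$ may leave $P$ and violate condition \ref{it:index_pair_i} of Definition \ref{def:index_pair}. Routing the chain through $P'\cup E$ and $E\cap P'$ is precisely what keeps every pair a valid index pair while making each successive inclusion either of the ``constant-coordinate'' type required by Lemma \ref{lem:semi-equal_isomorphism} or of the excisive form handled by simplicial excision.
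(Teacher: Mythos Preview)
Your proof is correct and follows essentially the same route as the paper: factor the inclusion through the two auxiliary index pairs $(P',E\cap P')$ and $(P'\cup E,E)$, apply Lemma~\ref{lem:semi-equal_isomorphism} to the outer inclusions (where one component is held fixed) and simplicial excision to the middle inclusion. Your primed/unprimed labeling is in fact more consistent with the theorem statement than the paper's own write-up, which appears to have swapped the roles of $(P,E)$ and $(P',E')$ inside the proof.
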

\begin{proof}
    First, we claim that $(P,E'\cap P)$ and $(P\cup E', E')$ are index pairs for $S$.
    Note that $P\setminus (E'\cap P)= P\setminus E' = (P\cup E')\setminus E'$.
    Thus, $F_\cV(P\setminus(E'))\subseteq F_\cV(P\setminus E)\subseteq P\subseteq P\cup E'$ proves condition \ref{it:index_pair_i} for both pairs.
    Similarly, $S=\inv (P\setminus E)\subseteq \inv (P\setminus E') \subseteq \inv(P'\setminus E')=S$ proves condition \ref{it:index_pair_iii}.
    To see condition \ref{it:index_pair_ii} we have
        $F_\cV(E'\cap P)\cap P\subseteq F_\cV(E')\cap P\subseteq E'\cap P$
        and
        $F_\cV(E')\cap (P\cup E')\subseteq (P'\cap E')\cap (P\cup E')=E'$
        for $(P,E'\cap P)$ and $(P\cup E', E')$, respectively.
    
    Now, consider the following sequence of inclusions
    \begin{center}
      \begin{tikzcd}
        (P,E) \arrow[hookrightarrow, r, "i"]  & 
        (P,E'\cap P)\arrow[hookrightarrow, r, "j"]  &
        (P\cup E', E') \arrow[hookrightarrow, r, "k"]  &
        (P',E').
      \end{tikzcd}
    \end{center}
    Maps $i$ and $k$ induces isomorphisms by Lemma \ref{lem:semi-equal_isomorphism}.
    Similarly, map $j$ induces an isomorphism by the simplicial excision theorem \cite[Theorem 9.1]{Munkres1984}.
\end{proof}

\begin{theorem}
For every $k\geq 0$, the $k$-dimensional barcode of a connecting sequence of index pairs $\{(P_i,E_i)\}_{i=1}^n$ has $m$
bars $[1,n]$ if $\dim H_k(P_1,E_1)=m$.
\label{thm:continuation-barcode}
\end{theorem}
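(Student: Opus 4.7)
The plan is to show that every inclusion appearing in the big zigzag filtration \ref{eqn:resulting_continuation} links two index pairs for a common isolated invariant set, and that each such inclusion is comparable (one side contained in the other). Once that is in place, Theorem \ref{thm:inclusion-induces-isomorphism} immediately gives that each arrow of the zigzag induces an isomorphism on $H_k$, and from there the barcode description follows from elementary persistent-homology bookkeeping.

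First I would walk through the local block \ref{eqn:continuation-filtration}. By Proposition \ref{prop:closure-mouth}, the outermost left pair $(\cl(S_i),\mo(S_i))$ is an index pair for $S_i$; by Proposition \ref{prop:p_isolates}, $P_i$ is an isolating set for $S_i$, so Theorem \ref{thm:push_forward_is_index_pair} yields that the push-forward pair is an index pair for $S_i$ in $P_i$; Proposition \ref{prop:index_pair_in_p} makes $(P_i,E_i)$ an index pair for $S_i$ in $P_i$; and Theorem \ref{thm:index_pairs_for_same_invariant_set} makes the intersection pair an index pair for $S_i$ in $P_i$. Every pair of index pairs in $N$ is an index pair in the ordinary sense, so all four pairs in \ref{eqn:continuation-filtration} are index pairs for the same $S_i$. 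Concatenating these local blocks and using that $(P_i,E_i)$ is by hypothesis also an index pair for $S_{i+1}$, one obtains that every consecutive pair in \ref{eqn:resulting_continuation} consists of two index pairs for a single isolated invariant set, and the inclusion goes strictly one way.

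Next, I would apply Theorem \ref{thm:inclusion-induces-isomorphism} arrow-by-arrow: each forward (or backward) inclusion in the zigzag filtration induces an isomorphism $H_k \to H_k$. The standard structure theorem for zigzag modules then says that a zigzag module in which every connecting map is an isomorphism decomposes into copies of the interval module $[1,n]$, one for each generator of the vector space at any single vertex. In particular, taking the leftmost vertex $(P_1,E_1)$ with $\dim H_k(P_1,E_1)=m$, we get exactly $m$ bars, and each of them necessarily has the form $[1,n]$.

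The one step that requires a little care is verifying that Theorem \ref{thm:inclusion-induces-isomorphism} genuinely applies to every arrow of \ref{eqn:resulting_continuation}, since that theorem is stated for inclusions of index pairs for the same $S$. The pairs in block \ref{eqn:continuation-filtration} are all index pairs for $S_i$, so the three arrows inside a block are fine. At the interfaces between blocks, the pair $(P_i,E_i)$ serves simultaneously as the final element of the block for $S_i$ and the initial element of the block for $S_{i+1}$; the arrow into the next block is an inclusion of $(P_i,E_i)$ into (or out of) a larger index pair for $S_{i+1}$, and $(P_i,E_i)$ is an index pair for $S_{i+1}$ by the connecting hypothesis. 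Thus Theorem \ref{thm:inclusion-induces-isomorphism} does apply, and the argument closes.
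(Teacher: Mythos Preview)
Your proposal is correct and follows essentially the same approach as the paper: the paper simply states that Theorem \ref{thm:continuation-barcode} ``follows immediately'' from Lemma \ref{lem:semi-equal_isomorphism} and Theorem \ref{thm:inclusion-induces-isomorphism}, and you have supplied exactly the details that justify that claim---verifying each arrow of the zigzag is an inclusion between index pairs for a common $S_i$, applying Theorem \ref{thm:inclusion-induces-isomorphism} arrow-by-arrow, and reading off the barcode from the resulting sequence of isomorphisms.
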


\subsection{Tracking beyond continuation}
In the previous subsection, we showed how to convert a connecting sequence of index pairs into a zigzag filtration. 
Furthermore, we observed that it produces ``full'' barcodes - they have one bar for each basis element of the Conley index that persists for the length of the filtration. 
This change of perspective allows us to generalize our protocol to handle cases when it is impossible to continue.

In particular, we consider Step \ref{protocol-step:persistence} of the protocol. Let $S$ denote an isolated invariant set under $\cV$, and $\cV'$ is an atomic coarsening of $\cV$ where the merged multivector $V$ has the property that $V \cap S \neq \emptyset$ and $V \not\subseteq S$. In such a case, we consider $A := \langle S \cup V \rangle_{\cV'}$ and take $S' = \inv_{\cV'}(A)$. Theorem \ref{thm:impossible_continuation} implies that if $S \neq \inv_{\cV}(A)$, then it is impossible to continue. However, it may be possible to compute persistence in a way that resembles continuation. Let $B := \cl(S) \cup \cl(S')$. Trivially, $B$ is closed. If $B$ is an isolating set for both $S$ and $S'$, then we say that $S$ and $S'$ are \emph{adjacent}. By Theorem \ref{thm:push_forward_is_index_pair}, $(\pf_{\cV}( \cl(S), B ), \pf_{\cV}( \mo(S) , B ))$ is an index pair for $S$ in $B$. Similarly, $(\pf_{\cV'}(\cl(S'),B), \pf_{\cV'}(\mo(S'), B ) )$ is an index pair for $S'$ in $B$. Thus, we can use Theorem \ref{thm:intersecting_index_pairs} to obtain the following zigzag filtration.
\begin{multline}
    (\cl(S),\mo(S)) \subseteq (\pf_{\cV}( \cl(S), B ), \pf_{\cV}( \mo(S) , B )) \\ \supseteq (\pf_{\cV}( \cl(S), B ) \cap \pf_{\cV'}( \cl(S'), B ), \pf_{\cV}( \mo(S) , B ) \cap \pf_{\cV'}( \mo(S') , B ) ) \subseteq \\  (\pf_{\cV'}( \cl(S'), B ), \pf_{\cV'}( \mo(S') , B )) \supseteq (\cl(S'), \mo(S'))
    \label{eqn:general-cont-filtration}
\end{multline}

Suppose that we are iteratively applying Step \ref{protocol-step:continuation} of the Tracking Protocol, finding a sequence of isolated invariant sets where adjacent ones share an index pair, and we terminate with an isolated invariant set $S$ and an index pair $(P,E)$.
We can connect $(P,E)$ with $(\cl(S),\mo(S))$  with techniques from the previous section. 
That is, if $(P,E) \neq (\cl(S),\mo(S))$, then we can find a filtration that connects them:
\begin{multline}
    (P,E) \supseteq (P \cap \pf_{\cV}( \cl(S), P), E \cap \pf_{\cV}(\mo(S),P) ) \subseteq\\ (\pf_{\cV}( \cl(S), P ), \pf_{\cV}( \mo(S), P ) \supseteq (\cl(S), \mo(S_1) )
    \label{eqn:connecting-continuation-filtration}
\end{multline}
We can then concatenate this filtration with the zigzag filtration in Equation \ref{eqn:general-cont-filtration}. 
This effectively completes the Tracking Protocol: when continuation, represented as Step \ref{protocol-step:continuation}, is impossible, we can attempt to apply Step \ref{protocol-step:persistence} and persistence to continue to track. 



In Step \ref{protocol-step:persistence}, we choose to take $S' = \inv_{\cV'}(A)$. In practice, there may be many isolated invariant sets under $\cV'$ that are adjacent to $S$. However, our choice of $S'$ is canonical.
\begin{proposition}
Let $S'$ denote an isolated invariant set under $\cV'$ that is obtained from applying Step \ref{protocol-step:persistence} of the Tracking Protocol to the isolated invariant set $S$ under $\cV$. If $S''$ is an isolated invariant set under $\cV'$ where $S \subseteq S''$, then $S' \subseteq S''$.
\end{proposition}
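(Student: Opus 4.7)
The plan is to unfold the definitions of $S'$ and $A$ from Step~\ref{protocol-step:persistence} and then exploit the minimality built into $\langle S \cup V \rangle_{\cV'}$. Recall that $S' = \inv_{\cV'}(A)$ with $A := \langle S \cup V \rangle_{\cV'}$, the intersection of all convex and $\cV'$-compatible sets containing $S \cup V$. By Proposition~\ref{prop:isolated_equiv_convex_vcomp}, the hypothesis that $S''$ is an isolated invariant set under $\cV'$ gives us both convexity and $\cV'$-compatibility of $S''$ for free. If I can show $S \cup V \subseteq S''$, the minimality of $A$ immediately yields $A \subseteq S''$, and the proof reduces to applying monotonicity of $\inv_{\cV'}$.

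The one nontrivial inclusion to verify is $V \subseteq S''$. For this I would use the standing hypothesis of Step~\ref{protocol-step:persistence} that $V \cap S \neq \emptyset$: combined with $S \subseteq S''$, this produces a simplex $\sigma \in V \cap S''$. Now invoke $\cV'$-compatibility of $S''$, i.e.\ the fact that $S''$ is a union of whole multivectors of $\cV'$. The multivector containing $\sigma$ in $\cV'$ is precisely $V$, so the entire $V$ must sit inside $S''$. Together with $S \subseteq S''$ this gives $S \cup V \subseteq S''$.

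Having $S''$ convex and $\cV'$-compatible and containing $S \cup V$, the defining property of $A = \langle S \cup V \rangle_{\cV'}$ as the intersection of \emph{all} such sets forces $A \subseteq S''$. Then monotonicity of $\inv_{\cV'}$ (obvious from its pointwise definition via essential solutions staying in the set) delivers $\inv_{\cV'}(A) \subseteq \inv_{\cV'}(S'')$. Because $S''$ is an invariant set, $\inv_{\cV'}(S'') = S''$, and the conclusion $S' \subseteq S''$ follows.

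I do not expect any serious obstacle: the whole argument is a short exercise in the definitions, with the only mild subtlety being the use of $\cV'$-compatibility (and not just $\cV$-compatibility) of $S''$ to pull the \emph{entire} merged multivector $V$ into $S''$ from a single shared simplex. This is exactly analogous to the argument in the proof of Proposition~\ref{prop:min-perturbation} for Step~\ref{protocol-step:coarsen-between}, so the proof will run in parallel to that one.
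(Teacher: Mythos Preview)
Your proposal is correct and follows essentially the same route as the paper's proof: both use that $S''$ is convex and $\cV'$-compatible (via Proposition~\ref{prop:isolated_equiv_convex_vcomp}) to conclude $A\subseteq S''$ from the minimality of $A=\langle S\cup V\rangle_{\cV'}$, and then pass to $S'=\inv_{\cV'}(A)\subseteq S''$. The only cosmetic differences are that the paper leans on the earlier observation that $A$ is already the minimal convex $\cV'$-compatible set containing $S$ (so it skips your explicit verification that $V\subseteq S''$), and it finishes with $S'=\inv_{\cV'}(A)\subseteq A\subseteq S''$ rather than invoking monotonicity of $\inv_{\cV'}$; both endings are immediate.
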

\begin{proof}
    By Proposition \ref{prop:invariant_part_of_convex_vcomp}, set $S''$ is convex and $\cV'$-compatible.
    Since $A$ is the minimal convex and $\cV'$-compatible set containing $S$ we get that $S\subseteq A\subseteq S''$. By definition, $S' = \inv_{\cV'} (A)$, so $S' \subseteq A$ $\subseteq S''$. 
\end{proof}

\subsection{Strategy for Step \ref{protocol-step:impossible} of the Tracking Protocol}
\label{sec:strategy}

We briefly consider Step \ref{protocol-step:impossible} of Tracking Protocol, which is the case when it is impossible to continue from $S$ to some $S'$, and $B = \cl(S) \cup \cl(S')$ does not isolate both $S$ and $S'$. In such a case, we do not have two index pairs in the same isolating set, so we cannot use Theorem \ref{thm:push_forward_is_index_pair} to obtain index pairs in a common $N$. 
Thus, we cannot use Theorem \ref{thm:intersecting_index_pairs}, which permits us to intersect index pairs and guarantee that the resulting pair is an index pair under a known multivector field. Recall that $A = \langle S \cup V \rangle_{\cV'}$. A natural choice is to take $S' = \inv_{\cV'}(A)$, and consider the zigzag filtration
\begin{equation}
    (\cl(S),\mo(S)) \supseteq ( \cl(S) \cap \cl(S'), \mo(S) \cap \mo(S') ) \subseteq (\cl(S'),\mo(S')).
\end{equation}
It is easy to construct examples where $( \cl(S) \cap \cl(S'), \mo(S) \cap \mo(S') )$ is not an index pair under any natural choice of multivector field (see Figure \ref{fig:intersection-not-index-pair}). However, this approach may work well in practice. We leave a thorough examination to future work.
\section{Conclusion}

We conclude with discussing briefly some directions for future work. In Step \ref{protocol-step:impossible} of the Tracking Protocol, there is a canonical choice of $S'$. But, as there is no common isolating set for $S$ and $S'$, we cannot presently say anything about the persistence of the Conley index from $S$ to $S'$. Is it possible to compute the Conley index persistence here in a controlled way? Can we meaningfully compute persistence for a different choice of invariant set?  Investigation and experiments are likely needed to determine the most practical course of action in this case. 

\section*{Acknowledgment}
This work is partially supported by NSF grants CCF-2049010, CCF-1839252, Polish National Science Center under Ma\-estro Grant 2014/14/A/ST1/00453, Opus Grant 2019/35/B/ST1/00874 and Preludium Grant 2018/29/N/ST1/00449.

\bibliographystyle{abbrv}
\bibliography{refs}

\end{document}
